\documentclass[a4paper,12pt]{scrartcl}
\usepackage{amsmath,amssymb,latexsym,amsthm,enumerate}
\usepackage{xcolor}
\usepackage{graphicx}
\usepackage{epstopdf}
\usepackage{changes}
\usepackage{bm}
\usepackage[nocompress]{cite}

\usepackage[T1]{fontenc}
\usepackage[utf8]{inputenc}
\usepackage[english]{babel}

\usepackage{hyperref}

\newcommand*{\wh}{\widehat}
\newcommand*{\wt}{\widetilde}
\newcommand*{\ol}{\overline}



\newcommand*{\PdM}{\mathcal{D}_M} 
\newcommand*{\PdW}{\mathcal{D}_W} 

\newcommand*{\bbN}{\mathbb N}

\newcommand*{\bbR}{\mathbb R}

\newcommand*{\cA}{\mathcal{A}}

\newcommand*{\cD}{\mathcal{D}}
\newcommand*{\cE}{\mathcal{E}}
\newcommand*{\cF}{\mathcal{F}}

\newcommand*{\N}{\mathbb{N}}

\newcommand*{\R}{\mathbb{R}}

\DeclareMathOperator*{\essinf}{ess\,inf}

\newcommand*{\eps}{\varepsilon}

\newcommand*{\const}{\mathrm{const}}

\newcommand*{\Leb}{\operatorname{Leb}}

\newcommand{\be}{\begin{eqnarray*}}
\newcommand{\ee}{\end{eqnarray*}}
\newcommand{\ben}{\begin{eqnarray}}
\newcommand{\een}{\end{eqnarray}}
\newcommand{\bi}{\begin{itemize}}
\newcommand{\ei}{\end{itemize}}

\newtheorem{theo}{Theorem}[section]

\newtheorem{lemma}[theo]{Lemma}
\newtheorem{propo}[theo]{Proposition}
\newtheorem{corollary}[theo]{Corollary}

\theoremstyle{definition}
\newtheorem{ex}[theo]{Example}

\newtheorem{remark}[theo]{Remark}

\title{C\`adl\`ag semimartingale strategies\\for optimal trade execution\\in stochastic order book models}

\author{Julia Ackermann\thanks{Institute of Mathematics, University of Gie{\ss}en, Arndtstr.~2, 35392 Gießen, Germany.
\emph{Email:} julia.ackermann@math.uni-giessen.de, \emph{Phone:} +49 (0)641 9932113.}
\and Thomas Kruse\thanks{Institute of Mathematics, University of Gie{\ss}en, Arndtstr.~2, 35392 Gießen, Germany.
\emph{Email:} thomas.kruse@math.uni-giessen.de, \emph{Phone:} +49 (0)641 9932102.}
\and Mikhail Urusov\thanks{Faculty of Mathematics, University of Duisburg-Essen, Thea-Leymann-Str.~9, 45127 Essen, Germany.
\emph{Email:} mikhail.urusov@uni-due.de, \emph{Phone:} +49 (0)201 1837428.
}}

\begin{document}

\maketitle

\begin{abstract}
We analyze an optimal trade execution problem in a financial market with stochastic liquidity.
To this end we set up a limit order book model in continuous time. Both order book depth and resilience are allowed to evolve randomly in time.
We allow for trading in both directions and for c\`adl\`ag semimartingales as execution strategies. 
We derive a quadratic BSDE that under appropriate assumptions characterizes minimal execution costs and identify conditions under which an optimal execution strategy exists. We also investigate qualitative aspects of optimal strategies such as, 
e.g., appearance of strategies with infinite variation or 
existence of block trades and discuss connections with the discrete-time formulation of the problem. Our findings are illustrated in several examples. 

\smallskip
\emph{Keywords:}
optimal trade execution;
continuous-time stochastic optimal control;
limit order book;
stochastic order book depth;
stochastic resilience;
quadratic BSDE;
infinite-variation execution strategy;
semimartingale execution strategy.

\smallskip
\emph{2020 MSC:}
Primary: 91G10; 93E20; 60H10.
Secondary: 60G99.
\end{abstract}

\section{Introduction}
Liquidity in financial markets is not constant but evolves randomly in time. 
To better understand the effects of stochastic liquidity on trade execution strategies, the recent research articles \cite{almgren2012optimal,
schied2013control,
ankirchner2014bsdes,
cheridito2014optimal,
ankirchner2015optimal,
graewe2015non,
kruse2016minimal,
horst2016constrained,
graewe2017optimal,
bank2018linear,
graewe2018smooth,
horst2019multi,
popier2019second,
ankirchner2020optimal}
extend the market impact models of Almgren and Chriss \cite{almgren2001optimal} and Bertsimas and Lo \cite{bertsimas1998optimal} 
by allowing for stochastic liquidity parameters. 
In this article we take a different route and introduce a variant of the limit order book models of
Alfonsi, Fruth and Schied \cite{alfonsi2010optimal,alfonsi2008constrained},
Obizhaeva and Wang \cite{obizhaeva2013optimal}
and
Predoiu, Shaikhet and Shreve \cite{predoiu2011optimal},
where both order book depth and resilience are allowed to evolve randomly in time.

To this end we fix a time horizon $T\in (0,\infty)$ and consider a filtered probability space $(\Omega, \cF,  (\cF_t)_{t \in [0,T]}, P)$ which satisfies the usual conditions
and $\cF=\cF_T$. 
Let $M=(M_t)_{t\in [0,T]}$ be a continuous local martingale.
The \textit{price impact process} $\gamma=(\gamma_t)_{t\in [0,T]}$ evolves according to the stochastic dynamics 
\begin{equation}\label{eq:dyn_gamma}
d\gamma_t = \gamma_t\left( \mu_t\, d[M]_t + \sigma_t dM_t \right),\quad t\in [0,T], \quad \gamma_0>0,
\end{equation}
with progressively measurable coefficient processes $\mu=(\mu_t)_{t\in [0,T]}$ and
$\sigma=(\sigma_t)_{t\in [0,T]}$ satisfying appropriate integrability assumptions. 

Given an open position $x\in \R$ of a certain asset at time $t\in [0,T]$, an \emph{execution strategy} is a c\`adl\`ag semimartingale $X=(X_s)_{s\in [t,T]}$ satisfying $X_{t-}=x$ and $X_T=0$. 
A positive initial position $x>0$ means the trader has to sell an amount of $|x|$ shares, whereas $x<0$ requires to buy an amount of $|x|$ shares, having the time interval $[t,T]$ at disposal for trading. 
The terminal condition $X_T=0$ describes the liquidation constraint that at time $T$ the position has to be closed. 
For every $s\in [t,T]$ the quantity $X_{s-}$ reflects the remaining position to be closed during $[s,T]$.  
A jump at time $s$
(notation: $\Delta X_s=X_s-X_{s-}$)
is interpreted as a block trade at $s$. 
In particular, execution strategies will typically have a block trade at the beginning, i.e., $X_t$ often differs from the initial position $X_{t-}=x$.
In line with the previous sentence,
we always use the convention $[X]_{t-}=0$ and $[X]_t=(\Delta X_t)^2$
concerning the quadratic variation $[X]$ of the process $X=(X_s)_{s\in [t,T]}$
 at the initial time,
i.e.,
the quadratic variation process $[X]$ will typically jump at the initial time as well.\footnote{\label{ft:01042021a1}We stress that the conventions about possible jumps at the initial time apply also in the case when the initial time $t=0$, i.e.,
$X_0$ can differ from $X_{0-}=x$ and, consequently,
$[X]_0$ ($=(\Delta X_0)^2$) can differ from $[X]_{0-}$ ($=0$).}

Market illiquidity implies that trading according to a strategy $X$ impacts the asset price and induces a price deviation. 
We thus assume that the actual price of a share equals the unaffected price, which is the price of a share in absence of trading, plus a deviation that depends on the execution strategy. In our analysis, we focus on the price deviation 
(see, however, Remark~\ref{rem:01042021a1} for more detail) 
and model it by associating to each strategy $X$ a \textit{deviation process} $D=(D_s)_{s\in [t,T]}$ which evolves according to 
\begin{equation}\label{eq:deviation_dyn}
dD_s=-\rho_s D_{s} \,d[M]_s +\gamma_s\, dX_s+d[\gamma,X]_s, \quad s\in [t,T], \quad D_{t-}=d,
\end{equation}
where the progressively measurable coefficient process $\rho=(\rho_s)_{s\in [0,T]}$ is called the \textit{resilience process}. 
Notice that $D$ can have a jump at the initial time $t$, which corresponds to a jump of $X$, hence the initial condition on $D_{t-}$. 
Typically, the initial price deviation $d\in \R$ is assumed to be zero, but we allow for arbitrary values in our formulation and analysis.

Given an initial price deviation $d\in \R$ we denote by $\cA_t(x,d)$ the set of all c\`adl\`ag semimartingale execution strategies for closing an initial position $x\in \R$ during $[t,T]$ 
which in addition satisfy suitable integrability conditions (see (A1)--(A3) below). 
The expected execution costs for a strategy $X \in \cA_t(x,d)$
are given by
\footnote{\label{ft:01042021a2}
We follow the convention that,
for integrals of the forms $\int_{[t,T]} \ldots\,dX_s$ and $\int_{[t,T]}\ldots\,d[X]_s$,
jumps of the c\`adl\`ag integrators $X$ and $[X]$ at time $t$ contribute to the integrals.
In contrast, we write $\int_{(t,T]} \ldots\,dX_s$ and $\int_{(t,T]}\ldots\,d[X]_s$ when we do not include the jumps at time $t$ into the integrals.
In particular, a possible initial block trade at time $t$ contributes to both integrals in the cost functional $J$ in~\eqref{eq:cost_fct}.}
\begin{equation}\label{eq:cost_fct}
J_t(x,d,X)=E_t\left[\int_{[t,T]}D_{s-}dX_s+\int_{[t,T]}\frac{\gamma_s}{2}d[X]_s\right],
\end{equation}
where $E_t[\cdot]$ is a shorthand notation for
$E[\cdot|\cF_t]$. 
In this article we provide a purely probabilistic solution to the stochastic control problem of minimizing $J$ over $\cA_t(x,d)$. More precisely, under appropriate assumptions we characterize the value function
\begin{equation}\label{eq:def_value_fct}
V_t(x,d)=\essinf_{X \in \cA_t(x,d)}J_t(x,d,X), \quad x,d \in \R,\; t\in [0,T],
\end{equation}
of the control problem in terms of a quadratic backward stochastic differential equation (BSDE). Let $Y=(Y_s)_{s\in [0,T]}$ denote the first solution component of BSDE~\eqref{eq:bsde}, 
then we show in Theorem~\ref{thm:sol_val_fct} that the minimal expected costs amount to
\begin{equation*}
	V_t(x,d)= \frac{Y_{t}}{\gamma_t}\left(d-\gamma_t x\right)^2-\frac{d^2}{2\gamma_t} .
\end{equation*}
Moreover, we identify conditions under which an optimal strategy exists and give an explicit representation in terms of the solution of the BSDE.

Observe that the class of execution strategies $\cA_t(x,d)$ over which we optimize in \eqref{eq:def_value_fct} is a subclass of all c\`adl\`ag semimartingales.
In particular, the strategies are allowed to have infinite variation.
Only few research articles on optimal trade execution so far analyze problems where strategies are not restricted to have finite variation.
Infinite variation strategies are considered in \cite{lorenz2013drift}
in a setting related to ours
in order to investigate how optimal execution strategies react to a possible drift in the unaffected price process. 
The work \cite{becherer2019stability} explains how to go beyond finite variation strategies in another model for gains of a large investor.
Strategies of infinite variation also appear in \cite{carmona2019selffinancing} and \cite{garleanu2016dynamic}.
These articles study a generalization of the self-financing equation and an infinite horizon portfolio optimization problem under market frictions, respectively.
The recent work \cite{HorstKivman2021} considers a model with instantaneous price impact and stochastic resilience from \cite{graewe2017optimal} and \cite{horst2019multi}, and strategies of infinite variation emerge there in the limiting case of vanishing instantaneous price impact.

By allowing strategies of infinite variation we encounter several interesting effects, which we now discuss in more detail. In particular, this extension of the set of admissible controls requires an adjustment of conventional dynamics of the price deviation process and the cost functional as presented, e.g., in
\cite{alfonsi2014optimal},
\cite{bank2014optimal},
\cite{fruth2014optimal},
\cite{fruth2019optimal},
\cite{obizhaeva2013optimal},
\cite{predoiu2011optimal}
(see also the references therein).
In these papers, the trading is either constrained in one direction
or the execution strategies $X$ are assumed to be of finite variation,
which de facto translates into the dynamics
\begin{equation}\label{eq:13022020a1_intro}
d\wt D_s=-\rho_s\wt D_{s}\,d[M]_s+\gamma_s\,dX_s 
\end{equation}
for the deviation process and into the cost functional of the form
\begin{equation}\label{eq:13022020a2_intro}
\wt J_t(x,d,X)=E_t\left[\int_{[t,T]}\left(\wt D_{s-}+\frac{\gamma_s}{2}\Delta X_s\right)\,dX_s\right].
\end{equation}
Tildes in \eqref{eq:13022020a1_intro} and~\eqref{eq:13022020a2_intro} are to distinguish these from our setting. 
Let us recall that, for two c\`adl\`ag semimartingales $K=(K_s)_{s\in[t,T]}$ and $L=(L_s)_{s\in[t,T]}$, it holds for all $s\in[t,T]$ that
$[K,L]_s=\langle K^c,L^c\rangle_s+\sum_{u\in[t,s]}\Delta K_u\Delta L_u$
(see Theorem~I.4.52 in \cite{jacodshiryaev}),
where $K^c$ and $L^c$ denote the \emph{continuous martingale parts} of $K$ and $L$
(see Proposition~I.4.27 in \cite{jacodshiryaev}). 
In particular, $[X]_s=\langle X^c\rangle_s+\sum_{u\in[t,s]}(\Delta X_u)^2$ and, as $\gamma$ is continuous, $[\gamma,X]_s=\langle\gamma,X^c\rangle_s$, $s\in[t,T]$.
Therefore, if in our setting an execution strategy $X$ is monotone or, more generally, of finite variation,
then \eqref{eq:deviation_dyn} 
reduces to~\eqref{eq:13022020a1_intro},
while \eqref{eq:cost_fct} 
reduces to~\eqref{eq:13022020a2_intro}
(as $X^c=0$ in that case). 
In general, i.e., when $X$ is a c\`adl\`ag semimartingale, we have additional terms in the dynamics of the deviation process~\eqref{eq:deviation_dyn}
and in the cost functional~\eqref{eq:cost_fct} in comparison with the conventional setting of the problem.
To explain these additional terms, we now make the following comments:
\begin{itemize}
\item
Preserving the conventional dynamics~\eqref{eq:13022020a1_intro} and the cost functional~\eqref{eq:13022020a2_intro}
can result in an ill-posed optimization problem in our setting, 
see counterexamples in Sections \ref{sec:fromsec81offirstversion} and~\ref{sec:fromsec52offirstversion}, respectively.
\item
Specifically, using cost functional~\eqref{eq:13022020a2_intro} for strategies $X$ of infinite variation
can lead to arbitrarily big negative costs even with constant in time deterministic price impact $\gamma$
(in which case~\eqref{eq:deviation_dyn} 
and~\eqref{eq:13022020a1_intro} are the same),
see Section~\ref{sec:fromsec52offirstversion}.
With the right cost functional~\eqref{eq:cost_fct} 
we recover a well-posed problem,
see Section~\ref{sec:recoveringOW}.
\item
Furthermore, even with the right cost functional~\eqref{eq:cost_fct},  
the dynamics~\eqref{eq:13022020a1_intro}
for the deviation process can lead to arbitrarily big negative costs,
see Section~\ref{sec:fromsec81offirstversion}.
With the right dynamics~\eqref{eq:deviation_dyn} 
we again recover a well-posed problem,
see Section~\ref{sec:examplewithLambertWfct}.
\item
It is worth noting that the specific form of the corrections to \eqref{eq:13022020a1_intro} and~\eqref{eq:13022020a2_intro}
when allowing strategies $X$ to have infinite variation
can be justified by a limiting procedure from the discrete-time situation,
see Appendix~\ref{sec:formalderivationdeviationcostfct}.
\end{itemize}

The preceding discussion raises the question of whether it is really so necessary to try to include strategies $X$ of infinite variation into the picture.
The answer is affirmative: 
as we do not constrain the trading in one direction and allow for stochastic price impact and resilience processes
$\gamma$ and $\rho$, which, moreover, can have infinite variation, it is quite natural to expect that strategies of infinite variation come out.
The intuition is that the optimal strategy should react to changes in the exogenously given processes $\gamma$ and $\rho$,
and if $\gamma$ (or $\rho$) has infinite variation, then the optimal strategy should typically have infinite variation as well. 
But we actually discover a more surprizing effect:
even in the situation when the resilience $\rho$ is a deterministic constant 
and the price impact process $\gamma$ has $C^\infty$ paths 
(in particular, all exogenously given processes have finite variation), 
it can happen that the optimal strategy in our problem~\eqref{eq:def_value_fct}  
has infinite variation.
This and several other interesting qualitative effects are presented in Section~\ref{sec:furtherexamples}. 
We moreover
remark that we study a discrete-time version of the problem in
\cite{ackermann2020optimal},
but that study is concentrated on different questions, as the mentioned effects, being purely continuous-time features, cannot be discussed in the framework of \cite{ackermann2020optimal}.

From another perspective, we would like to mention \cite{carmona2019selffinancing}, who examine high-frequency trading in limit order books in general (not necessarily related with optimal trade execution).
It is very interesting that one of their conclusions is the empirical evidence for the infinite variation nature of trading strategies of high-frequency traders.

Finally, to complement the above discussion about the necessity of some adjustments in our setting, 
we discuss related literature from the viewpoint of the adjustments \eqref{eq:13022020a1_intro}$\to$\eqref{eq:deviation_dyn} and \eqref{eq:13022020a2_intro}$\to$\eqref{eq:cost_fct}:
\begin{itemize}
\item
Adjustments in the cost functional similar to \eqref{eq:13022020a2_intro}$\to$\eqref{eq:cost_fct} already appeared in related settings in \cite{lorenz2013drift} and \cite{garleanu2016dynamic}.
In both papers the terms in the cost functional containing the quadratic variation $[X]$ are justified via limiting arguments from discrete time.
Further, \cite{HorstKivman2021} proves that the limiting strategy in the case of vanishing instantaneous price impact (cf.\ the discussion above) can be viewed as the optimal strategy in a problem of optimal execution with semimartingale strategies, where the cost functional is in the spirit of~\eqref{eq:cost_fct}.
\item
On the contrary, the adjustment \eqref{eq:13022020a1_intro}$\to$\eqref{eq:deviation_dyn} in the dynamics of the deviation process is to the best of our knowledge new. It does not emerge in the aforementioned papers because they consider constant $\gamma$, in which case $[\gamma,X]\equiv0$. In order to see the need for the adjustment \eqref{eq:13022020a1_intro}$\to$\eqref{eq:deviation_dyn}, it is necessary to consider the price impact itself (i.e., the process $\gamma$) to be of infinite variation.
\end{itemize}

The remainder of this article is organized as follows. In Section~\ref{sec:problem_formulation} we formally introduce the trade execution problem as a stochastic optimization problem over semimartingales. 
Section~\ref{sec:main_results} presents the main results of this article. In Section~\ref{sec:reason_cost_fct} resp.\ Section~\ref{sec:reason_dynamics_of_deviation} we explain why the cost functional in~\eqref{eq:13022020a2_intro} has to be adjusted
to~\eqref{eq:cost_fct} 
resp.\ why the dynamics in~\eqref{eq:13022020a1_intro} have to be adjusted 
to~\eqref{eq:deviation_dyn}  
to obtain a well-posed optimization problem when minimizing over semimartingales.
We present several examples and describe qualitative effects of optimal strategies in Section~\ref{sec:furtherexamples}.
Section~\ref{sec:analysis_of_BSDE} establishes existence results for BSDE~\eqref{eq:bsde}. Section~\ref{sec:proofs_of_main_results} is devoted to the proofs of the results in Section~\ref{sec:main_results}.
Appendices \ref{sec:formalderivationdeviationcostfct}
and~\ref{sec:formalderivationbsde} 
additionally 
justify the form of the cost functional in~\eqref{eq:cost_fct}, 
the dynamics in~\eqref{eq:deviation_dyn} 
and BSDE~\eqref{eq:bsde} by deriving these objects as
continuous-time limits from the corresponding
discrete-time problem formulation.
Appendix~\ref{sec:comp_bsde} contains a certain comparison argument for BSDEs, which is used in Section~\ref{sec:analysis_of_BSDE}.

\section{Problem formulation}\label{sec:problem_formulation}

Throughout we consider a continuous local martingale
$M=(M_t)_{t\in [0,T]}$ 
and denote by $\PdM$  the Dol{\'e}ans measure associated to $M$
on 
$\left( \Omega\times[0,T] , \cF \otimes \mathcal{B}\left([0,T]\right) \right)$, i.e., 
$\PdM (C)= E\left[\int_0^T 1_C(\cdot,s) d[M]_s\right]$ 
for $C\in \cF \otimes \mathcal{B}\left([0,T]\right)$.

To model illiquidity, we need to specify three progressively measurable processes
$\mu=(\mu_s)_{s\in [0,T]}$,
$\sigma=(\sigma_s)_{s\in [0,T]}$
and $\rho=(\rho_s)_{s\in[0,T]}$
such that
$\int_0^T(|\rho_s|+|\mu_s|+\sigma^2_s)\,d[M]_s<\infty$ a.s. 
The first two inputs $\mu$ and $\sigma$ define the
\emph{price impact process}
$\gamma=(\gamma_s)_{s\in[0,T]}$,
which is a positive continuous adapted process 
satisfying~\eqref{eq:dyn_gamma}, i.e., 
$$
\gamma_s=\gamma_0\exp\left\{\int_0^s\left(\mu_u-\frac{\sigma^2_u}2\right)\,d[M]_u
+\int_0^s \sigma_u\,dM_u\right\},\quad s\in[0,T],
$$
where $\gamma_0>0$ is a positive $\cF_0$-measurable random variable.
It turns out to be useful also to introduce the process
$\alpha_s=\frac{1}{\gamma_s}$, $s\in [0,T]$,
which then satisfies
\begin{equation}\label{eq:dyn_alpha}
d\alpha_s = \alpha_s\left( -(\mu_s-\sigma_s^2)\,d[M]_s - \sigma_s\,dM_s \right),\quad s\in [0,T].
\end{equation}
The third input $\rho=(\rho_s)_{s\in [0,T]}$ above
is called the \emph{resilience process}
and is, together with the price impact process $\gamma$,
involved in the following dynamics.

Given $x\in\bbR$ and $t\in[0,T]$,
for any $d\in \R$ and any execution strategy $X=(X_s)_{s\in[t,T]}$
with initial position $X_{t-}=x$ (cf.\ the introduction), 
we define the \emph{deviation process}
$D=(D_s)_{s\in [t,T]}$ associated to $X$ as the unique solution 
of~\eqref{eq:deviation_dyn}, i.e., 
\begin{equation}\label{eq:deviation_def}
\begin{split}
D_s &= e^{-\int_t^s \rho_u d[M]_u} \left( d+\int_{[t,s]} e^{\int_t^r \rho_u d[M]_u} \gamma_r dX_r + \int_{[t,s]} e^{\int_t^r \rho_u d[M]_u} d[\gamma,X]_r \right), \quad s\in[t,T], \\
D_{t-} &= d.
\end{split}
\end{equation} 
For each $t\in [0,T]$, we formulate the conditions 
\begin{enumerate}
\item[(A1)] \qquad $E_t\left[ \sup_{s \in [t,T]} \left( \gamma_s^2 (X_s - \alpha_s D_s)^4 \right) \right] < \infty$ a.s.,
\item[(A2)] \qquad $E_t\left[  \left( \int_t^T \gamma_s^2 (X_s - \alpha_s D_s )^4 \sigma_s^2 d[M]_s \right)^{\frac12}  \right] < \infty$ a.s.,
\item[(A3)] \qquad $E_t\left[ \left( \int_t^T D_{s}^4 \alpha_s^2 \sigma_s^2 d[M]_s \right)^{\frac12} \right] < \infty$ a.s.,
\end{enumerate}
where $E_t[\cdot]$ is a shorthand notation for $E[\cdot|\cF_t]$.
Note that if $E_t\left[ \int_t^T \sigma_s^2 d[M]_s \right] < \infty$ a.s., then,
by the Cauchy-Schwarz inequality, (A2) follows from~(A1). 

For $x,d\in \R$ and $t\in [0,T]$, let $\cA_t(x,d)$ be the set of all
c\`adl\`ag semimartingales $X=(X_s)_{s\in [t,T]}$ with $X_{t-}=x$, $X_T=0$
(i.e., execution strategies)
satisfying conditions (A1), (A2) and~(A3).

For every $x,d \in \R$, $t\in [0,T]$ and $X \in \cA_t(x,d)$, 
we define 
the cost functional $J$ by~\eqref{eq:cost_fct}. 
Conditions under which the cost functional is well-defined
for all $x,d \in \R$, $t\in [0,T]$ and $X \in \cA_t(x,d)$
are provided in Theorem~\ref{thm:quadratic_exp_J} below. 
The control problem considered in this paper is
to minimize the cost functional over
the execution strategies $X\in\cA_t(x,d)$. 
The value function $V$ of the control problem is given by~\eqref{eq:def_value_fct}. 
If, for $x,d \in \R$, $t\in [0,T]$, there exists an execution strategy $X^*=(X^*_s)_{s\in [t,T]} \in \cA_t(x,d)$ such that $V_t(x,d)=J_t(x,d,X^*)$, we call this process $X^*$ an optimal execution strategy.

\begin{remark}
(i) An important special case of our setting is the situation where the continuous local martingale $M$ is an $(\cF_s)$-Brownian motion $W=(W_s)_{s\in[0,T]}$,
in which case we have $d[M]_s=ds$ and $\cD_W=P\times\Leb$ ($\Leb$ denotes the Lebesgue measure).

\smallskip
(ii) More generally, let $M$ be a continuous local martingale satisfying $ds \ll d[M]_s$ a.s.
Consider the situation when the dynamics of the price impact process $\gamma$ is given by
$d\gamma_s=\gamma_s\left( \wt{\mu}_s ds + \sigma_s dM_s \right)$, $s\in[0,T]$,
and, for $x,d \in \mathbb{R}$, $t\in[0,T]$ and $X\in\mathcal{A}_t(x,d)$,
the dynamics of the deviation process $D$ is specified by
$dD_s=-\wt{\rho}_s D_{s-} ds +\gamma_s dX_s+d[\gamma,X]_s$, $s\in[t,T]$, $D_{t-}=d$.
Then, the identifications
$\wt{\mu}\lambda=\mu$ and $\wt{\rho}\lambda=\rho$
with $\lambda_s = \frac{ds}{d[M]_s}$, $s\in[0,T]$,
reduce this situation to our formulation
(cf.\ \eqref{eq:dyn_gamma} and~\eqref{eq:deviation_dyn}).
\end{remark}

\begin{remark}\label{rem:01042021a1}
In the problem setting introduced above we focused on the price deviation only.
However, the considerations above also allow to explicitly include an unaffected price into the picture, provided that the unaffected price is a (local) martingale.

To this end, assume that the unaffected price is modelled via a c\`adl\`ag local martingale $S^0=(S^0_u)_{u\in[0,T]}$.
Fix an initial time instance $t\in[0,T]$, initial position $x\in\bbR$ and initial deviation $d\in\bbR$.
Consider an execution strategy $X=(X_u)_{u\in[t,T]}$, i.e., a c\`adl\`ag semimartingale satisfying $X_{t-}=x$ and $X_T=0$.
When we take the unaffected price into account, the execution costs generated by $X$ over $[t,T]$ are given by the formula
\begin{equation}\label{eq:02042021a1}
\int_{[t,T]}
S^0_{u-}\,dX_u
+\int_{[t,T]}
d[S^0,X]_u
+\int_{[t,T]}
D_{u-}\,dX_u
+\int_{[t,T]}
\frac{\gamma_u}2\,d[X]_u.
\end{equation}
The third and the fourth terms were extensively discussed above.
The first and the second cost terms in~\eqref{eq:02042021a1} are due to the unaffected price process $S^0$.
It was first observed in \cite{lorenz2013drift} via a limiting argument from discrete time that, in continuous time and for semimartingale strategies, the right expression for the cost terms due to the unaffected price is
\begin{equation}\label{eq:02042021a2}
\int_{[t,T]}
S^0_{u-}\,dX_u
+\int_{[t,T]}
d[S^0,X]_u
\end{equation}
(see Lemma~2.5 in \cite{lorenz2013drift}).\footnote{We notice that in the literature preceding \cite{lorenz2013drift} the execution strategies $X$ were always assumed to be of finite variation (often just monotone), while the part of execution costs coming from the unaffected price was given by the expression $\int_{[t,T]}S^0_u\,dX_u$. This is consistent with~\eqref{eq:02042021a2}, as
$
\int_{[t,T]}
S^0_{u-}\,dX_u
+\int_{[t,T]}
d[S^0,X]_u
=\int_{[t,T]}S^0_u\,dX_u
$
whenever $X$ is of finite variation
(see Proposition~I.4.49a in \cite{jacodshiryaev}).}
Using integration by parts for the semimartingales $X$ and $S^0$ together with $X_{t-}=x$ and $X_T=0$ we obtain that the expression in~\eqref{eq:02042021a2} equals\footnote{In the case $t=0$, the term $S^0_{0-}$ appearing in intermediate calculations in~\eqref{eq:02042021a3} can be an arbitrary $\cF_0$-measurable random variable (i.e, we can allow a jump at time $0$ in $S^0$ like we allow initial jumps in~$X$). Interestingly, we do not need any specific relation between $S^0_0$ and $S^0_{0-}$, as the term containing $S^0_{t-}$ in~\eqref{eq:02042021a3} is ultimately reduced with the initial jump in the stochastic integral.}
\begin{equation}\label{eq:02042021a3}
\begin{split}
X_TS^0_T-X_{t-}S^0_{t-}-\int_{[t,T]}X_{u-}\,dS^0_u
&=-xS^0_{t-}-\int_{[t,T]}X_{u-}\,dS^0_u\\
&=-xS^0_t-\int_{(t,T]}X_{u-}\,dS^0_u.
\end{split}
\end{equation}
It follows from the Burkholder-Davis-Gundy inequality that
$
E_t\left[\int_{(t,T]}X_{u-}\,dS^0_u\right]=0
$
whenever the condition
\begin{enumerate}
\item[(A4)] \qquad $E_t\left[ \left( \int_{(t,T]} X_{u-}^2\,d[S^0]_u \right)^{\frac12} \right] < \infty$ a.s.
\end{enumerate}
is satisfied. In particular, under~(A4), the expected (at time~$t$) costs due to the unaffected price are equal to $-xS^0_t$ and thus do not depend on the execution strategy, hence the minimization of the expected (at time~$t$) total costs in~\eqref{eq:02042021a1} reduces to the minimization of $J_t(x,d,X)$.

We summarize the discussion as follows.
In the paper we minimize $J_t(x,d,X)$ over strategies $X$ satisfying (A1)--(A3) (i.e., over $X\in\cA_t(x,d)$).
Given a local martingale unaffected price $S^0$,
a pertinent optimization problem is to minimize $J_t(x,d,X)$ over strategies $X$ satisfying (A1)--(A4).
Given an optimal strategy $X^*\in\cA_t(x,d)$ one thus needs additionally to examine whether $X^*$ satisfies~(A4).
Observe that (A4) need not be satisfied automatically, as it depends on the additional input $S^0$, which may have nothing to do with our other inputs (namely, the processes $M$, $\rho$, $\mu$ and~$\sigma$).
However, it is worth noting that, if $S^0$ is a square integrable martingale,
then, under the assumptions of Theorem~\ref{thm:sol_val_fct} below, the optimal strategy $X^*\in\cA_t(x,d)$ provided in~\eqref{eq:opt_strat_representation} satisfies~(A4).\footnote{Indeed, we will see in the proof that, under the assumptions of Theorem~\ref{thm:sol_val_fct}, for $X^*$ of~\eqref{eq:opt_strat_representation},
it holds that $E_t\left[\sup_{u\in[t,T]}(X^*_{u-})^2\right]<\infty$ a.s.
As $S^0$ is a square integrable martingale, we have $E\left([S^0]_T\right)<\infty$, hence
$E_t\left([S^0]_T-[S^0]_t\right)<\infty$ a.s.
Condition~(A4) for $X^*$ of~\eqref{eq:opt_strat_representation}
now follows from the Cauchy-Schwarz inequality.}
\end{remark}

\section{Main results}\label{sec:main_results}

We now present the main results,
which include an alternative representation of the cost functional,
a representation of the value function in terms of solutions to a certain BSDE,
a characterization of existence of an optimal strategy
and an explicit expression for the optimal strategy (when it exists).
The proofs are deferred to Section~\ref{sec:proofs_of_main_results}.

We often make use of the following assumption: 
\begin{enumerate}
\item[$\bm{\left(C_{>0}\right)}$]  \qquad $2\rho + \mu - \sigma^2>0$ $\PdM$-a.e. 
\end{enumerate}
In the case when $\bm{\left(C_{>0}\right)}$ is satisfied, we introduce the BSDE 
\begin{equation}\label{eq:bsde}
\begin{split}
Y_t & = \frac12 + \int_t^T f(s,Y_s,Z_s) d[M]_s - \int_t^T Z_s dM_s - \left( M^\perp_T - M^\perp_t\right), \quad t\in[0,T], 
\end{split}
\end{equation}
with the driver 
\begin{equation}\label{eq:driver_of_bsde}
f(s,Y_s,Z_s) = - \frac{\left( (\rho_s + \mu_s) Y_s + \sigma_s Z_s \right)^2}{ \sigma_s^2 Y_s + \frac12 (2\rho_s + \mu_s -\sigma_s^2) } 
+ \mu_s Y_s + \sigma_s Z_s
\end{equation}
and terminal condition $\frac12$. 
A solution of BSDE~\eqref{eq:bsde} is a triple $(Y,Z,M^\perp)$ of processes where 
\begin{itemize}
	\item $M^\perp$ is a c\`adl\`ag local martingale with $M^\perp_0 = 0$ and $[M^\perp,M]=0$,
	\item $Z$ is a progressively measurable process such that $\int_0^T Z_s^2 d[M]_s < \infty$ a.s.,
	\item $Y$ is an adapted c\`adl\`ag process, 
\end{itemize}	
such that \eqref{eq:bsde} is satisfied a.s. 
Notice that $Y$ is necessarily a \emph{special} semimartingale (see Section~I.4c in \cite{jacodshiryaev}).
We now introduce the assumption 
\begin{enumerate}
	\item[$\bm{\left(C_{\textbf{BSDE}}\right)}$]  
	There exists a solution $(Y,Z,M^\perp)$ of BSDE~\eqref{eq:bsde} such that $Y$ is $[0,1/2]$-valued, $E\left[ [M^\perp]_T \right] < \infty$  and $E\left[ \int_0^T Z_s^2 d[M]_s \right] < \infty$.
\end{enumerate}
To explain the role of condition $\bm{\left(C_{>0}\right)}$ for BSDE~\eqref{eq:bsde} and in $\bm{\left(C_{\textbf{BSDE}}\right)}$ it is worth noting that, under $\bm{\left(C_{>0}\right)}$, the denominator in the first term in \eqref{eq:driver_of_bsde} stays strictly positive whenever $Y$ is nonnegative. 
Furthermore, we make the following comments: 
\begin{itemize}
	\item In many situations below (Proposition~\ref{propo:withoutresiliencecloseimmediately},  Sections~\ref{sec:recoveringOW}, \ref{sec:examplewithLambertWfct} and \ref{sec:furtherexamples}) $\bm{\left(C_{\textbf{BSDE}}\right)}$ is satisfied.
	\item Two broad subsettings of our general setting where $\bm{\left(C_{\textbf{BSDE}}\right)}$ is satisfied are described in Section~\ref{sec:analysis_of_BSDE}. 
	\item In our general setting $\bm{\left(C_{\textbf{BSDE}}\right)}$ is motivated by the discrete-time version of the stochastic control problem~\eqref{eq:def_value_fct} (see Appendix~\ref{sec:formalderivationbsde}). 
\end{itemize}

In Remark~\ref{rem:interpretation_Y} below we present an interpretation of the solution component $Y$ of~\eqref{eq:bsde} as a saving factor describing the benefits of using an optimal execution strategy compared to an immediate position closure.

If $\bm{\left(C_{>0}\right)}$ and $\bm{\left(C_{\textbf{BSDE}}\right)}$ hold we define the process $\wt{\beta} = (\wt{\beta}_s)_{s\in [0,T]}$ 
pertaining to $(Y,Z)$ from $\bm{\left(C_{\textbf{BSDE}}\right)}$ 
by 
\begin{equation}\label{eq:def_beta_tilde}
\wt{\beta}_s = \frac{(\rho_s+\mu_s)Y_s+\sigma_s Z_s}{\sigma_s^2 Y_s + \frac12 (2\rho_s + \mu_s - \sigma_s^2 )}, \quad s\in [0,T].
\end{equation}

\begin{theo}\label{thm:quadratic_exp_J}
	Let $\bm{\left(C_{>0}\right)}$ and $\bm{\left(C_{\textbf{BSDE}}\right)}$ be satisfied. For all $x,d\in \R$, $t\in [0,T]$ and $X\in \cA_t(x,d)$ it holds that the cost functional \eqref{eq:cost_fct} is well-defined and admits the representation 
	\begin{equation}\label{eq:quad_expansion_J}
	\begin{split}
	&J_t(x,d,X)=\frac{Y_{t}}{\gamma_t}\left(d-\gamma_t x\right)^2-\frac{d^2}{2\gamma_t}\\
	&\quad+E_t\left[\int_t^T  \frac{1}{\gamma_s} 
	\left( \wt{\beta}_s (\gamma_s X_s - D_s ) + D_s  \right)^2
	\left( \sigma_s^2 Y_s + \frac12 (2\rho_s + \mu_s - \sigma_s^2 ) \right)
	d[M]_s \right]\;\;\text{a.s.}
	\end{split}
	\end{equation}
	In particular, for all $x,d\in \R$ and $t\in [0,T]$ it holds 
	\begin{equation}\label{eq:lower_bound_for_value_fct}
	V_t(x,d)\geq \frac{Y_{t}}{\gamma_t}\left(d-\gamma_t x\right)^2-\frac{d^2}{2\gamma_t}\;\;\text{a.s.}
	\end{equation}
\end{theo}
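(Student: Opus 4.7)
The plan is to apply Itô's formula to the candidate value process
\begin{equation*}
\Phi_s := \frac{Y_s}{\gamma_s}(D_s - \gamma_s X_s)^2 - \frac{D_s^2}{2\gamma_s}, \qquad s \in [t,T],
\end{equation*}
and to match its drift against the cost integrand using the specific structure of the driver~\eqref{eq:driver_of_bsde}. Setting $U_s := D_s - \gamma_s X_s$ and $\alpha_s := 1/\gamma_s$, one may rewrite $\Phi_s = Y_s\alpha_s U_s^2 - \tfrac{1}{2}\alpha_s D_s^2$. A crucial preliminary observation is that $U$ is continuous on $[t,T]$: integration by parts together with the continuity of $\gamma$ and the identity $\Delta D_s = \gamma_s \Delta X_s$ imply $\Delta U_s = 0$, and an explicit computation gives
\begin{equation*}
dU_s = -(\rho_s D_s + \mu_s \gamma_s X_{s-})\,d[M]_s - \sigma_s \gamma_s X_{s-}\,dM_s.
\end{equation*}
In particular $U_t = U_{t-} = d - \gamma_t x$, whence $\Phi_t = \tfrac{Y_t}{\gamma_t}(d - \gamma_t x)^2 - \tfrac{D_t^2}{2\gamma_t}$, and $\Phi_T = 0$ because $X_T = 0$ and $Y_T = \tfrac{1}{2}$.

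The heart of the argument is to establish the pathwise identity
\begin{equation*}
d\Phi_s + D_{s-}\,dX_s + \tfrac{\gamma_s}{2}\,d[X]_s = \tfrac{1}{\gamma_s}\bigl(\wt{\beta}_s(\gamma_s X_s - D_s) + D_s\bigr)^2\bigl(\sigma_s^2 Y_s + \tfrac{1}{2}(2\rho_s + \mu_s - \sigma_s^2)\bigr)\,d[M]_s + dL_s
\end{equation*}
on $(t,T]$, for some local martingale $L$. This is obtained by expanding $d\Phi_s$ via Itô's formula, substituting the dynamics of $Y$ from~\eqref{eq:bsde}, of $\alpha$ from~\eqref{eq:dyn_alpha}, and of $U$ and $D$, and carefully tracking the cross-variations (in particular $[\alpha,U^2]$ and $[\alpha,D^2]$ contribute $d[M]_s$-terms via the continuous martingale part of $X$). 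The driver~\eqref{eq:driver_of_bsde} and the definition~\eqref{eq:def_beta_tilde} of $\wt{\beta}$ are engineered precisely so that, after completing the square in $D_s$ relative to $U_s$, the $d[M]_s$-drift on the left collapses to the quadratic form on the right. At the initial time one further exploits the elementary jump identity $D_{t-}\Delta X_t + \tfrac{\gamma_t}{2}(\Delta X_t)^2 = \tfrac{D_t^2 - d^2}{2\gamma_t}$, which is exactly the discrepancy between $\Phi_t$ and $\tfrac{Y_t}{\gamma_t}(d-\gamma_t x)^2 - \tfrac{d^2}{2\gamma_t}$. Integrating over $(t,T]$, using $\Phi_T = 0$, and taking $E_t[\cdot]$ then yields~\eqref{eq:quad_expansion_J}.

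The main technical obstacle is to justify that $E_t[L_T - L_t] = 0$ and that all the integrals appearing above (including the cost functional~\eqref{eq:cost_fct}) are genuinely well-defined. The standard route is to localise $L$ by stopping times and then invoke the Burkholder-Davis-Gundy inequality; the required moment bounds come from the a.s.\ bound $Y \in [0,1/2]$ together with the $L^2$-controls on $Z$ and on $[M^\perp]_T$ from $\bm{\left(C_{\textbf{BSDE}}\right)}$, combined with (A1)--(A3) to handle the factors involving $X$ and $D$. Once~\eqref{eq:quad_expansion_J} is established, the lower bound~\eqref{eq:lower_bound_for_value_fct} follows immediately: under $\bm{\left(C_{>0}\right)}$ and $Y \ge 0$, the weight $\sigma_s^2 Y_s + \tfrac{1}{2}(2\rho_s + \mu_s - \sigma_s^2)$ is nonnegative $\PdM$-a.e.\ and the squared factor is nonnegative, so the conditional expectation term in~\eqref{eq:quad_expansion_J} is nonnegative; passing to the essential infimum over $X\in\cA_t(x,d)$ gives~\eqref{eq:lower_bound_for_value_fct}.
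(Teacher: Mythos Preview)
Your proposal is correct and follows essentially the same route as the paper's proof: the paper also expands the process $\gamma_s Y_s(X_s-\alpha_s D_s)^2-\tfrac{1}{2}\alpha_s D_s^2$ (computing the two summands separately rather than bundled into your $\Phi$), identifies the $d[M]$-drift as the completed square involving $\wt\beta$, and kills the residual local martingale terms via BDG combined with (A1)--(A3) and the $L^2$-bounds on $Z$ and $[M^\perp]_T$. The only minor organizational difference is that the paper integrates up to $T-$ and invokes Lemma~\ref{lem:Y_Tminus_equals_Y_T} ($Y_{T-}=\tfrac12$) to attach the final block trade, whereas you integrate to $T$ and use $\Phi_T=0$ directly; both work.
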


\begin{remark}
	Note that $\bm{\left(C_{\textbf{BSDE}}\right)}$ does only postulate existence but not uniqueness of a solution triple $(Y,Z,M^\perp)$ of  BSDE~\eqref{eq:bsde}. By assuming $\bm{\left(C_{\textbf{BSDE}}\right)}$ we always mean that we fix some solution triple $(Y,Z,M^\perp)$ of  BSDE~\eqref{eq:bsde} that satisfies the properties in $\bm{\left(C_{\textbf{BSDE}}\right)}$ and we use this solution in all subsequent statements. 
	In particular, $\wt \beta$ is then understood as the process defined by \eqref{eq:def_beta_tilde} using $(Y,Z)$ from this solution. 
	Observe that \eqref{eq:quad_expansion_J} and \eqref{eq:lower_bound_for_value_fct} hold for any such solution $(Y,Z,M^\perp)$. 
	Using the first part of Theorem~\ref{thm:sol_val_fct} we provide in Proposition~\ref{propo:bsdeuniquenessfrommaintheo} a uniqueness result for BSDE~\eqref{eq:bsde}.	 
\end{remark}

For $t\in [0,T]$ we use the notation $\PdM|_{[t,T]}$ for the restriction of the Dol{\'e}ans measure $\PdM$ to $\left( \Omega\times [t,T], \cF \otimes \mathcal{B}\left( [t,T] \right) \right)$. 

We proceed with describing the solution to our optimization problem~\eqref{eq:def_value_fct}. 
The case $x=\frac{d}{\gamma_t}$ is easy, and we treat it first:

\begin{lemma}\label{lem:optstratifzero}
	Let $\bm{\left(C_{>0}\right)}$ and $\bm{\left(C_{\textbf{BSDE}}\right)}$ be satisfied. 
	Suppose that $t\in [0,T]$ and $x,d \in \R$ with  $x=\frac{d}{\gamma_t}$. 
	Then, the value function is $V_t(x,d)= -\frac{d^2}{2\gamma_t}$, 
	and the strategy 
	$X^*=(X^*_s)_{s\in [t,T]}$ defined by $X^*_{t-}=x$, $X^*_s = 0$, $s \in [t,T]$, 
	which closes the position immediately, is optimal in $\mathcal A_t(x,d)$. 
	Moreover, this optimal strategy is unique up to $\PdM|_{[t,T]}$-null sets.
\end{lemma}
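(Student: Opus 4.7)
The plan is to first verify that $X^*$ is admissible and attains the lower bound from Theorem~\ref{thm:quadratic_exp_J}, which immediately identifies $V_t(x,d)$ and the optimality of $X^*$. Uniqueness is then extracted from the quadratic representation \eqref{eq:quad_expansion_J} combined with the deviation dynamics.

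\emph{Optimality of $X^*$.} The strategy $X^*$ consists of a single initial block trade $\Delta X^*_t=-d/\gamma_t$ followed by inactivity on $(t,T]$. Since $\gamma$ is continuous, $\Delta[\gamma,X^*]_t=0$, so \eqref{eq:deviation_def} gives $D^*_t=d+\gamma_t(-d/\gamma_t)=0$, and the equation $dD^*_s=-\rho_s D^*_s\,d[M]_s$ started at zero yields $D^*\equiv 0$ on $[t,T]$. In particular $X^*-\alpha D^*\equiv 0$, so (A1)--(A3) are trivial and $X^*\in\cA_t(x,d)$. Evaluating \eqref{eq:cost_fct} at the sole atom at time $t$,
\[
J_t(x,d,X^*)=D_{t-}\,\Delta X^*_t+\tfrac{\gamma_t}{2}(\Delta X^*_t)^2=-\frac{d^2}{\gamma_t}+\frac{d^2}{2\gamma_t}=-\frac{d^2}{2\gamma_t}.
\]
Because $x=d/\gamma_t$ makes the first summand in \eqref{eq:lower_bound_for_value_fct} vanish, the lower bound equals $-d^2/(2\gamma_t)$, so $V_t(x,d)=-d^2/(2\gamma_t)$ and $X^*$ is optimal.

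\emph{Uniqueness.} Let $\tilde X\in\cA_t(x,d)$ be any optimal strategy with associated deviation $\tilde D$, and set $U_s:=\gamma_s\tilde X_s-\tilde D_s$. Applying \eqref{eq:quad_expansion_J} at $x=d/\gamma_t$ forces the conditional expectation of the nonnegative integral to vanish; under $\bm{\left(C_{>0}\right)}$ together with $Y\in[0,1/2]$ from $\bm{\left(C_{\textbf{BSDE}}\right)}$ the weight $\sigma_s^2 Y_s+\tfrac12(2\rho_s+\mu_s-\sigma_s^2)$ is strictly positive $\PdM$-a.e., yielding $\wt\beta_s U_s+\tilde D_s=0$, i.e.\
\[
\tilde D_s=-\wt\beta_s U_s,\qquad \gamma_s\tilde X_s=(1-\wt\beta_s)U_s\qquad \PdM|_{[t,T]}\text{-a.e.}
\]
The initial conditions $\tilde X_{t-}=d/\gamma_t$, $\tilde D_{t-}=d$ together with continuity of $\gamma$ give $U_t=0$; integration by parts for $\gamma_s\tilde X_s$ combined with \eqref{eq:deviation_dyn} cancels the $\gamma\,dX$ and $d[\gamma,X]$ contributions and produces $dU_s=\tilde X_{s-}\,d\gamma_s+\rho_s\tilde D_s\,d[M]_s$. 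Replacing $\tilde X_{s-}$ by $\tilde X_s$ (permitted because $M$ and $[M]$ are continuous) and substituting the optimality relations transforms this into the homogeneous linear equation
\[
U_s=\int_t^s\bigl[(1-\wt\beta_r)\mu_r-\wt\beta_r\rho_r\bigr]U_r\,d[M]_r+\int_t^s(1-\wt\beta_r)\sigma_r U_r\,dM_r,\quad s\in[t,T],
\]
with $U_t=0$. A Gronwall-type argument applied to $U^2$ after localization forces $U\equiv 0$; hence $\gamma_s\tilde X_s=0$ and therefore $\tilde X_s=0=X^*_s$ $\PdM|_{[t,T]}$-a.e.

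The delicate step is the final identification $U\equiv 0$: because $\wt\beta$ is not a priori bounded (the denominator in \eqref{eq:def_beta_tilde} can be arbitrarily small), one must localize along stopping times that control quantities such as $\int(|\mu|+|\rho|)|\wt\beta|+(1-\wt\beta)^2\sigma^2\,d[M]$ before Itô and Gronwall apply; verifying that these stopping times exhaust $[t,T]$ relies on the admissibility conditions (A1)--(A3) (which translate into $L^2$-type bounds on $\tilde X-\alpha\tilde D=U/\gamma$) together with the integrability built into $\bm{\left(C_{\textbf{BSDE}}\right)}$.
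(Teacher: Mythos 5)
Your proof is correct and takes essentially the same route as the paper: optimality follows from $D^*\equiv0$ together with the representation in Theorem~\ref{thm:quadratic_exp_J}, and uniqueness from the vanishing of the quadratic remainder, which yields the relation $\wt\beta(\gamma \tilde X-\tilde D)+\tilde D=0$ and then a homogeneous linear equation for $\gamma(\tilde X-\alpha \tilde D)$ with zero initial value. The paper packages exactly these two steps as Lemma~\ref{lem:unifieddAcalc} (your computation of $dU_s=\tilde X_{s-}\,d\gamma_s+\rho_s\tilde D_s\,d[M]_s$) and Lemma~\ref{lem:uniqueness_of_opt_strat_up_to_Dm_null_sets}, concluding by uniqueness of the stochastic-exponential equation rather than by a Gronwall argument; the localization issue you flag for unbounded $\wt\beta$ is present in the paper's version too and is no worse in yours.
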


In order to describe the solution beyond the case $x=\frac{d}{\gamma_t}$, 
we introduce the condition 
\begin{enumerate}
\item[$\bm{\left(C_{[M]}\right)}$]  \qquad for all $c\in(0,\infty)\colon$ $E\left[ \exp( c \ [M]_T ) \right] < \infty$ .
\end{enumerate}  
Note that if $M=W$ is an $\left(\cF_s\right)$-Brownian motion, then $\bm{\left(C_{[M]}\right)}$ is trivially satisfied.

For a continuous semimartingale $Q=(Q_s)_{s\in[0,T]}$ 
we denote by $\mathcal{E}(Q)=\left( \mathcal{E}(Q)_s \right)_{s\in [0,T]}$ its stochastic exponential, i.e., 
$\mathcal{E}(Q)_s= \exp\left( Q_s - Q_0 - \frac12 [Q]_s \right),$ $s\in[0,T]$. 
For $t\leq s$ in $[0,T]$ we also use the notation 
$\mathcal{E}(Q)_{t,s}  = \frac{\mathcal{E}(Q)_s}{\mathcal{E}(Q)_t}
= \exp\left( Q_s - Q_t - \frac12 \left( [Q]_s - [Q]_t \right) \right)$.

\begin{theo}\label{thm:sol_val_fct}
Let $\bm{\left(C_{>0}\right)}$, $\bm{\left(C_{\textbf{BSDE}}\right)}$ and $\bm{\left(C_{[M]}\right)}$ be satisfied. 
Suppose furthermore that $\rho$, $\mu$ and $\wt{\beta}$ (defined by \eqref{eq:def_beta_tilde}) are $\PdM$-a.e. bounded. 

\noindent
\begin{enumerate}
\item[(i)] (Representation of the value function) 
	
	For all $x,d \in \R$ and $t\in[0,T]$ it holds 
	\begin{equation*}
	V_t(x,d)= \frac{Y_{t}}{\gamma_t}\left(d-\gamma_t x\right)^2-\frac{d^2}{2\gamma_t} \;\;\text{a.s.}
	\end{equation*}
	
\item[(ii)]
(Characterization for the existence of the optimizer)
	
	Let $x,d \in \R$ and assume $x\neq \frac{d}{\gamma_0}$. Then
	there exists an optimal strategy $X^*=(X^*_s)_{s\in [0,T]}\in \mathcal A_0(x,d)$ if and only if there exists a c\`adl\`ag semimartingale $\beta=(\beta_s)_{s\in [0,T]}$ such that $\wt{\beta} = \beta$ $\PdM$-a.e. 
	
	In this case, the optimal strategy is unique up to $\PdM$-null sets.

\item[(iii)]
(Representations for the optimal strategy and deviation process when the optimizer exists)

Consider the case that there exists a c\`adl\`ag semimartingale $\beta=(\beta_s)_{s\in [0,T]}$ such that $\wt{\beta} = \beta$ $\PdM$-a.e. Define
\begin{equation}\label{eq:01052020a1}
	Q_s = - \int_0^s \beta_r \sigma_r dM_r - \int_0^s \beta_r (\mu_r + \rho_r - \sigma_r^2) d[M]_r, \quad s\in [0,T].
\end{equation}
Let $x,d \in \R$ and $t\in [0,T]$.
Then the optimal strategy
$\left( X^*_s \right)_{s \in [t,T]} \in \cA_t(x,d)$
and the associated deviation process
$(D^*_s)_{s\in[t,T]}$
(both unique up to $\PdM|_{[t,T]}$-null sets)
are given by the formulas $X^*_{t-}=x$, $D^*_{t-}=d$,
\begin{align}
X^*_s
&=
\left(x-\frac{d}{\gamma_t}\right)\cE(Q)_{t,s}\,(1-\beta_s),
\quad s\in [t,T),
\label{eq:opt_strat_representation}\\
D^*_s
&=
\left(x-\frac{d}{\gamma_t}\right)\cE(Q)_{t,s}\,(-\gamma_s\beta_s),
\quad s\in [t,T),
\label{eq:opt_dev_representation}
\end{align}
and $X^*_T=0$,
$D^*_T=\left(x-\frac{d}{\gamma_t}\right)\cE(Q)_{t,T}\,(-\gamma_T)$.
\end{enumerate}
\end{theo}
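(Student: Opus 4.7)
The whole analysis pivots on the decomposition \eqref{eq:quad_expansion_J}. Because $Y\in[0,1/2]$ and $\bm{\left(C_{>0}\right)}$ is in force, the weight
\begin{equation*}
\Phi_s\;:=\;\sigma_s^2 Y_s + \tfrac{1}{2}(2\rho_s+\mu_s-\sigma_s^2)
\end{equation*}
in \eqref{eq:quad_expansion_J} is strictly positive $\PdM$-a.e. Hence the remainder in \eqref{eq:quad_expansion_J} is non-negative and vanishes precisely when the algebraic identity
\begin{equation*}
\wt{\beta}_s(\gamma_s X_s - D_s) + D_s \;=\; 0 \qquad \PdM|_{[t,T]}\text{-a.e.}
\end{equation*}
holds. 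My strategy is to realize this identity exactly (yielding (iii) and the existence half of (ii)) or only in the limit (upgrading the lower bound \eqref{eq:lower_bound_for_value_fct} to the equality in (i) when no semimartingale version of $\wt{\beta}$ is available).

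For (iii) and the ``if'' direction of (ii), assume a c\`adl\`ag semimartingale $\beta$ with $\beta=\wt{\beta}$ $\PdM$-a.e. Define $X^*$ and $D^*$ by \eqref{eq:opt_strat_representation}--\eqref{eq:opt_dev_representation}. I would verify, in order: (a) both are c\`adl\`ag semimartingales with the stated initial and terminal values, which amounts to a jump at $s=t$ (from $x$ to $(x-d/\gamma_t)(1-\beta_t)$, with a corresponding $\gamma\Delta X^*$ jump in $D^*$) and a jump at $s=T$ (where $X^*$ is forced to $0$); (b) an It\^o-product-rule calculation on $(x-d/\gamma_t)\cE(Q)_{t,\cdot}(-\gamma\beta)$, using \eqref{eq:dyn_gamma} for $d\gamma$, shows $D^*$ satisfies \eqref{eq:deviation_dyn} driven by $X^*$ --- the choice \eqref{eq:01052020a1} of $Q$ is calibrated precisely so that, after cancellations, the only term not accounted for by $\gamma_s\,dX^*_s+d[\gamma,X^*]_s$ is the resilience term $-\rho_s D^*_s\,d[M]_s$; (c) the pointwise identity $D^*_s+\beta_s(\gamma_s X^*_s-D^*_s)=0$ on $[t,T]$ follows directly from the explicit formulas; (d) $X^*\in\cA_t(x,d)$, which reduces (A1)--(A3) to moment bounds on products like $\gamma_s\cE(Q)_{t,s}$, furnished by $\bm{\left(C_{[M]}\right)}$ together with boundedness of $\beta,\rho,\mu$ via standard exponential-martingale and BDG estimates. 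Substituting $(X^*,D^*)$ into \eqref{eq:quad_expansion_J} yields $J_t(x,d,X^*)=\tfrac{Y_t}{\gamma_t}(d-\gamma_tx)^2-\tfrac{d^2}{2\gamma_t}$, which gives (i) in this setting, existence in (ii), and the representations in (iii).

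For (i) in general, I would approximate $\wt{\beta}$ by a sequence $\beta^n$ of bounded simple predictable processes (trivially c\`adl\`ag semimartingales) with $\beta^n\to\wt{\beta}$ in $L^2(\PdM)$ and $\sup_n\|\beta^n\|_\infty<\infty$, using boundedness of $\wt{\beta}$. Each $\beta^n$ produces $X^n\in\cA_t(x,d)$ via \eqref{eq:opt_strat_representation}, and Theorem~\ref{thm:quadratic_exp_J} together with uniform moment bounds on $\gamma\cE(Q^n)$ yields $J_t(x,d,X^n)\to\tfrac{Y_t}{\gamma_t}(d-\gamma_tx)^2-\tfrac{d^2}{2\gamma_t}$ by dominated convergence, which combined with \eqref{eq:lower_bound_for_value_fct} proves (i). For the ``only if'' part of (ii) under $x\neq d/\gamma_0$, any optimizer $X^*$ forces the remainder integrand in \eqref{eq:quad_expansion_J} to vanish, giving $\wt{\beta}_s=D^*_s/(D^*_s-\gamma_s X^*_s)$ $\PdM$-a.e.; a stochastic-exponential argument on the deviation dynamics shows $D^*-\gamma X^*$ does not vanish on $[0,T]$ when $x\neq d/\gamma_0$, so $\wt{\beta}$ has a c\`adl\`ag semimartingale version. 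Uniqueness of $X^*$ then follows because the identity $D^*_s=-\tfrac{\beta_s\gamma_s}{1-\beta_s}X^*_s$ together with \eqref{eq:deviation_dyn} determines $X^*$ up to $\PdM$-null sets.

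The main technical obstacle will be the rigorous verification of (A1)--(A3) for $X^*$ in (iii) and uniformly for the approximating $X^n$, since these conditions involve suprema and $L^2(d[M])$-norms of products such as $\gamma_s\cE(Q)_{t,s}$. The required estimates follow from $\bm{\left(C_{[M]}\right)}$ and boundedness of $\beta,\rho,\mu$ via Novikov-type and BMO arguments, but the bookkeeping with $\sigma$ (which is not assumed bounded, only square-integrable against $d[M]$) is delicate. A secondary subtlety is the careful handling of the jumps of $X^*$ at $s=t$ and $s=T$, where consistency with \eqref{eq:deviation_dyn} must be checked using $\Delta[\gamma,X^*]=0$ by continuity of $\gamma$, and where the algebraic vanishing of the remainder integrand is automatic because $[M]$ does not charge single time points.
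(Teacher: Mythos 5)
Your proposal is correct and follows essentially the same route as the paper's proof: everything pivots on the decomposition \eqref{eq:quad_expansion_J} of Theorem~\ref{thm:quadratic_exp_J}, part (i) is obtained by approximating $\wt{\beta}$ with uniformly bounded c\`adl\`ag semimartingales $\beta^n$ and passing to the limit in the cost of the induced strategies, the ``only if'' half of (ii) comes from the non-vanishing of the stochastic exponential governing $A^*=X^*-\alpha D^*$ when $x\neq d/\gamma_0$, and (iii) is the explicit construction. Two small remarks: the delicacy you anticipate with $\sigma$ evaporates, since $\bm{\left(C_{>0}\right)}$ together with boundedness of $\rho$ and $\mu$ forces $\sigma^2\le 2\ol\rho+\ol\mu$ $\PdM$-a.e.; and in the uniqueness step the division by $1-\beta$ is unsafe ($\wt\beta$ can equal $1$, e.g.\ when $\rho\equiv0$), which the paper avoids by showing that any two optimizers produce the same process $A=X-\alpha D$ via the linear equation $dA=\wt\beta A\left(\frac{d\alpha}{\alpha}-\rho\,d[M]\right)$ and then recovering $D=-\wt\beta\gamma A$ and $X=A+\alpha D$.
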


\begin{remark}\label{rem:interpretation_Y}
(a) \emph{(Economic interpretation of the process~$Y$)}
Given a unit open position $x=1$ and a deviation $d=0$ at time $t\in [0,T]$, a possible strategy is to close the position immediately at time $t$ and keep it closed in the remaining period (i.e., $X_s=0$ for all $s\in [t,T]$). The cost associated to this strategy is given by $J_t(1,0,X)=\gamma_t/2$. Theorem~\ref{thm:sol_val_fct} shows that under appropriate assumptions the minimal costs in this situation are given by $V_t(1,0)=\gamma_tY_t$. Therefore, we obtain for all $t\in [0,T]$ that $2Y_t=V_t(1,0)/J_t(1,0,X)$ and thus the random variable $2Y_t\colon \Omega \to [0,1]$ describes to which percentage the costs of selling one unit immediately at time $t$ can be reduced by executing the position optimally.

\smallskip
(b) \emph{(On the boundedness assumptions in Theorem~\ref{thm:sol_val_fct})}
In addition to $\bm{\left(C_{>0}\right)}$ and $\bm{\left(C_{\textbf{BSDE}}\right)}$, which are already assumed in Theorem~\ref{thm:quadratic_exp_J},
we need $\bm{\left(C_{[M]}\right)}$ and the boundedness of $\rho$, $\mu$ (hence, due to $\bm{\left(C_{>0}\right)}$, also $\sigma^2$) and $\wt\beta$ in Theorem~\ref{thm:sol_val_fct}.
These are strong sufficient conditions for the validity of Theorem~\ref{thm:sol_val_fct} and can be replaced by appropriate integrability assumptions.
For instance, a straightforward generalization of part~(iii) obtained along the lines of our proofs (cf.\ Remarks \ref{rem:26032021a1} and~\ref{rem:26032021a2} below) is as follows: Assume
\begin{enumerate}
\item\label{it:26032021a4}
$\bm{\left(C_{>0}\right)}$,
$\bm{\left(C_{\textbf{BSDE}}\right)}$,

\item\label{it:26032021a3}
the existence of a c\`adl\`ag semimartingale $\beta=(\beta_s)_{s\in[0,T]}$ such that $\wt\beta=\beta$ $\PdM$-a.e.,

\item\label{it:26032021a1}
$E\left[\int_0^T\beta_s^4\sigma_s^2\,d[M]_s\right]<\infty$,

\item\label{it:26032021a2}
$E\left[ \exp\left\{ c \int_0^T\chi_s\,d[M]_s \right\}\right] < \infty$
for all $c>0$,
where $\chi=\sigma^2+\beta^2\sigma^2+(\mu-2\beta\mu-2\beta\rho)^+$.
\end{enumerate}
Then the value function is given by the formula in part~(i) of Theorem~\ref{thm:sol_val_fct},
and the unique optimal strategy and the associated deviation process are given by the formulas in part~(iii) of Theorem~\ref{thm:sol_val_fct}.
Notice that the integrability conditions \ref{it:26032021a1} and~\ref{it:26032021a2} above are satisfied whenever we assume $\bm{\left(C_{[M]}\right)}$ and the boundedness of $\rho$, $\mu$ and $\wt\beta$.

It is possible also to get the remaining messages of Theorem~\ref{thm:sol_val_fct}
(namely, part~(ii) and, in the case when condition~\ref{it:26032021a3} above is not satisfied, part~(i))
under certain integrability-type conditions instead of the boundedness-type ones. Such integrability conditions, however, look more cumbersome than \ref{it:26032021a1} and~\ref{it:26032021a2} above.
 Given that in all examples (with some novel effects) discussed below 
the boundedness assumptions are satisfied, we formulate Theorem~\ref{thm:sol_val_fct} under the boundedness assumptions and restrain from further technical
discussions of this point.
\end{remark}

We now observe that the optimal strategy and the optimal deviation process are dynamically consistent.

\begin{corollary}[Dynamic consistency when the optimizer exists]\label{cor:07052020a1}
Under the assumptions of Theorem~\ref{thm:sol_val_fct}
consider the case that there exists a c\`adl\`ag semimartingale $\beta=(\beta_s)_{s\in [0,T]}$ such that $\wt{\beta} = \beta$ $\PdM$-a.e. Define the process $Q$ as in~\eqref{eq:01052020a1}.
Let $x,d \in \R$ and $t\in [0,T]$.
Then, for the optimal strategy and deviation process given in
\eqref{eq:opt_strat_representation}--\eqref{eq:opt_dev_representation}
and for any $u\in(t,T)$, we have
\begin{align*}
X^*_s
&=
\left(X^*_{u-}-\frac{D^*_{u-}}{\gamma_u}\right)\cE(Q)_{u,s}\,(1-\beta_s),
\quad s\in [u,T),
\\
D^*_s
&=
\left(X^*_{u-}-\frac{D^*_{u-}}{\gamma_u}\right)\cE(Q)_{u,s}\,(-\gamma_s\beta_s),
\quad s\in [u,T),
\end{align*}
and $X^*_T=0$,
$D^*_T=\left(X^*_{u-}-\frac{D^*_{u-}}{\gamma_u}\right)\cE(Q)_{u,T}\,(-\gamma_T)$.
\end{corollary}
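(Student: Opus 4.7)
The plan is to derive the claim by a direct algebraic manipulation of the formulas \eqref{eq:opt_strat_representation}--\eqref{eq:opt_dev_representation} provided by Theorem~\ref{thm:sol_val_fct}(iii), exploiting the continuity of $Q$ and $\gamma$ together with the multiplicative property of the stochastic exponential. No fresh optimization argument is needed: the corollary is essentially a verification that the already-identified optimizer is self-similar in the sense described.

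First I would write down the values of $X^*_{u-}$ and $D^*_{u-}$ from \eqref{eq:opt_strat_representation} and \eqref{eq:opt_dev_representation}. Since $M$ is a continuous local martingale, inspection of \eqref{eq:01052020a1} shows that $Q$ is continuous, so $\cE(Q)$ is continuous as well; likewise $\gamma$ is continuous by assumption, so $\gamma_{u-}=\gamma_u$ and $\cE(Q)_{t,u-}=\cE(Q)_{t,u}$. Hence
\begin{equation*}
X^*_{u-}=\left(x-\tfrac{d}{\gamma_t}\right)\cE(Q)_{t,u}(1-\beta_{u-}),\qquad D^*_{u-}=\left(x-\tfrac{d}{\gamma_t}\right)\cE(Q)_{t,u}(-\gamma_u\beta_{u-}).
\end{equation*}

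Second, I would compute the key quantity $X^*_{u-}-D^*_{u-}/\gamma_u$. Substituting the expressions above, the $\beta_{u-}$ terms telescope via $(1-\beta_{u-})-(-\beta_{u-})=1$, and we obtain the clean identity
\begin{equation*}
X^*_{u-}-\frac{D^*_{u-}}{\gamma_u}=\left(x-\frac{d}{\gamma_t}\right)\cE(Q)_{t,u}.
\end{equation*}

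Third, using the continuity of $Q$ to factor $\cE(Q)_{t,s}=\cE(Q)_{t,u}\,\cE(Q)_{u,s}$ for $s\in[u,T]$, I would plug this identity back into \eqref{eq:opt_strat_representation}--\eqref{eq:opt_dev_representation} and also into the terminal values $X^*_T=0$ and $D^*_T=(x-d/\gamma_t)\cE(Q)_{t,T}(-\gamma_T)$. This immediately yields the asserted expressions
\begin{equation*}
X^*_s=\left(X^*_{u-}-\tfrac{D^*_{u-}}{\gamma_u}\right)\cE(Q)_{u,s}(1-\beta_s),\quad D^*_s=\left(X^*_{u-}-\tfrac{D^*_{u-}}{\gamma_u}\right)\cE(Q)_{u,s}(-\gamma_s\beta_s)
\end{equation*}
on $[u,T)$, together with the stated value of $D^*_T$. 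The only mild subtlety worth flagging is the justified use of left-limits at $u$, which hinges on the continuity of $\gamma$ and of $\cE(Q)$; beyond that, there is no genuine obstacle, as the statement is a structural consequence of the exponential form of the optimizer rather than a separate optimality argument.
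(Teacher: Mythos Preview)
Your proof is correct and follows essentially the same route as the paper: both establish the key identity $X^*_{u-}-D^*_{u-}/\gamma_u=(x-d/\gamma_t)\,\cE(Q)_{t,u}$ and then invoke the multiplicative property $\cE(Q)_{t,s}=\cE(Q)_{t,u}\,\cE(Q)_{u,s}$. The only cosmetic difference is that the paper first notes the general fact (from~\eqref{eq:deviation_dyn}) that $X-D/\gamma$ is continuous and then evaluates the formulas at $u$, whereas you take left limits directly in the explicit formulas and let the $\beta_{u-}$ terms cancel; both arguments are equally valid and equally short.
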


In the case of vanishing resilience it turns out that it is always optimal to close the position immediately and then stay inactive. 
We formally state this as 

\begin{propo}\label{propo:withoutresiliencecloseimmediately}
	Assume $\bm{\left(C_{>0}\right)}$ and $\rho \equiv 0$. 
	Then, for all $t\in [0,T]$ and $x,d \in \R$, 
	we have $V_t(x,d) = -x\left( d-\frac{\gamma_t}{2}x \right)$ 
	and there exists a unique (up to $\PdM|_{[t,T]}$-null sets) optimal strategy $\left( X^*_s \right)_{s \in [t,T]} \in \cA_t(x,d)$ and it is given by $X^*_{t-}=x$, $X^*_s=0$, $s\in [t,T]$.
\end{propo}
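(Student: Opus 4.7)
The plan is to identify a trivial solution of BSDE~\eqref{eq:bsde} in the regime $\rho\equiv 0$, apply Theorem~\ref{thm:quadratic_exp_J} to obtain a matching lower bound for $V_t(x,d)$, and then verify by a direct computation that the immediate-closure strategy attains it.

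First I would check that $(Y,Z,M^\perp)\equiv(1/2,0,0)$ solves BSDE~\eqref{eq:bsde}. Plugging these values and $\rho_s=0$ into the driver~\eqref{eq:driver_of_bsde} yields
\[
f(s,1/2,0)=-\frac{(\mu_s/2)^2}{\sigma_s^2/2+(\mu_s-\sigma_s^2)/2}+\frac{\mu_s}{2}=-\frac{\mu_s}{2}+\frac{\mu_s}{2}=0,
\]
where the denominator equals $\mu_s/2>0$ thanks to $\bm{\left(C_{>0}\right)}$ (which under $\rho\equiv 0$ reads $\mu-\sigma^2>0$, so in particular $\mu>0$ $\PdM$-a.e.). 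Since both integrands in~\eqref{eq:bsde} vanish, the equation is satisfied. The $[0,1/2]$-valuedness of $Y$ and the vanishing of $Z$ and $M^\perp$ give $\bm{\left(C_{\textbf{BSDE}}\right)}$ for free. With this choice, $\wt\beta_s$ in~\eqref{eq:def_beta_tilde} equals $\frac{(\mu_s/2)}{\mu_s/2}=1$ $\PdM$-a.e., and the weight $\sigma_s^2 Y_s+\tfrac12(2\rho_s+\mu_s-\sigma_s^2)$ simplifies to $\mu_s/2$.

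Next, the lower bound~\eqref{eq:lower_bound_for_value_fct} from Theorem~\ref{thm:quadratic_exp_J} becomes
\[
V_t(x,d)\geq\frac{1}{2\gamma_t}(d-\gamma_t x)^2-\frac{d^2}{2\gamma_t}=-x\left(d-\frac{\gamma_t}{2}x\right).
\]
To see this is attained, consider $X^*_{t-}=x$, $X^*_s=0$ for $s\in[t,T]$. Its only jump is $\Delta X^*_t=-x$, and $X^*$ is constant afterwards, so $[\gamma,X^*]\equiv 0$ (since $\gamma$ is continuous and $(X^*)^c=0$). Then~\eqref{eq:deviation_def} with $\rho\equiv 0$ gives $D^*_s=d-\gamma_t x$ for all $s\in[t,T]$. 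Using the convention on jumps at the initial time (cf.\ footnote~\ref{ft:01042021a2}),
\[
\int_{[t,T]}D^*_{s-}\,dX^*_s=d\cdot(-x)=-dx,\qquad
\int_{[t,T]}\frac{\gamma_s}{2}\,d[X^*]_s=\frac{\gamma_t}{2}x^2,
\]
so $J_t(x,d,X^*)=-x(d-\tfrac{\gamma_t}{2}x)$, matching the lower bound. Admissibility of $X^*$ in $\cA_t(x,d)$ follows because $(X^*_s-\alpha_s D^*_s)$ is $\cF_t$-measurable for $s\ge t$, reducing (A1)--(A3) to standard integrability of $\alpha$, $\alpha\sigma$ against $d[M]$, which is built into the setup.

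Finally, for uniqueness, I would read off~\eqref{eq:quad_expansion_J}: with $\wt\beta\equiv 1$ the bracket $\wt\beta_s(\gamma_s X_s-D_s)+D_s$ reduces to $\gamma_s X_s$, so for any $X\in\cA_t(x,d)$,
\[
J_t(x,d,X)-V_t(x,d)=E_t\!\left[\int_t^T\gamma_s X_s^2\,\frac{\mu_s}{2}\,d[M]_s\right].
\]
Since $\gamma>0$ and $\mu>0$ $\PdM$-a.e., optimality of $X$ forces $X_s=0$ $\PdM|_{[t,T]}$-a.e., giving the claimed uniqueness. The only mild technical point is the careful bookkeeping of the initial jump in $X^*$ and $D^*$ when computing the integrals; everything else is an algebraic consequence of the trivial BSDE solution.
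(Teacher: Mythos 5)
Your proof is correct and follows essentially the same route as the paper: the trivial solution $(Y,Z,M^\perp)=(\tfrac12,0,0)$ of BSDE~\eqref{eq:bsde}, $\wt\beta\equiv1$, and the resulting cost representation $J_t(x,d,X)=-x(d-\tfrac{\gamma_t}{2}x)+E_t\bigl[\int_t^T\tfrac12\gamma_s\mu_s X_s^2\,d[M]_s\bigr]$ together with $\mu>0$ $\PdM$-a.e., which gives the value function, the optimizer, and (via the same computation that underlies the paper's Lemma~\ref{lem:uniqueness_of_opt_strat_up_to_Dm_null_sets}, which you in effect re-derive directly) the uniqueness. One slip in your admissibility remark: $X^*_s-\alpha_s D^*_s=-\alpha_s(d-\gamma_t x)$ is \emph{not} $\cF_t$-measurable for $s>t$, so (A1)--(A3) reduce to conditional integrability of $\alpha$ and $\alpha\sigma$ that is not literally part of the standing assumptions --- though the paper's own proof is equally silent on this point, so this does not put you behind the reference argument.
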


It is worth noting that we need not assume $\bm{\left(C_{\textbf{BSDE}}\right)}$ and $\bm{\left(C_{[M]}\right)}$ 
in Proposition~\ref{propo:withoutresiliencecloseimmediately}.  
We will see, however, that $\bm{\left(C_{\textbf{BSDE}}\right)}$ is always satisfied in this case.

We close the section with a uniqueness result for BSDE~\eqref{eq:bsde}.

\begin{propo}\label{propo:bsdeuniquenessfrommaintheo}
Assume $\bm{\left(C_{>0}\right)}$, $\bm{\left(C_{[M]}\right)}$ and that $\rho$, $\mu$ are $\PdM$-a.e.\ bounded. 
Let $\left( Y^{(i)}, Z^{(i)}, M^{\perp, (i)} \right)$, $i=1,2$, be solutions of BSDE~\eqref{eq:bsde} 
such that $Y^{(i)}$ are $[0,1/2]$-valued, $E\left[ [M^{\perp, (i)}_T] \right] < \infty$ and $E\left[ \int_0^T \left( Z_s^{(i)} \right)^2 d[M]_s \right] < \infty$, $i=1,2$ (cf.\ $\bm{\left(C_{\textbf{BSDE}}\right)}$), and such that the processes 
$\wt{\beta}^{(i)} = (\wt{\beta}^{(i)}_s)_{s\in [0,T]}$ defined by~\eqref{eq:def_beta_tilde} for $(Y^{(i)},Z^{(i)})$, $i=1,2$,  i.e., 
\begin{equation*}
	\wt{\beta}^{(i)}_s = \frac{(\rho_s+\mu_s)Y^{(i)}_s+\sigma_s Z^{(i)}_s}{\sigma_s^2 Y^{(i)}_s + \frac12 (2\rho_s + \mu_s - \sigma_s^2 )},  \quad s\in [0,T], i=1,2,
\end{equation*}  
are $\PdM$-a.e. bounded. 
Then: 
\begin{itemize}
	\item $Y^{(1)}$ and $Y^{(2)}$ are indistinguishable, 
	\item $Z^{(1)}=Z^{(2)}$ $\PdM$-a.e., 
	\item $M^{\perp, (1)}$ and $M^{\perp, (2)}$ are indistinguishable.
\end{itemize}
\end{propo}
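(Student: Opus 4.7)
The plan is to reduce uniqueness of the solution of BSDE~\eqref{eq:bsde} to uniqueness of the value function, using Theorem~\ref{thm:sol_val_fct}(i). First I would verify that the hypotheses of Proposition~\ref{propo:bsdeuniquenessfrommaintheo} are tailored so that Theorem~\ref{thm:sol_val_fct} applies to each of the two solutions separately: $\bm{\left(C_{>0}\right)}$, $\bm{\left(C_{[M]}\right)}$ and the boundedness of $\rho,\mu$ are stated; $\bm{\left(C_{\textbf{BSDE}}\right)}$ is witnessed by the triple $(Y^{(i)},Z^{(i)},M^{\perp,(i)})$ itself; and $\wt{\beta}^{(i)}$ is $\PdM$-a.e.\ bounded by assumption.

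Since $V_t(x,d)=\essinf_{X\in\cA_t(x,d)}J_t(x,d,X)$ is defined intrinsically and is independent of any BSDE solution, Theorem~\ref{thm:sol_val_fct}(i) applied to $i=1,2$ yields, for any fixed $x,d\in\R$ and $t\in[0,T]$,
\begin{equation*}
\frac{Y^{(1)}_t}{\gamma_t}(d-\gamma_t x)^2 - \frac{d^2}{2\gamma_t} \;=\; V_t(x,d) \;=\; \frac{Y^{(2)}_t}{\gamma_t}(d-\gamma_t x)^2 - \frac{d^2}{2\gamma_t} \quad\text{a.s.}
\end{equation*}
Choosing e.g.\ $x=0$, $d=1$ gives $Y^{(1)}_t=Y^{(2)}_t$ a.s.\ for each fixed $t$. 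Running this along a countable dense subset of $[0,T]$ (and using $Y^{(i)}_T=1/2$) and invoking the right-continuity of the $Y^{(i)}$ promotes this to indistinguishability of $Y^{(1)}$ and $Y^{(2)}$.

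Setting $Y:=Y^{(1)}=Y^{(2)}$, the BSDE provides two canonical decompositions of the special semimartingale $Y$: for each $i$, the local martingale part is $N^{(i)}=\int_0^\cdot Z^{(i)}_s\,dM_s+M^{\perp,(i)}$, while the continuous (hence predictable) finite-variation part is $A^{(i)}_t=-\int_0^t f(s,Y_s,Z^{(i)}_s)\,d[M]_s$. Uniqueness of the canonical decomposition forces $N^{(1)}=N^{(2)}$, i.e.
\begin{equation*}
\int_0^\cdot (Z^{(1)}_s-Z^{(2)}_s)\,dM_s \;=\; M^{\perp,(2)}-M^{\perp,(1)}.
\end{equation*}
Taking the quadratic covariation of both sides with $M$ and using $[M^{\perp,(i)},M]=0$ together with the continuity of $M$ yields $\int_0^t(Z^{(1)}_s-Z^{(2)}_s)\,d[M]_s=0$ for every $t$, whence $Z^{(1)}=Z^{(2)}$ $\PdM$-a.e.; feeding this back gives $M^{\perp,(1)}=M^{\perp,(2)}$ indistinguishable.

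The only step that truly needs attention is the opening one, namely the legitimate simultaneous application of Theorem~\ref{thm:sol_val_fct}(i) to both solutions; the rest is the standard uniqueness of the canonical decomposition of a special semimartingale together with a short orthogonality computation. The appeal of this route is that it sidesteps any direct analysis of the quadratic BSDE (e.g.\ a comparison or contraction argument applied to the difference $Y^{(1)}-Y^{(2)}$), which would be substantially more delicate because of the singular $(Y,Z)$-dependence of the driver in~\eqref{eq:driver_of_bsde}.
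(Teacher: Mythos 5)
Your proposal is correct and follows essentially the same route as the paper: identify $Y^{(1)}$ and $Y^{(2)}$ through the intrinsic value function via Theorem~\ref{thm:sol_val_fct}(i), then compare the canonical decompositions of the special semimartingale $Y$ and use $[M^{\perp,(i)},M]=0$ to separate the $Z$- and $M^\perp$-parts. The only (immaterial) differences are the choice of $(x,d)$ and that you extract $Z^{(1)}=Z^{(2)}$ first by bracketing with $M$, whereas the paper first kills $M^{\perp,(1)}-M^{\perp,(2)}$ by showing its quadratic variation vanishes.
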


\section{Reason for adjusting the cost functional}\label{sec:reason_cost_fct}

\subsection{Counterexample}\label{sec:fromsec52offirstversion}

In this subsection we show that minimizing the cost functional  
\begin{equation}\label{eq:cost_fct_tilde}
\wt J_t(x,d,X)=E_t\left[\int_{[t,T]}\left(D_{s-}+\frac{\gamma_s}{2}\Delta X_s\right)	dX_s\right], \quad 
x,d \in\mathbb{R}, \; t\in[0,T], \; X\in \mathcal{A}_t(x,d), 
\end{equation}  
(cf.\ \eqref{eq:13022020a2_intro} above or formula~(5) in \cite{fruth2019optimal}) over $\mathcal{A}_t(x,d)$ might constitute an ill-posed problem. 
More precisely, we construct an example, where 
$\essinf_{X \in \cA_t(x,d)} \wt J_t(x,d,X) = - \infty $.

Consider the setting, where $M=W$ is an $(\cF_s)$-Brownian motion and
the price impact $\gamma>0$ and the resilience $\rho>0$ are positive deterministic constants
(that is, $\mu =\sigma \equiv 0$ in terms of our model parameters).

We consider the starting time $t=0$ and assume (for simplicity) that the $\sigma$-field $\mathcal{F}_0$ is trivial.
As $\gamma$ is constant, for all $X\in \cA_0(x,d)$ the associated deviation process $D$ satisfies 
$$
dD_s = -\rho_sD_{s}\,ds + \gamma_s\,dX_s +d[\gamma,X]_s = -\rho D_{s}\,ds + \gamma\,dX_s,\quad s\in[0,T].
$$
In particular, in this setting the dynamics of $D$ is of type~\eqref{eq:13022020a1_intro} (cf.\ formula~(2) in \cite{fruth2019optimal}).

Fix the initial position $x=0$ and the initial deviation $d=0$.
For $\nu \in \bbR$ consider the execution strategy $X^{(\nu)}=( X_s^{(\nu)} )_{s\in[0,T]}$ defined by $X^{(\nu)}_{0-}=X^{(\nu)}_{0}=0$, $X^{(\nu)}_s=\nu W_s$ for $s\in[0,T)$ and $X^{(\nu)}_T=0$,
i.e., the strategy follows a scaled Brownian motion
on $[0,T)$ and has a block trade at time $T$.
For each $\nu \in \bbR$, let $D^{(\nu)}=( D_s^{(\nu)} )_{s\in[0,T]}$ be the deviation process associated to $X^{(\nu)}$, i.e., $d D^{(\nu)}_s = -\rho D^{(\nu)}_{s}\,ds + \gamma \nu \,dW_s$, $s\in[0,T]$, and $D^{(\nu)}_{0-}=0$. Note that $D^{(\nu)}$ is an Ornstein-Uhlenbeck process. 
One can therefore show that (A1) is satisfied, and due to $\sigma\equiv0$, (A2) and (A3) are satisfied as well, thus $X^{(\nu)}\in\cA_0(0,0)$ for all $\nu \in \bbR$. 
Since $\int_0^{\cdot} D^{(\nu)}_s dW_s$ is a martingale and $X^{(\nu)}$ is continuous on $(0,T)$ and $\Delta X^{(\nu)}_T=-X^{(\nu)}_{T-}$, it holds for all $\nu \in \bbR$ that 
\begin{equation*}
\begin{split}
& \wt J_0(0,0,X^{(\nu)})  
= E\left[ \int_{[0,T)} D^{(\nu)}_{s-} dX^{(\nu)}_s + D^{(\nu)}_{T-} \Delta X^{(\nu)}_T + \frac{\gamma}{2} \left( \Delta X^{(\nu)}_T \right)^2 \right] \\
& \quad  = E\left[ \nu \int_0^T D^{(\nu)}_s dW_s - D^{(\nu)}_{T-} X^{(\nu)}_{T-} + \frac{\gamma}{2} \nu^2 W_T^2 \right] 
= - E\left[ D^{(\nu)}_{T-} X^{(\nu)}_{T-} \right] + \frac{\gamma}{2} \nu^2 T .
\end{split}
\end{equation*} 
We have that 
\begin{equation*}
d (D^{(\nu)} X^{(\nu)})_s = \nu D^{(\nu)}_s dW_s - \rho X^{(\nu)}_s D^{(\nu)}_s ds + \gamma \nu^2 W_s dW_s + \gamma \nu^2 ds, \quad s\in [0,T), 
\end{equation*}  
and hence 
\begin{equation*}
E\left[ X^{(\nu)}_s D^{(\nu)}_s \right] = - \rho \int_0^s E\left[ X^{(\nu)}_u D^{(\nu)}_u \right] du + \gamma \nu^2 s , \quad s\in [0,T).
\end{equation*}
It follows that  
\begin{equation*}
E\left[ X^{(\nu)}_s D^{(\nu)}_s \right] = \frac{\gamma \nu^2}{\rho} \left( 1 - e^{-\rho s} \right) , \quad s \in [0,T).
\end{equation*}
Therefore, we obtain for all $\nu \in \bbR$
\begin{equation*}
\wt J_0(0,0,X^{(\nu)})  = -  \frac{\gamma \nu^2}{\rho} \left( 1 - e^{-\rho T} \right)  + \frac{\gamma}{2} \nu^2 T 
= \frac{\gamma \nu^2}{\rho} \left( e^{-\rho T} - 1 + \frac{\rho T}{2} \right).
\end{equation*} 
Now we see that, if $\rho>0$ is chosen in the way that
$e^{-\rho T} - 1 + \frac{\rho T}{2} <0$
(it is enough to take $\rho \in (0,1/T)$), then
\begin{equation*}
\wt J_0(0,0,X^{(\nu)}) \to - \infty \quad \text{ as } |\nu| \to \infty .
\end{equation*}
Thus, the cost functional $\wt J$ leads to an ill-posed optimization problem.

\subsection{Solution in our framework}\label{sec:recoveringOW}

In the setting above (see the second paragraph in Section~\ref{sec:fromsec52offirstversion}),
we recover a well-posed optimization problem when we use the cost functional~\eqref{eq:cost_fct}.
Let us verify that Theorem~\ref{thm:sol_val_fct} applies and present an explicit formula for the optimal strategy in $\cA_t(x,d)$,
for any $t\in[0,T]$, $x,d\in\bbR$.

In this setting, $\bm{\left(C_{>0}\right)}$ and $\bm{\left(C_{[M]}\right)}$ are trivially satisfied. BSDE~\eqref{eq:bsde} takes the form
\begin{equation}\label{eq:ODEmu0sigma0rhoconst}
dY_s  = \rho Y_s^2 ds + Z_sdW_s +dM^\perp_s,\quad  s \in [0,T], \quad 
Y_{T}  =\frac{1}{2}.
\end{equation} 
The solution of \eqref{eq:ODEmu0sigma0rhoconst} is 
\begin{equation}
Z\equiv 0, \quad M^\perp \equiv 0, \quad Y_s = \frac{1}{2+(T-s)\rho}, \quad  s \in [0,T].
\end{equation}
Observe that $\wt{\beta}=Y$ (see \eqref{eq:def_beta_tilde} for the definition of $\wt{\beta}$) in this setting and that $Y$ is deterministic, increasing, continuous and $(0,1/2]$-valued. 
In particular, $\wt{\beta}$ is bounded, and it is a semimartingale.
Hence, Theorem~\ref{thm:sol_val_fct} applies, and there exists a unique optimal strategy $X^*=(X^*_s)_{s\in[t,T]}\in\cA_t(x,d)$, which is given by the formulas
\begin{equation}\label{eq:optstratobiwang}
\begin{split}
X^*_{t-}&=x, \quad X_T^*=0,\\
X^*_s & = \left(x-\frac{d}{\gamma}\right)
\exp\left\{ - \int_t^s  \frac{\rho}{2+ (T-r)\rho}\,dr \right\}
\frac{1+ (T-s)\rho}{2+(T-s)\rho}
\\
& =  \left(x-\frac{d}{\gamma}\right)\frac{1+ (T-s) \rho }{2+ (T-t) \rho}, \quad s\in [t,T).
\end{split}
\end{equation}
In the context of optimal trade execution in a limit order book model,
this setting is considered in the pioneering work \cite{obizhaeva2013optimal},
and the optimal strategy $X^*$ of~\eqref{eq:optstratobiwang} (for $d=0$)
appears in Proposition~3 of \cite{obizhaeva2013optimal},
where the cost functional $\wt J$ of~\eqref{eq:cost_fct_tilde} is minimized over the strategies of finite variation.
We stress again that we obtain optimality of~\eqref{eq:optstratobiwang} in this setting as a result of a different optimization problem
(minimization of the cost functional $J$ of~\eqref{eq:cost_fct} over c\`adl\`ag semimartingales).

Notice that the optimal strategy $X^*$ of~\eqref{eq:optstratobiwang}
is deterministic, has jumps at times $t$ and $T$ (i.e., block trades in the beginning and in the end)
and is continuous on $(t,T)$.
It is worth noting that the associated deviation process
$D^*$ is constant on $(t,T)$
(but, clearly, has jumps at times $t$ and $T$).
In the case $d=0$ the strategy $X^*$ is monotone.
In general, the strategy is monotone only on $(t,T]$.
Global monotonicity can fail because of the block trade in the beginning
(the size of the block trade depends not only on $x$ but also on~$d$).

\section{Reason for adjusting the dynamics of the deviation process}\label{sec:reason_dynamics_of_deviation}

\subsection{Counterexample}\label{sec:fromsec81offirstversion}

In this subsection 
we illustrate that the covariation term $[\gamma,X]$ in the definition of the deviation process $D$ (see~\eqref{eq:deviation_dyn}) can be necessary to obtain a well-posed optimization problem. 
More precisely, we construct an example, where $\essinf_{X \in \cA_t(x,d)} J_t(x,d,X)=-\infty$ when the deviation process $D$ associated to $X \in \mathcal{A}_t(x,d)$ follows the dynamics
\begin{equation}\label{eq:05052020a1}
dD_s = - \rho_s D_{s}\,d[M]_s + \gamma_s\,dX_s,\quad s\in[t,T]
\end{equation}
(cf.\ \eqref{eq:13022020a1_intro} above or formula~(2) in \cite{fruth2019optimal}).

Consider the setting, where $M=W$
is an $(\cF_s)$-Brownian motion,
$\mu\equiv0$,
$\sigma>0$ and $\rho>0$ are positive deterministic constants
such that $2\rho-\sigma^2>0$, i.e., $\bm{\left(C_{>0}\right)}$ holds.
In particular, in our current setting the price impact process
$\gamma$ is a geometric Brownian motion
$\gamma_s=\gamma_0\exp\{\sigma W_s-\frac{\sigma^2}2 s\}$,
$s\in[0,T]$.

We consider the starting time $t=0$ and assume (for simplicity) that the $\sigma$-field $\cF_0$ is trivial.
We further fix some initial position $x\in\bbR\setminus\{0\}$
and the initial deviation $d=0$.
For $\nu \in \bbR$ define the execution strategy $(X^{(\nu)}_s)_{s\in[0,T]}$ by 
$X^{(\nu)}_{0-}=X^{(\nu)}_0=x$, 
$dX^{(\nu)}_s = \nu X^{(\nu)}_s dW_s$ for $s\in[0,T)$ and  
$X^{(\nu)}_T = 0$,
i.e., the strategy follows a geometric Brownian motion on $[0,T)$
and has a block trade at time $T$.
For each $\nu \in \bbR$, let $D^{(\nu)}=( D_s^{(\nu)} )_{s\in[0,T]}$ be the deviation process associated to $X^{(\nu)}$
according to dynamics~\eqref{eq:05052020a1}, which is 
\begin{equation*}
	\begin{split}
		& dD^{(\nu)}_s 
		= -\rho_s D^{(\nu)}_{s}ds+\gamma_sdX^{(\nu)}_s 
		= -\rho D^{(\nu)}_s ds + \nu \gamma_s X^{(\nu)}_sdW_s 
		, \quad s\in[0,T),\\
		& D^{(\nu)}_{0-}=0, \quad D^{(\nu)}_{T} = D^{(\nu)}_{T-} - \gamma_T X^{(\nu)}_{T-} .
	\end{split}
\end{equation*} 
In particular,
$D^{(\nu)}_s = \int_0^s \nu e^{-\rho(s-r)} \gamma_r X^{(\nu)}_r\,dW_r$ for $s\in[0,T)$.

We first verify that $X^{(\nu)}\in \mathcal{A}_0(x,0)$ for all $\nu\in\bbR$.
Notice that in the current setting we have for all $p\in[1,\infty)$ and $\nu\in\bbR$
\begin{equation}\label{eq:exa_gamma_alpha_X_finite_sup}
E\left[ \sup_{s\in [0,T]} \gamma_s^p \right] <\infty, \quad 
E\left[ \sup_{s\in [0,T]} \alpha_s^p \right] <\infty, \text{ and } 
E\left[ \sup_{s\in [0,T]} |X^{(\nu)}_s|^p \right] <\infty.
\end{equation}
(see, e.g., Lemma~\ref{lem:expectation_sup_N_finite}). 
This, the Burkholder-Davis-Gundy inequality and the H{\"o}lder inequality imply that it holds for all $p\in[2,\infty)$ and $\nu \in \bbR$ that there exists $c\in [1,\infty)$ such that 
\begin{equation*}
\begin{split}
E\left[ \sup_{s\in [0,T)} |D^{(\nu)}_s|^p \right] 
& \leq
c E\left[ \left( \int_0^T \nu^2 e^{-2\rho(T-r)} \gamma_r^2 \left(X^{(\nu)}_r\right)^2\,dr \right)^{p/2}\,\right]
\\
& \leq
c |\nu|^p T^{p/2} E\left[ \sup_{r\in [0,T]} \gamma_r^p |X^{(\nu)}_r|^p \right] 
< \infty.
\end{split}
\end{equation*}
Furthermore, as $D^{(\nu)}_T=D^{(\nu)}_{T-}+\gamma_T\Delta X^{(\nu)}_T=D^{(\nu)}_{T-}-\gamma_T X^{(\nu)}_{T-}$,
we also get $D^{(\nu)}_T\in L^p$, hence
\begin{equation}\label{eq:exa_D_finite_sup}
E\left[ \sup_{s\in [0,T]} |D^{(\nu)}_s|^p \right]<\infty
\end{equation}
for all $p\in[1,\infty)$ and $\nu \in \bbR$.
It now follows from the H{\"o}lder inequality, the Minkowski inequality, \eqref{eq:exa_gamma_alpha_X_finite_sup} and  \eqref{eq:exa_D_finite_sup} that (A1) is satisfied. 
Since $\sigma^2$ is a deterministic constant, (A2) then also holds true. 
Furthermore, the H{\"o}lder inequality, \eqref{eq:exa_gamma_alpha_X_finite_sup} and \eqref{eq:exa_D_finite_sup} prove that (A3) is satisfied. 
Hence, it holds $X^{(\nu)}\in \mathcal{A}_0(x,0)$ for all $\nu \in \bbR$.

We next consider the cost functional $J$ defined by~\eqref{eq:cost_fct} and obtain, for any $\nu\in\bbR$,
\begin{equation}\label{eq:costfctinexadynamicsD}
\begin{split}
& J_0(x,0,X^{(\nu)}) 
= E\left[ \int_{[0,T)} D^{(\nu)}_{s-}dX^{(\nu)}_s + D^{(\nu)}_{T-} \Delta X^{(\nu)}_{T} + \int_{[0,T)} \frac{\gamma_s}{2} \nu^2 (X^{(\nu)}_s)^2 ds + \frac{\gamma_T}{2} \left(\Delta  X^{(\nu)}_{T}\right)^2 \right] \\
&   = \nu E\left[ \int_0^T D^{(\nu)}_s X^{(\nu)}_s dW_s \right] - E\left[ D^{(\nu)}_{T-} X^{(\nu)}_{T-} \right] + \frac{\nu^2}{2} \int_0^T E\left[ \gamma_s (X^{(\nu)}_s)^2 \right] ds + \frac12 E\left[ \gamma_{T-} (X^{(\nu)}_{T-})^2 \right] .
\end{split}
\end{equation}
By the Burkholder-Davis-Gundy inequality, the H{\"o}lder inequality, \eqref{eq:exa_gamma_alpha_X_finite_sup} and \eqref{eq:exa_D_finite_sup}, the stochastic integral $\int_0^{\cdot} D_s^{(\nu)} X^{(\nu)}_s dW_s$ is a martingale,
hence its expectation vanishes.
Further, it holds that 
$d(X^{(\nu)}_s)^2  = 2\nu (X^{(\nu)}_s)^2 dW_s + \nu^2  (X^{(\nu)}_s)^2 ds,$ 
$s \in [0,T)$. 
This yields that 
\begin{equation*}
d\left( \gamma_s (X^{(\nu)}_s)^2 \right) 
= \gamma_s (X^{(\nu)}_s)^2\left( (\nu^2 +2\sigma \nu) ds + (2\nu + \sigma) dW_s \right), \quad s \in [0,T), 
\end{equation*}
and hence 
\begin{equation}\label{eq:expectationgammaX2}
E\left[ \gamma_s (X^{(\nu)}_s)^2 \right] = \gamma_0 x^2 e^{(\nu^2 +2\sigma \nu)s}, \quad s\in[0,T).
\end{equation}
Besides this, we have that for all $s\in[0,T)$
\begin{equation}\label{eq:DXproddiffrepresentation}
d\left( D^{(\nu)}_s X^{(\nu)}_s \right) 
= - \rho D^{(\nu)}_s X^{(\nu)}_s ds  + \nu \gamma_s (X^{(\nu)}_s)^2 dW_s + \nu D^{(\nu)}_s X^{(\nu)}_s dW_s + \nu^2 \gamma_s (X^{(\nu)}_s)^2 ds.
\end{equation}
Again by the Burkholder-Davis-Gundy inequality, the H{\"o}lder inequality, \eqref{eq:exa_gamma_alpha_X_finite_sup} and \eqref{eq:exa_D_finite_sup}, one can show that $\int_0^{\cdot} \gamma_s (X^{(\nu)}_s)^2 dW_s$ is a martingale. 
Therefore, it follows from
\eqref{eq:expectationgammaX2}
and~\eqref{eq:DXproddiffrepresentation}
that for all $\nu \in \bbR\setminus\{0\}$ and $s\in[0,T)$
\begin{equation}\label{eq:expectationDXprod}
\begin{split}
E\left[ D^{(\nu)}_s X^{(\nu)}_s \right] 
& = -\rho \int_0^s E\left[ D^{(\nu)}_u X^{(\nu)}_u \right] du 
+ \nu^2 \int_0^s E\left[ \gamma_u (X^{(\nu)}_u)^2 \right] du \\
& = -\rho \int_0^s E\left[ D^{(\nu)}_u X^{(\nu)}_u \right] du 
+ \frac{\nu^2 \gamma_0 x^2 }{ \nu^2 + 2\sigma \nu } \left( e^{(\nu^2+2\sigma\nu)s}-1\right) .
\end{split}
\end{equation}
We thus obtain that 
\begin{equation*}
E\left[ D^{(\nu)}_s X^{(\nu)}_s \right] 
= e^{-\rho s} \frac{\nu^2 \gamma_0 x^2 }{ \rho + \nu^2 + 2\sigma \nu } \left( e^{(\rho + \nu^2+2\sigma\nu)s}-1\right) , \quad s\in [0,T).
\end{equation*}
Together with \eqref{eq:expectationgammaX2} the cost functional \eqref{eq:costfctinexadynamicsD} becomes 
\begin{equation*}
\begin{split}
& J_0(x,0,X^{(\nu)}) \\
& = - e^{-\rho T} \frac{\nu^2 \gamma_0 x^2 }{ \rho + \nu^2 + 2\sigma \nu } \left( e^{(\rho + \nu^2+2\sigma\nu)T}-1\right) 
+ \frac{\nu^2}{2} \int_0^T \gamma_0 x^2 e^{(\nu^2 + 2\sigma \nu)s} ds 
+ \frac{\gamma_0 x^2}{2} e^{(\nu^2 + 2\sigma\nu)T}  \\
& = \frac{\gamma_0 x^2}{2} \Bigg( e^{(\nu^2 + 2\sigma \nu)T} \left( \frac{\nu^2}{\nu^2+2\sigma\nu} -\frac{2\nu^2}{\rho + \nu^2+2\sigma\nu} + 1 \right) 
- \left( \frac{\nu^2}{\nu^2+2\sigma\nu} - \frac{2\nu^2 e^{-\rho T}}{\rho + \nu^2+2\sigma\nu} \right) \Bigg) \\
& = \frac{\gamma_0 x^2}{2} \left( I_1(\nu) - I_2(\nu) \right),
\end{split}
\end{equation*}
where
$I_1(\nu)=e^{(\nu^2 + 2\sigma \nu)T} \left( \frac{\nu^2}{\nu^2+2\sigma\nu} -\frac{2\nu^2}{\rho + \nu^2+2\sigma\nu} + 1 \right)$ and $I_2(\nu)=\frac{\nu^2}{\nu^2+2\sigma\nu} - \frac{2\nu^2 e^{-\rho T}}{\rho + \nu^2+2\sigma\nu}$.
Observe that 
\begin{equation}\label{eq:factorincostfctexadynamicsD}
\begin{split}
\frac{\nu^2}{\nu^2+2\sigma\nu} -\frac{2\nu^2}{\rho + \nu^2+2\sigma\nu} + 1 
& = \frac{1}{1+2\frac{\sigma}{\nu}} - \frac{2}{\frac{\rho}{\nu^2} +1 + 2\frac{\sigma}{\nu}} + 1 \\ 
& = \frac{2}{\nu}\frac{\sigma + \frac{\rho}{\nu} + \frac{\rho\sigma}{\nu^2}+2\frac{\sigma^2}{\nu}}{\left( 1+2\frac{\sigma}{\nu} \right) \left( \frac{\rho}{\nu^2} +1+2\frac{\sigma}{\nu} \right)} ,
\end{split}
\end{equation}
i.e., this term behaves as $\frac{2\sigma}{\nu}$ in the limit $\nu\to-\infty$
(in particular, this term is strictly negative provided $\nu<0$ and $|\nu|$ is sufficiently large).
It follows that $I_1(\nu)\to-\infty$ as $\nu\to-\infty$,
whereas, clearly, $I_2(\nu)\to1-2e^{-\rho T}$ as $\nu\to-\infty$,
hence
\begin{equation*}
J_0(x,0,X^{(\nu)})  \to -\infty \quad \text{ as } \nu \to -\infty .
\end{equation*}
Thus, dynamics~\eqref{eq:05052020a1} leads to an ill-posed optimization problem.

\subsection{Solution in our framework}\label{sec:examplewithLambertWfct}

In the setting above (see the second paragraph in Section~\ref{sec:fromsec81offirstversion}),
we recover a well-posed optimization problem
when we use dynamics~\eqref{eq:deviation_dyn}
instead of~\eqref{eq:05052020a1}
for the deviation process $D$.
Let us verify that Theorem~\ref{thm:sol_val_fct} applies
and present explicit formulas for the optimal strategy $X^*$
in $\cA_t(x,d)$ and the associated deviation process $D^*$,
for any $t\in[0,T]$, $x,d\in\bbR$.

In this setting, $\bm{\left(C_{[M]}\right)}$ is trivially satisfied, while $\bm{\left(C_{>0}\right)}$ holds true due to our assumption $2\rho-\sigma^2>0$. BSDE~\eqref{eq:bsde} takes the form 
\begin{equation}\label{eq:odeforspecialdetermincase}
	dY_s  = \left( \frac{(\rho Y_s + \sigma Z_s)^2}{\sigma^2 Y_s+ \rho - \frac{\sigma^2}{2}} -\sigma Z_s \right)ds + Z_sdW_s + dM^\perp_s ,\quad  s \in [0,T], \quad 
	Y_{T}  =\frac{1}{2},
\end{equation}
and has a deterministic solution
\begin{equation}\label{eq:solforspecialdetermincase}
Z \equiv 0, \quad
M^\perp \equiv 0, \quad
Y_s = \frac{\rho-\frac{\sigma^2}{2}}{\sigma^2}\,
\mathcal{W}\left(  \frac{\rho-\frac{\sigma^2}{2}}{\sigma^2} e^{\kappa - \frac{\rho^2}{\sigma^2}s} \right)^{-1}, \quad s \in [0,T],
\end{equation}
where $\mathcal{W}$ denotes the Lambert $W$ function and 
$\kappa = \log(2) + \frac{1}{\sigma^2} (2\rho - \sigma^2 + \rho^2 T) .$ 
Observe that in this setting
$$
\wt{\beta}_s = \frac{\rho Y_s}{\sigma^2 Y_s + \rho - \frac{\sigma^2}{2}}, \quad s \in [0,T],
$$
and that both $Y$ and $\wt\beta$
are deterministic increasing continuous $(0,1/2]$-valued functions.
In particular, $\wt\beta$ is bounded and it is a semimartingale.
Hence, Theorem~\ref{thm:sol_val_fct} applies,
and, for all $t\in[0,T]$, $x,d\in\bbR$,
the unique optimal strategy
$X^*=(X^*_s)_{s\in[t,T]}\in\cA_t(x,d)$
and its associated deviation process
$D^*=(D^*_s)_{s\in[t,T]}$
are given by the formulas
$X^*_{t-}=x$, $D^*_{t-}=d$,
\begin{align*}
X^*_s
&=
\left(x-\frac{d}{\gamma_t}\right)\cE(Q)_{t,s}\,(1-\wt\beta_s),
\quad s\in [t,T),
\\
D^*_s
&=
\left(x-\frac{d}{\gamma_t}\right)\cE(Q)_{t,s}\,(-\gamma_s\wt\beta_s),
\quad s\in [t,T),
\end{align*}
and $X^*_T=0$,
$D^*_T=\left(x-\frac{d}{\gamma_t}\right)\cE(Q)_{t,T}\,(-\gamma_T)$,
where 
\begin{equation*}
Q_s = - \sigma \int_0^s \wt{\beta}_r dW_r - (\rho-\sigma^2) \int_0^s \wt{\beta}_r dr, \quad s\in [0,T].
\end{equation*}

We, finally, discuss some properties of the optimal strategy in the case $x\ne\frac d{\gamma_t}$
(there is nothing to discuss in the remaining case $x=\frac d{\gamma_t}$).
Like in the situation of Section~\ref{sec:recoveringOW},
$X^*$ has jumps at times $t$ and $T$ and is continuous on $(t,T)$.
As $1-\wt\beta$ is positive,
here, again, $X^*$ has the same sign as $x-\frac d{\gamma_t}$ on $(t,T]$.
Now in contrast to Section~\ref{sec:recoveringOW},
the associated deviation process $D^*$
is no longer constant on $(t,T)$. Further,
as $1-\wt\beta$ is nonvanishing and has finite variation on $[t,T]$,
while $\cE(Q)_{t,\cdot}$, almost surely, has infinite variation on all subintervals of $[t,T]$,
we get that, in contrast to Section~\ref{sec:recoveringOW},
$X^*$, almost surely, has infinite variation on all subintervals of $[t,T]$
(in particular, $X^*$ is in no way monotone on any subinterval). 
See Figure~\ref{fig:optStratLambertWDouble} for an illustration.

\begin{figure}[!htb]
	\includegraphics[scale=0.52]{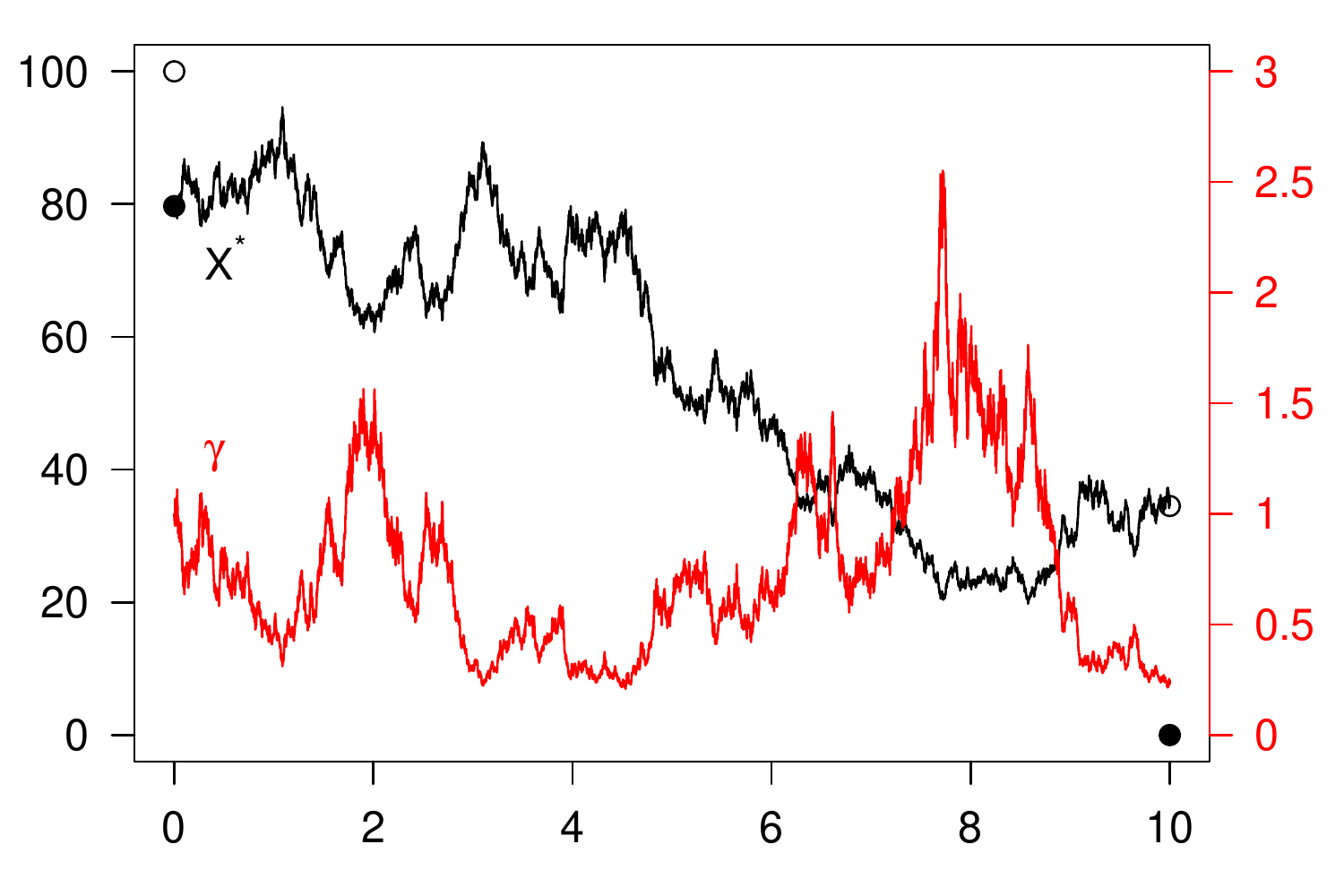}
	\includegraphics[scale=0.52]{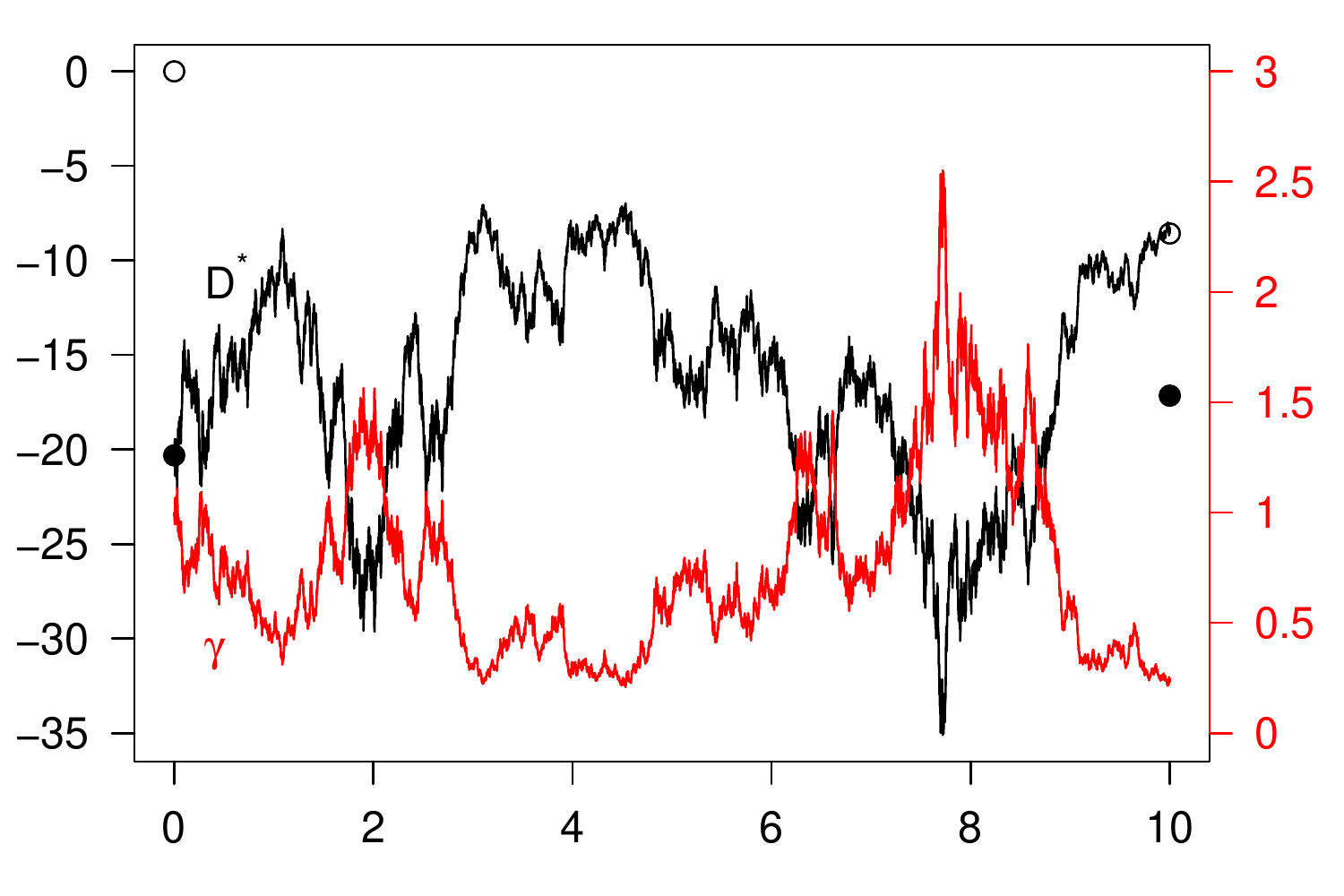}
	\caption{Left: A simulation of the optimal strategy $X^*$ (black) and the price impact $\gamma$ (red) in the setting of Section~\ref{sec:examplewithLambertWfct} for $T=10$, $x=100$, $d=0$, $\gamma_0=1$, $\rho=0.5$ and $\sigma=0.8$. Note the difference in scales. Right: The associated deviation process $D^*$ (black) and the price impact $\gamma$ (red) for the same situation.}
	\label{fig:optStratLambertWDouble}
\end{figure}

\section{Examples}\label{sec:furtherexamples}

In this section, we present several interesting qualitative effects that can arise in our framework. 
To do this, we consider several subsettings of the following common set-up: 

Let $M=W$ be a Brownian motion and $\cF_s=\cF_s^W$ for all $s\in[0,T]$. Let $t=0$, $x,d \in \R$ with $x\neq \frac{d}{\gamma_0}$ (for the case $x=\frac{d}{\gamma_0}$, see Lemma~\ref{lem:optstratifzero}). 
The resilience here is taken to be a deterministic constant $\rho \in \R\setminus\{0\}$ (for the case $\rho = 0$, see Proposition~\ref{propo:withoutresiliencecloseimmediately}). 
We consider the price impact $\gamma$ from~\eqref{eq:dyn_gamma} with $\sigma\equiv 0$, i.e., 
\begin{equation*}
\gamma_s = \gamma_0 \exp\left( \int_0^s \mu_r dr \right), \quad s\in [0,T].
\end{equation*}
In particular, $\gamma$ is continuous and of finite variation. 
We assume that there exist deterministic constants $\eps,\ol\mu\in(0,\infty)$ such that
\begin{equation}\label{eq:08032020a1}
2\rho+\mu\ge\eps\;\;\cD_W\text{-a.e.}
\quad\text{and}\quad
\mu\le\ol\mu\;\;\cD_W\text{-a.e.}
\end{equation}
(in particular, $\bm{\left(C_{>0}\right)}$ is satisfied).
Our current set-up is a special case of the settings considered in Sections \ref{sec:bsde1} and~\ref{sec:bsde2} below.
Therefore, it follows from Proposition~\ref{propo:existence_bsde_general_filtration}
(alternatively, from Proposition~\ref{propo:existence_bsde_continuous_filtration})
that $\bm{\left(C_{\textbf{BSDE}}\right)}$ is satisfied.

As before, $(Y,Z,M^\perp)$ denotes a solution to BSDE~\eqref{eq:bsde} satisfying the requirements in $\bm{\left(C_{\textbf{BSDE}}\right)}$.
We notice that $M^\perp\equiv0$ in our current set-up because,
due to the martingale representation theorem (Theorem~V.3.4 in \cite{revuzyor}),
on the Brownian filtration any local martingale $M^\perp$ with $M^\perp_0=0$ and $[M^\perp,W]=0$ is indistinguishable from zero.
For the process $\wt{\beta}$ defined in~\eqref{eq:def_beta_tilde} we obtain 
\begin{equation}\label{eq:betatildespecialcaseBrownianmotionsigmazero}
\wt{\beta}_s = \frac{\rho+\mu_s}{2\rho+\mu_s}\,2Y_s
=\left(1-\frac\rho{2\rho+\mu_s}\right)2Y_s
, \quad s \in [0,T].
\end{equation}
Notice that, by~\eqref{eq:08032020a1}, $\wt\beta$ is bounded.
That is, in our current set-up, including~\eqref{eq:08032020a1}, the assumptions of Theorem~\ref{thm:sol_val_fct} are satisfied.

What varies between the examples in this section is the choice of $\mu$, i.e., the price impact process $\gamma$.
In the examples below, we distinguish
between the following two situations.

\smallskip\noindent
\emph{Situation~1:}
There exists a c\`adl\`ag semimartingale
$\beta=(\beta_s)_{s\in[0,T]}$ such that
\begin{equation}\label{eq:30042020a1}
\wt\beta=\beta\quad\cD_W\text{-a.e.}
\end{equation}

\noindent
\emph{Situation~2:}
There is no c\`adl\`ag semimartingale $\beta$ such that \eqref{eq:30042020a1} is satisfied.

\smallskip\noindent
As we know from Theorem~\ref{thm:sol_val_fct},
in Situation~1 there exists a unique
(up to $\cD_W$-null sets)
optimal strategy $X^*=(X^*_s)_{s\in[0,T]}\in\cA_0(x,d)$,
and it is given by the formulas
$X^*_{0-}=x$, $X^*_T=0$ and
\begin{equation}\label{eq:30042020a2}
X^*_s=\left(x-\frac{d}{\gamma_0}\right)
\exp\left\{-\int_0^s  \beta_r (\mu_r + \rho)\,dr\right\}
(1-\beta_s),\quad s\in [0,T),
\end{equation}
while in Situation~2 there does not exist an optimal strategy.

Before turning to specific examples we notice that the multiplier
$$
\left(x-\frac{d}{\gamma_0}\right)
\exp\left\{ - \int_0^s  \beta_r (\mu_r + \rho)\,dr \right\},\quad s\in[0,T],
$$
in~\eqref{eq:30042020a2} is a nonvanishing continuous process of finite variation.
Therefore, if, in Situation~1, we want to obtain the optimal strategy
$X^*$ of infinite variation on $[0,T]$
and/or with jumps inside $(0,T)$,
it is enough to construct $\beta$
(see \eqref{eq:betatildespecialcaseBrownianmotionsigmazero}
and~\eqref{eq:30042020a1})
of infinite variation on $[0,T]$
and/or with jumps inside $(0,T)$.

\begin{ex}\label{ex:opt_strat_of_infinite_variation}
Let $\mu$ be a continuous process of finite variation satisfying~\eqref{eq:08032020a1} such that
\begin{equation}\label{eq:08032020a3}
\text{a.s.\ the function }s\mapsto\rho+\mu_s\text{ is nonvanishing on }[0,T].
\end{equation}
Observe that for a fixed $\omega \in \Omega$, the unique solution to the Bernoulli ODE
	\begin{equation*}
	d\ol Y_s(\omega) = \left( \frac{2\left(\rho+\mu_s(\omega)\right)^2 \ol Y_s(\omega)^2}{2\rho + \mu_s(\omega)} -\mu_s(\omega)\ol Y_s(\omega) \right) ds,\quad  s \in [0,T], \quad 
	\ol Y_{T}(\omega)  =\frac{1}{2},
	\end{equation*}
which is BSDE~\eqref{eq:bsde} without the martingale part, is given by the formula
\begin{equation}\label{eq:02052020a2}
	\ol Y_s(\omega) =  e^{\int_s^T \mu_r(\omega) dr} \left( \int_s^T \frac{2\left(\rho+\mu_r(\omega)\right)^2}{2\rho + \mu_r(\omega)} e^{\int_r^T \mu_u(\omega) du} dr + 2  \right)^{-1}, \quad s \in [0,T].
\end{equation}
It follows that it is possible to choose $\mu$ such that $\ol Y$
is not adapted.
Choosing $\mu$ in such a way we conclude that the solution
$(Y,Z,M^\perp\equiv0)$ of BSDE~\eqref{eq:bsde}
satisfies $\PdW(Z\neq 0) > 0$.
This yields that, with positive probability, $Y$ has infinite variation on $[0,T]$.
Define $\varphi_s=\frac{2(\rho + \mu_s)}{2\rho + \mu_s}$, $s\in[0,T]$, which is a nonvanishing (recall~\eqref{eq:08032020a3})
continuous process of finite variation.
Hence, $\wt{\beta} =  \varphi Y$ is a continuous semimartingale that, with positive probability, has infinite variation on $[0,T]$.
Thus, we are in Situation~1 with $\beta\equiv\wt\beta$,
and the optimal strategy $X^*$,
which is given by~\eqref{eq:30042020a2},
has, with positive probability, infinite variation on $[0,T]$.

In contrast to the situation in Section~\ref{sec:examplewithLambertWfct},
where the infinite variation in $X^*$ was caused by the infinite variation in the exogenous process $\gamma$, in this example all exogenous processes (i.e., $\gamma$ and $\rho$) 
have finite variation\footnote{As easily seen, it is even possible to choose $\mu$ with $C^\infty$ paths. Then $\gamma$ also has $C^\infty$ paths. The process $\rho$ is even constant.}, but the optimal strategy has infinite variation,
i.e., ``oscillates much more quickly'' than the exogenous processes do.
This is due to the incoming information that is reflected in the endogenous process~$Y$ (which turns out to have infinite variation).
\end{ex}

\begin{ex}\label{ex:opt_strat_jumps_in_between}
Optimal strategies we have seen so far have jumps
(block trades) at times $0$ and $T$ only.
In order to construct an optimal strategy with jumps inside $(0,T)$
it is enough to take
$$
\text{a c\`adl\`ag semimartingale }\mu\text{ satisfying~\eqref{eq:08032020a1} that exhibits jumps in }(0,T),
$$
i.e., with positive probability,
$\{s\in(0,T):\Delta\mu_s\ne0\}\ne\emptyset$, such that
\begin{equation}\label{eq:30042020a3}
\text{the corresponding process }Y\text{ is nonvanishing.}
\end{equation}
Indeed, in this case, $\wt\beta$ is a c\`adl\`ag semimartingale,
so we are in Situation~1 with $\beta\equiv\wt\beta$.
Moreover, as $Y$ is continuous and nonvanishing,
we readily see
from~\eqref{eq:betatildespecialcaseBrownianmotionsigmazero} that
$$
\Delta\mu_s\ne0
\;\;\Longleftrightarrow\;\;
\Delta\wt\beta_s\ne0,
$$
hence the optimal strategy $X^*$,
which is given by~\eqref{eq:30042020a2},
contains block trades inside $(0,T)$.

\smallskip
To show a specific example of this kind,
we consider, for some $t_0 \in (0,T)$,
a deterministic $\mu$ given by the formula
$\mu_s=1_{[t_0,T]}(s)$, $s\in[0,T]$.
Observe that \eqref{eq:08032020a1} is satisfied whenever $\rho>0$,
so we take some $\rho>0$ in this example.
BSDE~\eqref{eq:bsde} here takes the form
	\begin{equation*}
	\begin{split}
	dY_s & = \rho Y_s^2 ds + Z_sdW_s + dM^\perp_s, \quad s\in [0,t_0], \\
	dY_s & = \left( \frac{2(\rho+1)^2 Y_s^2}{2\rho + 1} - Y_s \right) ds +Z_sdW_s + dM^\perp_s,\quad  s \in [t_0,T], \quad 
	Y_{T}  =\frac{1}{2},
	\end{split}
	\end{equation*} 
and has a deterministic solution $(Y,Z\equiv0,M^\perp\equiv0)$ given by
	\begin{equation}\label{eq:29042020a1}
	\begin{split}
	Y_s & 
	= (2\rho +1)
	\left( 2(\rho +1)^2 - 2\rho^2 e^{s-T} \right)^{-1}
	, \quad s \in [t_0,T],	\\
	Y_s & = \frac{1}{Y_{t_0}^{-1} + (t_0 - s) \rho }, \quad s \in [0,t_0).
	\end{split}
	\end{equation}
Notice that $Y$ is continuous, strictly increasing and $(0,1/2]$-valued.
In particular, \eqref{eq:30042020a3} is satisfied, and what is stated after~\eqref{eq:30042020a3} applies.
Observe that, in this specific example,
\begin{equation}\label{eq:29042020a2}
\beta_s=\begin{cases}
Y_s, & s \in [0,t_0),\\
Y_s\left(1+\frac1{2\rho+1}\right),&s\in[t_0,T],
\end{cases}
\end{equation}
which is a deterministic strictly increasing $(0,1)$-valued c\`adl\`ag function
with the only jump at time $t_0$:
$\Delta\beta_{t_0} = \frac{Y_{t_0}}{2\rho +1}>0$. 
From
\eqref{eq:29042020a1}
and~\eqref{eq:29042020a2} 
we compute that 
\begin{equation}\label{eq:15052020a1}
\exp\left\{ - \int_0^s  \beta_r (\mu_r + \rho)\,dr \right\}  = 
\begin{cases} 
Y_0 Y_s^{-1}, & s \in [0,t_0),\\
e^{t_0-s} Y_0 Y_s^{-1}, & s \in [t_0,T],
\end{cases}
\end{equation}
which, together with
\eqref{eq:29042020a1}
and~\eqref{eq:29042020a2},
provides the optimal strategy in closed form (see~\eqref{eq:30042020a2}). 
However, the qualitative structure of the optimal strategy $X^*$,
in fact, follows from~\eqref{eq:30042020a2}
even without calculating~\eqref{eq:15052020a1}: 

First, $X^*$ is deterministic, and, due to $\beta$ being  strictly increasing and $(0,1)$-valued, $X^*$ is monotone on $(0,T]$.  
Moreover, the facts that $\beta<1$, $\Delta\beta_{t_0}>0$ and $x\neq \frac{d}{\gamma_0}$ together with~\eqref{eq:30042020a2} imply that the optimal strategy necessarily has block trades in the end and at time $t_0$. Their signs are opposite to the sign of $x-\frac d{\gamma_0}$. 
Whether or not $X^*$ has a block trade in the beginning depends on the value of the initial deviation $d$. Likewise, we claim the monotonicity of $X^*$ only on $(0,T]$ because whether or not $X^*$ is monotone on $[0,T]$ also depends on~$d$.\footnote{\label{ft:02042021a1}Namely, $X^*$ has a block trade in the beginning if and only if $x\ne(x-\frac d{\gamma_0})(1-\beta_0)$, i.e., if and only if $d\ne-\frac{\beta_0}{1-\beta_0}\gamma_0x$. Likewise, $X^*$ is monotone on $[0,T]$ if and only if either $x\ge0$, $d\ge-\frac{\beta_0}{1-\beta_0}\gamma_0x$ holds or $x\le0$, $d\le-\frac{\beta_0}{1-\beta_0}\gamma_0x$ holds. (In particular, if $d=0$, then $X^*$ is monotone on $[0,T]$.)}

Between the block trades the associated deviation process $D^*$ is constant: 
It follows from~\eqref{eq:opt_dev_representation}, \eqref{eq:29042020a2} and \eqref{eq:15052020a1} that 
$D_s^* = (d-\gamma_0 x) Y_0$, $s\in [0,t_0)$, and $D_s^* = (d-\gamma_0 x) Y_0 \left(1+\frac{1}{2\rho+1}\right)$, $s\in [t_0,T)$.

Figure~\ref{fig:optStratJump} is an illustration for specific parameter values.

\begin{figure}[!htb]
	\includegraphics[scale=0.52]{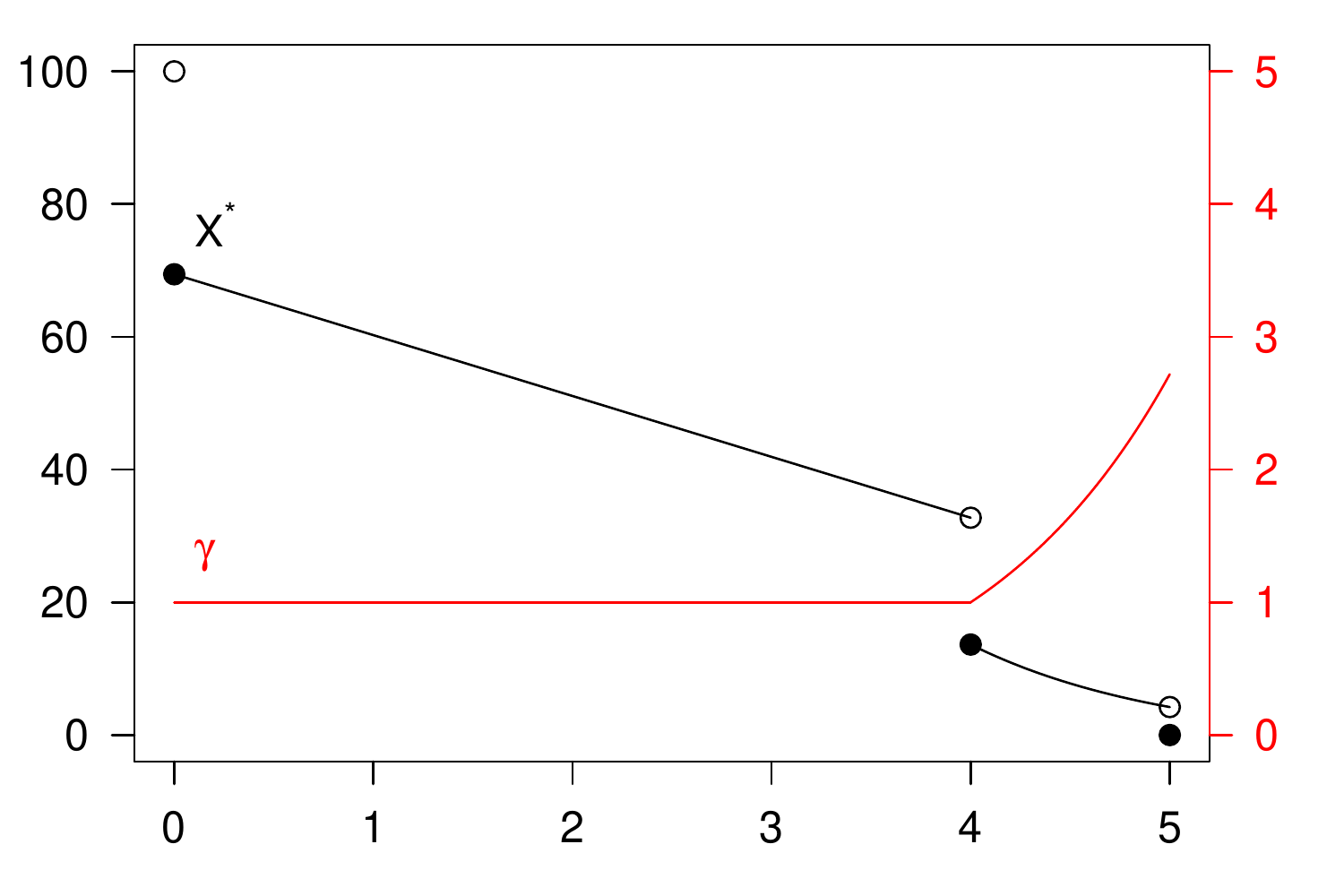}
	\includegraphics[scale=0.52]{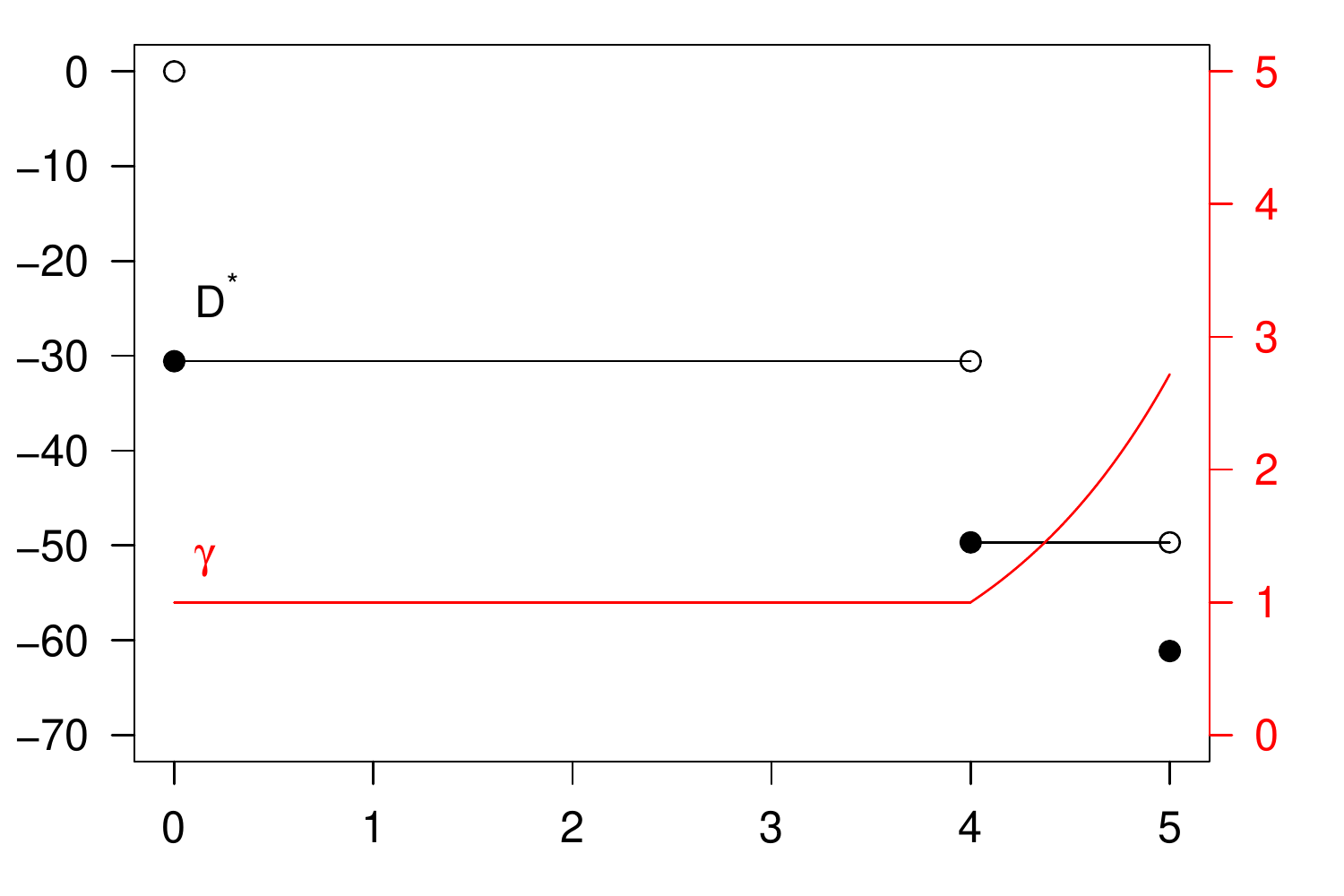}
	\caption{Left: The optimal strategy $X^*$ (black) and the price impact $\gamma$ (red) in the setting of Example~\ref{ex:opt_strat_jumps_in_between} with $\mu_s=1_{[t_0,T]}(s)$, $s\in[0,T]$, and for $T=5$, $x=100$, $d=0$, $\gamma_0=1$, $\rho=0.3$ and $t_0=4$. 
	Note the difference in scales. 
	Right: The associated deviation process $D^*$ (black) and the price impact $\gamma$ (red) for the same situation.}
	\label{fig:optStratJump}
\end{figure}

Observe that the reaction of the optimal strategy to changes in the price impact is rather sensitive:
here only $\mu$ jumps at time $t_0$
(not the price impact $\gamma$ itself),
but this already causes a jump in $X^*$ at time $t_0$.
Finally, it is worth noting that a model with deterministically
time-varying price impact and resilience was considered
in Section~8 of \cite{fruth2014optimal},
but examples of such type are not possible in their framework
because the smoothness assumption in Section~8
of \cite{fruth2014optimal}
excludes the possibility of block trades inside $(0,T)$
(cf.\ Theorem~8.4 in \cite{fruth2014optimal}).
\end{ex}

\begin{ex}\label{ex:neg_res}
In models of price impact that include resilience it is commonly assumed that resilience is positive.
But negative resilience also has a natural interpretation, as it models self-exciting behaviour of the price impact, where trading activities of the large investor stimulate other market participants to trade in the same direction.
In this example we discuss a basic effect of negative resilience in our model.
To this end, we consider some $\rho<0$ and take a deterministic constant $\mu>-2\rho\;(>0)$, which ensures~\eqref{eq:08032020a1}.
Here, again, BSDE~\eqref{eq:bsde} has a deterministic solution
$(Y,Z\equiv0,M^\perp\equiv0)$, which is given by
$$
Y_s=\frac12\mu(2\rho+\mu)\left(
(\rho+\mu)^2-\rho^2e^{\mu(s-T)}
\right)^{-1},\quad s\in[0,T].
$$
It follows that
$$
\wt\beta_s=\mu(\rho+\mu)\left(
(\rho+\mu)^2-\rho^2e^{\mu(s-T)}
\right)^{-1},\quad s\in[0,T],
$$
which is a deterministic positive continuous increasing function, in particular, a semimartingale. Thus, we are in Situation~1 with $\beta\equiv\wt\beta$. 
Notice that
$$
\beta_s>\frac{\mu(\rho+\mu)}{(\rho+\mu)^2}=\frac\mu{\rho+\mu}>1,\quad s\in[0,T],
$$
i.e., in contrast to Example~\ref{ex:opt_strat_jumps_in_between}, $\beta$ is now $(1,\infty)$-valued. 
We set
$$
\lambda_s=\left(x-\frac d{\gamma_0}\right)\exp\left\{-(\rho+\mu)\int_0^s\beta_r\,dr\right\},\quad s\in[0,T],
$$
and have by \eqref{eq:opt_strat_representation}--\eqref{eq:opt_dev_representation} that
\begin{equation}\label{eq:02052020a1}
	X^*_s=\lambda_s(1-\beta_s)
	\quad\text{and}\quad
	D^*_s=-\lambda_s\gamma_s\beta_s,\quad s\in[0,T).
\end{equation}
The fact that $\beta$ is $(1,\infty)$-valued makes the factor $1-\beta$ in~\eqref{eq:02052020a1} negative and means that the optimal strategy $X^*$ is not monotone on $[0,T]$ even for $d=0$ because of the block trade at time $0$
(this is contrary to Example~\ref{ex:opt_strat_jumps_in_between}
and Section~\ref{sec:recoveringOW}, where the optimal strategy is monotone on $[0,T]$, once $d=0$).
Indeed, let, for the moment, $d=0$ and, say, $x>0$ (the objective to \emph{sell} shares). Then, in the first block trade, more than $x$ shares are sold and the sell-program is thus changed into the buy-program. This is done to profit from the negative resilience that drives the deviation process $D^*$ associated to $X^*$ down also after the initial block trade and allows to profit from the subsequent buy-program. 

It can, however, be shown that $X^*$ is monotone on $(0,T]$. To this end, we first prove monotonicity on $(0,T)$: 
BSDE~\eqref{eq:bsde} for $Y$ (just a Bernoulli ODE in this case) and~\eqref{eq:betatildespecialcaseBrownianmotionsigmazero} imply that $\beta$ satisfies the (Bernoulli) ODE
\begin{equation}\label{eq:dotbetanegres}
	\dot\beta_s=(\rho+\mu)\beta_s^2-\mu\beta_s,\quad s\in[0,T].
\end{equation}
Also observe that $\dot\gamma_s=\mu\gamma_s$ and $\dot\lambda_s=-(\rho+\mu)\lambda_s\beta_s$ for all $s\in[0,T]$. It now follows from~\eqref{eq:02052020a1} that
$$
\dot X^*_s=\lambda_s\left(-(\rho+\mu)\beta_s(1-\beta_s)-(\rho+\mu)\beta_s^2+\mu\beta_s\right)=-\rho\lambda_s\beta_s,\quad s\in(0,T),
$$
which has the same sign as $x-\frac d{\gamma_0}$. 
This shows that $X^*$ is monotone on $(0,T)$.  
Further, note that for the final block trade we have
$\Delta X^*_T=-X^*_{T-}=\lambda_T(\beta_T-1)$. Since this also 
has the same sign as $x-\frac d{\gamma_0}$, we conclude that $X^*$ is even monotone on $(0,T]$. 

Moreover, observe that it follows from~\eqref{eq:02052020a1} and~\eqref{eq:dotbetanegres} that
$$
\dot D^*_s=\lambda_s\left((\rho+\mu)\gamma_s\beta_s^2-\mu\gamma_s\beta_s-\gamma_s\left((\rho+\mu)\beta_s^2-\mu\beta_s\right)\right)=0,\quad s\in(0,T),
$$
i.e., the trading is performed in the way that $D^*\equiv\const$ on $(0,T)$. 

The optimal strategy and deviation process for specific parameter values are shown in Figure~\ref{fig:optStratRhoNeg}.

\begin{figure}[!htb]
	\includegraphics[scale=0.52]{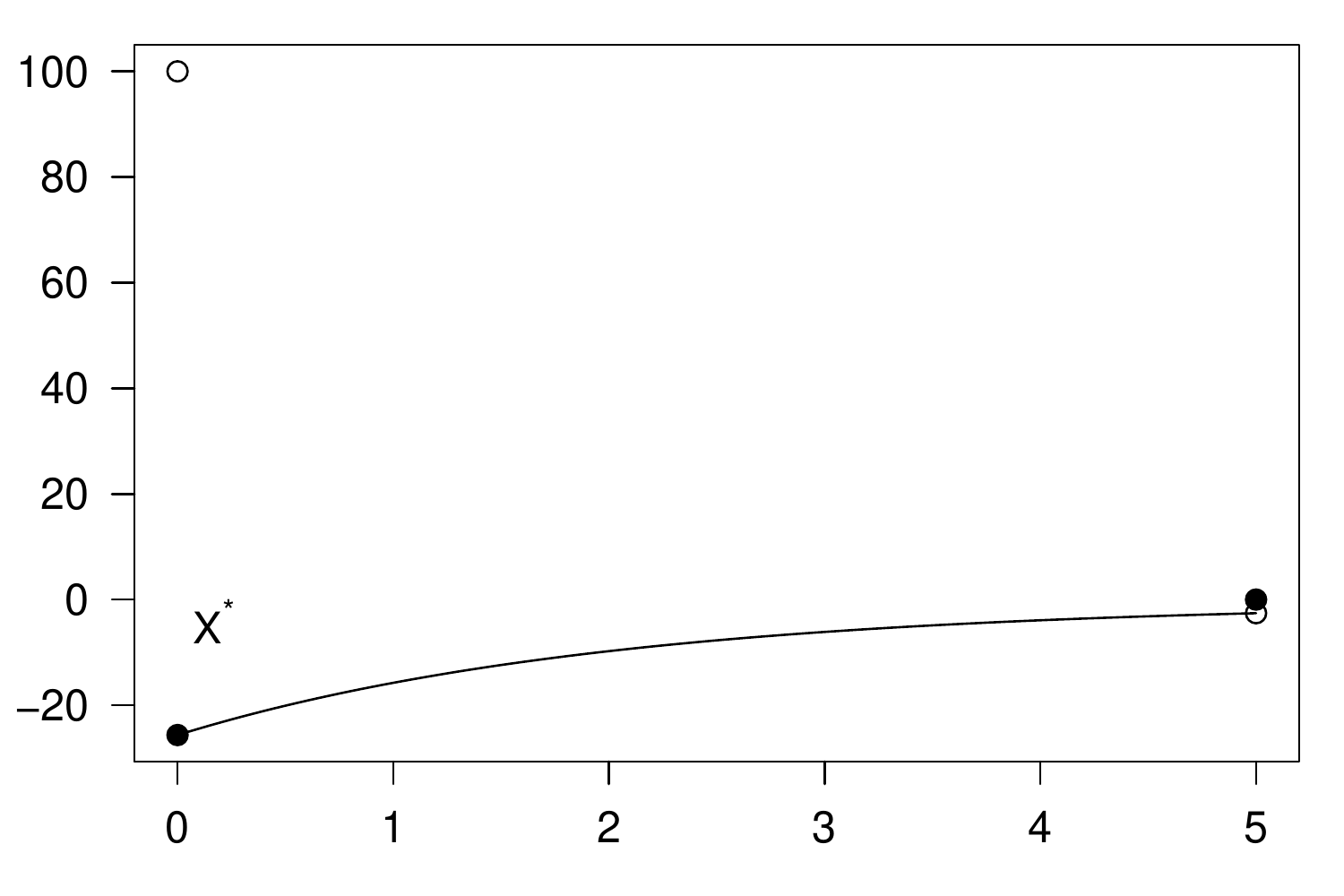} 
	\includegraphics[scale=0.52]{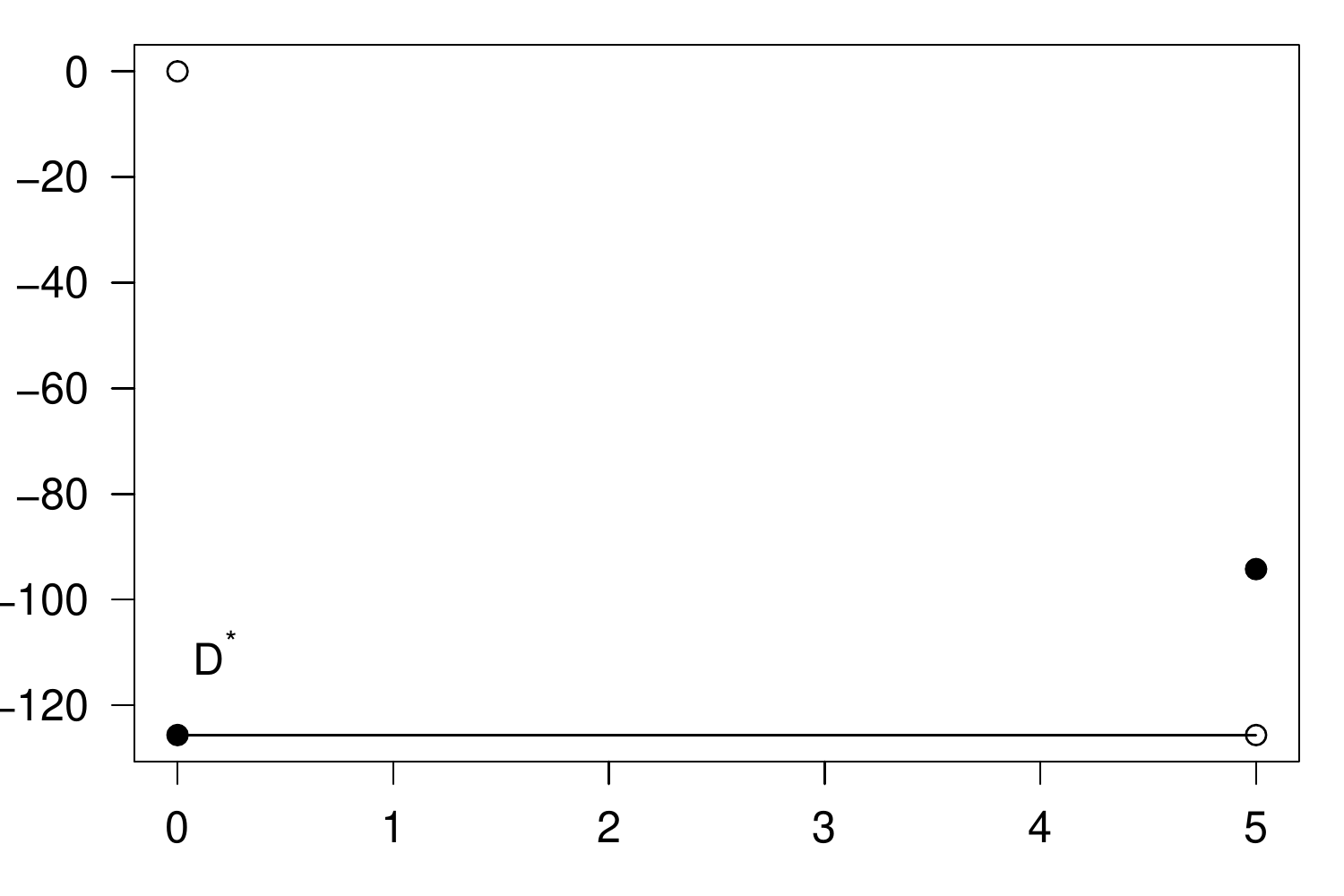} 
	\caption{Left: The optimal strategy $X^*$ in the setting of Example~\ref{ex:neg_res} for  
	$T=5$, $x=100$, $d=0$, $\gamma_0=1$, $\rho=-0.1$ and $\mu=0.5$. To plot $X^*$ we compute that $\lambda_s = (x-d/\gamma_0) e^{-\mu s} Y_s^{-1} Y_0$, $s \in [0,T]$. 
	Right: The associated deviation process $D^*$ for the same situation.}
	\label{fig:optStratRhoNeg}
\end{figure}
\end{ex}

\begin{ex}\label{ex:opt_strat_does_not_exist}
Finally, in order to construct an example of Situation~2 it suffices to take any deterministic c\`adl\`ag function $\mu$ 
such that there exists 
$\delta \in (0,T)$
with $\mu$ having infinite variation on 
$[0,T-\delta]$. 
E.g., one could take $\mu$ to be the Weierstrass function or the function $s\mapsto(s\sin\frac1s)1_{(0,T]}(s)$, $s\in[0,T]$. We also take $\rho\in\bbR\setminus\{0\}$ such that \eqref{eq:08032020a1} is satisfied.

Notice that, in this deterministic framework, the process $Y$ is a deterministic continuous function of finite variation explicitly given by~\eqref{eq:02052020a2}. In particular, $Y$ is nonvanishing.

To formally prove that we are in Situation~2, assume by contradiction that there exists a c\`adl\`ag semimartingale $\beta=(\beta_s)_{s\in [0,T]}$ such that $\wt\beta=\beta$ $\cD_W$-a.e.\ ($\beta$ can be stochastic). Then it follows from~\eqref{eq:betatildespecialcaseBrownianmotionsigmazero} and the fact that $Y$ is nonvanishing that
\begin{equation}\label{eq:02052020a3}
\frac\rho{2\rho+\mu}=1-\frac\beta{2Y}\quad\cD_W\text{-a.e.}
\end{equation}
Set $S=1-\frac\beta{2Y}$ and notice that it is a c\`adl\`ag semimartingale. As both sides in~\eqref{eq:02052020a3} are c\`adl\`ag, they are even indistinguishable on $[0,T)$, i.e., almost surely, it holds
\begin{equation}\label{eq:02052020a4}
\frac\rho{2\rho+\mu_s}=S_s,\quad s\in [0,T),
\end{equation}
hence $S\ne0$ and $S_-\ne0$ on $[0,T)$, which implies that $\frac1S$ is also a semimartingale on $[0,T)$. Now \eqref{eq:02052020a4} yields that, almost surely,
$$
\mu_s=\frac\rho{S_s}-2\rho,\quad s\in [0,T).
$$
Thus, $\mu$ is itself a semimartingale on $[0,T)$. As $\mu$ is deterministic, this means that $\mu$ has finite variation on each compact subinterval of $[0,T)$, in particular, on 
$[0,T-\delta]$.  
The obtained contradiction proves that we are in Situation~2.

This example thus shows that an optimal strategy can fail to exist even when the value function is finite.
\end{ex}

\section{Existence for the BSDE in two subsettings}\label{sec:analysis_of_BSDE}

In this section we establish, in two subsettings, existence of a solution $(Y,Z,M^\perp)$ of BSDE~\eqref{eq:bsde} with driver~\eqref{eq:driver_of_bsde} such that $\bm{\left(C_{\textbf{BSDE}}\right)}$ holds. 

We suppose in both subsettings that the following two conditions are satisfied:
\begin{enumerate}
	\item[$\bm{\left(C_{\geq \varepsilon}\right)}$]  \qquad 
	there exists
	$\varepsilon \in (0,\infty)$: \, $2\rho+\mu-\sigma^2\geq \varepsilon$ $\PdM$-a.e., 
\end{enumerate}
\begin{enumerate}
	\item[$\bm{\left(C_{\text{bdd}}\right)}$]  \qquad 
	there exist 
	 $\ol{\rho}, \ol{\mu} \in (0,\infty)$:\,  $|\rho|\leq \ol{\rho}$, $|\mu|\leq \ol{\mu}$  $\PdM$-a.e.
\end{enumerate}

In the first setting we do not impose restrictions on the filtration but assume $\sigma\equiv0$ in order to meet the Lipschitz condition in some place.

Subsequently, we consider a setting with a general $\sigma$, 
where we assume that $(\cF_t)_{t \in [0,T]}$ is a \emph{continuous filtration} in the sense that any $(\cF_t)_{t \in [0,T]}$-martingale is continuous.
This condition is for example satisfied for a Brownian filtration.

\subsection{General filtration and $\sigma\equiv0$}\label{sec:bsde1}

\begin{propo}\label{propo:existence_bsde_general_filtration}
	Let $\sigma\equiv 0$ and assume that $\bm{\left(C_{\geq\varepsilon}\right)}$, $\bm{\left(C_{\text{bdd}}\right)}$ and $\bm{\left(C_{[M]}\right)}$ are satisfied. Then  $\bm{\left(C_{\textbf{BSDE}}\right)}$ holds. 
\end{propo}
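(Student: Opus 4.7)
\smallskip
\textbf{Plan.} With $\sigma\equiv 0$ the driver in~\eqref{eq:driver_of_bsde} collapses to
$$
f(s,y,z)=-\frac{2(\rho_s+\mu_s)^2 y^2}{2\rho_s+\mu_s}+\mu_s y,
$$
which does not depend on $z$, is quadratic in~$y$, and satisfies $f(s,0)=0$. Under $\bm{\left(C_{\geq\varepsilon}\right)}$ and $\bm{\left(C_{\text{bdd}}\right)}$ the coefficient $\frac{(\rho_s+\mu_s)^2}{2\rho_s+\mu_s}$ is $\PdM$-a.e.\ bounded, so on every bounded $y$-interval $f$ is Lipschitz in~$y$ with a bounded random Lipschitz constant. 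My first step would be to truncate: set $\wt f(s,y,z):=f(s,\Pi_{[0,1/2]}(y),z)$, where $\Pi_{[0,1/2]}$ is the projection onto $[0,1/2]$. Then $\wt f$ is globally bounded and globally Lipschitz in $(y,z)$ with bounded random coefficients, and $\wt f(\cdot,0,0)\equiv 0$.

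Next, since the terminal value $\tfrac12$ and the driver $\wt f$ fit into the classical $L^2$-framework (bounded terminal, bounded Lipschitz coefficient, $\wt f(\cdot,0,0)=0$), I would invoke the standard existence and uniqueness theory for Lipschitz BSDEs driven by a continuous local martingale on a general filtered probability space (with the orthogonal martingale part $M^\perp$ appearing automatically from a Galtchouk--Kunita--Watanabe-type decomposition) to obtain a solution $(Y,Z,M^\perp)$ of the truncated BSDE satisfying $E[\sup_{t\le T}Y_t^2]+E[\int_0^T Z_s^2\,d[M]_s]+E[[M^\perp]_T]<\infty$. Condition $\bm{\left(C_{[M]}\right)}$ is not strictly needed for the Lipschitz BSDE itself (as $\wt f(\cdot,0,0)\equiv 0$ and $\xi=\tfrac12$), but will be present because it is an overall assumption.

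To remove the truncation I would invoke the comparison result of Appendix~\ref{sec:comp_bsde} applied to the Lipschitz BSDE with driver $\wt f$. For the upper bound, the constant process $\ol Y\equiv\tfrac12$ is a supersolution of the original (hence truncated) BSDE thanks to the elementary identity
$$
f(s,\tfrac12)
=-\frac{(\rho_s+\mu_s)^2}{2(2\rho_s+\mu_s)}+\frac{\mu_s}{2}
=-\frac{\rho_s^2}{2(2\rho_s+\mu_s)}\le 0,
$$
valid $\PdM$-a.e.\ thanks to $\bm{\left(C_{\geq\varepsilon}\right)}$. For the lower bound, $\ul Y\equiv 0$ is a subsolution with $\wt f(s,0,0)=0$ and terminal value $0\le\tfrac12$. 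Comparison then yields $0\le Y_t\le\tfrac12$ for all $t\in[0,T]$, and on this interval $\wt f(s,Y_s,Z_s)=f(s,Y_s,Z_s)$ $\PdM$-a.e., so $(Y,Z,M^\perp)$ solves the original BSDE~\eqref{eq:bsde} and meets the requirements of $\bm{\left(C_{\textbf{BSDE}}\right)}$.

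\textbf{Main obstacle.} The delicate part is matching the hypotheses of the chosen abstract existence and comparison theorems to the present framework: we are on a general (not necessarily continuous) filtration where solutions of BSDEs naturally carry an orthogonal martingale component $M^\perp$, and we need the comparison principle of Appendix~\ref{sec:comp_bsde} to allow comparison of a genuine BSDE solution with a supersolution/subsolution given by a constant process. The remaining bookkeeping (Lipschitz constants, integrability, passage from truncated to original driver) is routine given the boundedness supplied by $\bm{\left(C_{\geq\varepsilon}\right)}$ and $\bm{\left(C_{\text{bdd}}\right)}$.
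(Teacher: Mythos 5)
Your proposal is correct and follows essentially the same route as the paper: truncate the driver onto $[0,1/2]$, invoke a Lipschitz BSDE existence result on a general filtration (the paper cites Theorem~3.5 of \cite{papapantoleon2018nineyards}), and then use the comparison argument of Appendix~\ref{sec:comp_bsde} with the constant solutions $\tfrac12$ and $0$ (via the identity $\ol f(s,\tfrac12)=-\rho_s^2/(2(2\rho_s+\mu_s))\le 0$ and $\ol f(s,0)=0$) to conclude $0\le Y\le\tfrac12$ and remove the truncation. The only minor quibble is that $\bm{\left(C_{[M]}\right)}$ is in fact used both for the existence step and for the integrability of $\sup_t\Gamma_t^2$ in the comparison argument, so it is not as dispensable as your aside suggests.
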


\begin{proof}
	We define the truncation function $L\colon \mathbb{R} \to [0,1/2]$ by $L(y)=(y\vee 0) \wedge \frac12$, $y\in\mathbb{R}$, and consider BSDE~\eqref{eq:bsde} with the truncated driver 
	\begin{equation*}
	\begin{split}
	\ol{f} \colon \Omega\times [0,T] \times \mathbb{R} \to \mathbb{R}, \quad
	& \ol{f}(s,y) = -\frac{2(\rho_s+\mu_s)^2 L(y)^2}{2\rho_s + \mu_s} +\mu_s L(y) , \quad s\in [0,T], \, y\in\mathbb{R}, 
	\end{split}
	\end{equation*}
	instead of $f$ defined in~\eqref{eq:driver_of_bsde}. 
	Our aim is to first obtain a solution $(Y,Z,M^\perp)$ of the BSDE with truncated driver via \cite[Theorem 3.5]{papapantoleon2018nineyards} and then show that $Y$ is $[0,1/2]$-valued, i.e., $(Y,Z,M^\perp)$ is also a solution of BSDE~\eqref{eq:bsde} with driver~\eqref{eq:driver_of_bsde}. 
	
	Due to $\bm{\left(C_{\geq\varepsilon}\right)}$, $\bm{\left(C_{\text{bdd}}\right)}$ and the definition of $L$ it holds true that for all $y,y'\in\mathbb{R}$ 
	\begin{equation*}
	|\ol{f}(s,y)-\ol{f}(s,y')| \leq 
	\left( \frac{2(\rho_s + \mu_s)^2}{2\rho_s+\mu_s} + |\mu_s| \right) 
	|y-y'| 
	\leq \left( \frac{6(\ol{\rho}^2+\ol{\mu}^2)}{\varepsilon} + \ol{\mu} \right) |y-y'| \quad \PdM\text{-a.e.}
	\end{equation*}
	Therefore, assumption~(F3) in \cite{papapantoleon2018nineyards} is satisfied. It further follows from $\bm{\left(C_{[M]}\right)}$ that (F2) holds true. The fact that $\ol{f}(s,0)=0$ for all $s\in [0,T]$ yields (F5).  
	Since $M$ is continuous, (F4) is satisfied for all $\Phi>0$. 
	Thus, by \cite[Theorem 3.5]{papapantoleon2018nineyards} (see also Corollary~3.6 therein) there exists a solution $(Y,Z,M^\perp)$ of BSDE~\eqref{eq:bsde} with driver $\ol{f}$. In particular, the norm in \cite[Theorem 3.5]{papapantoleon2018nineyards} being finite implies that $E\left[ [M^\perp]_T\right]<\infty$ and $E\left[ \int_0^T Z_s^2 d[M]_s \right] <\infty$. 
	
	In order to show that $Y$ is $[0,1/2]$-valued, observe that 
	$(\wt{Y},\wt{Z},\wt M^\perp)=\left( \frac{1}{2},0,0 \right)$ (resp. $(\wt{Y},\wt{Z},\wt M^\perp)=(0,0,0)$) solves the BSDE 
	\begin{equation*}
	d\wt{Y}_s = \wt{Z}_s dM_s + d\wt M^\perp_s, \quad s\in[0,T], \quad \wt{Y}_T=\frac12 \quad (\text{resp. }\wt{Y}_T=0),
	\end{equation*} 
	with vanishing driver 
	and that 
	\begin{equation*}
	\ol{f}\left(s,\frac12\right) = \frac{-\rho_s^2}{2\left( 2\rho_s + \mu_s \right)} \leq 0 \quad (\text{resp. } \ol{f}(s,0)=0), \quad s\in[0,T].
	\end{equation*}
Finally, it is possible to verify that a comparison principle holds, which yields that $Y\leq \frac12$ and $Y\geq 0$.\footnote{Although the comparison is performed with standard techniques, we could not locate a precise reference that applies in this situation. Therefore, we present the argument in Appendix~\ref{sec:comp_bsde}.}
\end{proof}

\begin{remark}\label{rem:28032021a1}
	Note that the setting in \cite{papapantoleon2018nineyards} is much more general than ours. Amongst others, the BSDE may include jumps and the Lipschitz continuity of the driver is allowed to be stochastic. 
	E.g., we could replace our conditions $\bm{\left(C_{\geq\varepsilon}\right)}$, $\bm{\left(C_{\text{bdd}}\right)}$ and $\bm{\left(C_{[M]}\right)}$ by $\bm{\left(C_{> 0}\right)}$ together with the more abstract assumption that there exists a predictable stochastic process $R$ such that for all $y,y'\in\mathbb{R}$, $|\ol{f}(\omega,s,y) - \ol{f}(\omega,s,y')|\leq R_s(\omega) |y-y'|$ $\PdM$-a.e.    
	and for all $c\in (0,\infty)$, $E\left[ \exp\left( c\int_0^T R_s d[M]_s \right) \right] <\infty$. Notice, however, that we still need to assume $\sigma \equiv 0$ to obtain, possibly stochastic, Lipschitz continuity. 
	Observe furthermore that the assumptions $\bm{\left(C_{\geq\varepsilon}\right)}$, $\bm{\left(C_{\text{bdd}}\right)}$ and $\bm{\left(C_{[M]}\right)}$ 
	in Proposition~\ref{propo:existence_bsde_general_filtration} seem reasonable in light of the requirements in our main Theorem~\ref{thm:sol_val_fct}.
\end{remark}

\subsection{General $\sigma$ and continuous filtration}\label{sec:bsde2}

\begin{propo}\label{propo:existence_bsde_continuous_filtration}
	Assume that $(\cF_t)_{t \in [0,T]}$ is \emph{continuous}
	 in the sense that any $(\cF_t)_{t \in [0,T]}$-martingale is continuous
	and that $[M]_T\leq c_1$ a.s.\ for some deterministic $c_1\in(0,\infty)$. 
	Suppose  $\bm{\left(C_{\geq\varepsilon}\right)}$ and  $\bm{\left(C_{\text{bdd}}\right)}$. Then  $\bm{\left(C_{\textbf{BSDE}}\right)}$ holds. 
\end{propo}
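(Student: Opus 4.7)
The plan is to truncate the $y$-dependence in the driver so as to reduce \eqref{eq:bsde} to a quadratic BSDE with bounded coefficients driven by a continuous martingale on a continuous filtration, invoke an existence theorem of Kobylanski--Morlais--Tevzadze type, and then remove the truncation by a comparison argument.

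First, I would introduce $L(y) = (y \vee 0) \wedge \tfrac12$ and consider the modified driver
\[
\ol f(s, y, z) \;=\; -\,\frac{\bigl((\rho_s + \mu_s) L(y) + \sigma_s z\bigr)^2}{\sigma_s^2 L(y) + \tfrac12 (2\rho_s + \mu_s - \sigma_s^2)} \;+\; \mu_s L(y) + \sigma_s z.
\]
For $L(y) \in [0, 1/2]$ the denominator equals $\rho_s + \tfrac{\mu_s}{2} + \sigma_s^2 (L(y) - \tfrac12)$, which under $\bm{\left(C_{\geq \varepsilon}\right)}$ is $\PdM$-a.e.\ bounded below by $\tfrac{\varepsilon}{2}$. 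Moreover, $\bm{\left(C_{\geq\varepsilon}\right)}$ together with $\bm{\left(C_{\text{bdd}}\right)}$ forces $\sigma_s^2 \le 2\rho_s + \mu_s - \varepsilon \le 2\ol\rho + \ol\mu$ $\PdM$-a.e., so $\sigma$ is automatically bounded in this setting. It follows that $\ol f$ is continuous in $(y,z)$ and admits a deterministic quadratic growth estimate
\[
|\ol f(s, y, z)| \;\le\; K(1 + z^2), \quad \PdM\text{-a.e., for all } y, z \in \R,
\]
for a constant $K$ depending only on $\varepsilon$, $\ol\rho$, $\ol\mu$, and the terminal condition $\tfrac12$ is bounded.

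Since $(\cF_t)_{t \in [0,T]}$ is continuous, $M$ is continuous, and $[M]_T \le c_1$ a.s., I would then apply an existence theorem for quadratic BSDEs driven by a continuous martingale with bounded terminal condition (in the spirit of Kobylanski, Tevzadze, or Morlais, adapted to the continuous-filtration martingale setting) to obtain a solution $(Y, Z, M^\perp)$ of BSDE \eqref{eq:bsde} with driver $\ol f$ such that $Y$ is bounded and both $\int_0^\cdot Z_s\,dM_s$ and $M^\perp$ are BMO martingales. The continuity of the filtration guarantees that $M^\perp$ is itself continuous, keeping us within the scope of the continuous quadratic BSDE theory. Because $[M]_T \le c_1$, the BMO property then automatically implies $E\!\left[\int_0^T Z_s^2\,d[M]_s\right] < \infty$ and $E\!\left[[M^\perp]_T\right] < \infty$.

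Finally, I would remove the truncation via comparison against the constant sub- and supersolutions $\underline Y \equiv 0$ and $\ol Y \equiv \tfrac12$. Direct computation yields $\ol f(s, 0, 0) = 0$ and
\[
\ol f\!\left(s, \tfrac12, 0\right) \;=\; -\frac{\rho_s^2}{2(2\rho_s + \mu_s)} \;\le\; 0,
\]
where the denominator is positive because $2\rho_s + \mu_s \ge \varepsilon + \sigma_s^2 > 0$. A comparison principle for quadratic BSDEs driven by continuous martingales (adapting the argument of Appendix~\ref{sec:comp_bsde} and linearising the quadratic term in $z$ through the BMO estimates provided by the existence step, so that a Girsanov change of measure is admissible) then yields $0 \le Y_s \le \tfrac12$ for all $s \in [0,T]$. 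Consequently $L(Y) = Y$, the driver $\ol f$ coincides with $f$ along $(Y, Z)$, and $(Y, Z, M^\perp)$ is a solution of the original BSDE~\eqref{eq:bsde} satisfying all the requirements of $\bm{\left(C_{\textbf{BSDE}}\right)}$.

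The main obstacle is the existence step for the truncated BSDE: in contrast to Proposition~\ref{propo:existence_bsde_general_filtration}, the driver is genuinely quadratic in $z$, so the Lipschitz BSDE machinery used there is unavailable, and one must rely on quadratic BSDE theory for martingale-driven equations. The continuity assumption on $(\cF_t)$ is crucial at exactly this point, since it rules out jumps in $M^\perp$ and allows us to stay within the well-developed continuous framework; the a.s.\ boundedness of $[M]_T$ then plays a double role of converting BMO estimates into the integrability required by $\bm{\left(C_{\textbf{BSDE}}\right)}$ and of enabling the subsequent comparison.
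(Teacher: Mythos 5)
Your truncation of the $y$-variable, the lower bound $\sigma_s^2 L(y)+\tfrac12(2\rho_s+\mu_s-\sigma_s^2)\ge\tfrac\varepsilon2$, the automatic bound on $\sigma$, the deterministic quadratic growth estimate, and the appeal to a quadratic-BSDE existence theorem on continuous filtrations are exactly what the paper does (it invokes Steps 3 and 4 of the proof of Theorem~2.5 in \cite{morlais2009quadratic}, using $[M]_T\le c_1$ to check the integrated growth condition $(H_1')$ there). Up to and including the existence of a bounded solution $(Y,Z,M^\perp)$ of the truncated equation with $E[\int_0^T Z_s^2\,d[M]_s]<\infty$ and $E[[M^\perp]_T]<\infty$, your argument coincides with the paper's.

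The gap is in the removal of the truncation, specifically in the lower bound $Y\ge0$. Your proposed mechanism --- linearise the $z$-quadratic part, absorb it by a Girsanov change of measure justified by BMO estimates, and then run the Appendix~\ref{sec:comp_bsde} representation --- controls only the $z$-increment $\ol f(s,0,Z_s)-\ol f(s,0,0)=\theta_sZ_s$ with $\theta$ of linear growth in $Z$. It does not control the $y$-increment: since the denominator $\sigma_s^2L(y)+\tfrac12(2\rho_s+\mu_s-\sigma_s^2)$ depends on $y$, the difference quotient $\bigl(\ol f(s,y,z)-\ol f(s,0,z)\bigr)/y$ contains a term of order $\sigma_s^4z^2$ divided by a product of denominators, so the effective coefficient $a_s$ of $Y_s$ satisfies only $|a_s|\le C(1+Z_s^2)$. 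The representation of Proposition~\ref{propo:comparison} then requires integrability of $\exp(\int_0^Ta_s\,d[M]_s)$, i.e.\ exponential moments of $\int_0^TZ_s^2\,d[M]_s$ with a \emph{prescribed} constant $C$ determined by $\varepsilon,\ol\rho,\ol\mu$; John--Nirenberg yields such moments only for constants small relative to the BMO norm of $\int Z\,dM$, which you cannot arrange. Equivalently, the driver is not Lipschitz in $y$ uniformly in $z$, so off-the-shelf comparison theorems (Morlais's included) do not apply as stated. The paper circumvents this with an exponential change of variables $\wt Y=1-e^{-\delta Y}$, choosing $\delta$ with $\tfrac\delta2\ge\tfrac{2\sigma^2}{2\rho+\mu-\sigma^2}$ so that the It\^o correction $\tfrac12h''(Y)\,d[Y]$ turns the adverse $Z^2$ contribution into a nonnegative remainder, after which the linearisation coefficients are genuinely bounded and the $\Gamma$-representation closes. (For the upper bound $Y\le\tfrac12$ no transform is needed: there the $Z^2$ term lands in the residual $\varphi\ge0$ with the favourable sign and the coefficients $\psi,\eta$ are bounded, so a direct linearisation of the kind you describe does work.)
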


\begin{proof}
	We first consider BSDE~\eqref{eq:bsde} with the truncated driver 
	\begin{equation*}
	\begin{split}
	& \ol{f} \colon \Omega\times [0,T] \times \mathbb{R} \times \mathbb{R} \to \mathbb{R}, \\
	& \ol{f}(s,y,z) = -\frac{\left( (\rho_s+\mu_s) L(y) + \sigma_s z \right)^2}{\sigma_s^2 L(y) + \frac12 (2\rho_s + \mu_s - \sigma_s^2 )} +\mu_s L(y) + \sigma_s z , \quad s\in [0,T], \, y,z\in\mathbb{R}, 
	\end{split}
	\end{equation*}
	where 
	$L\colon \mathbb{R} \to [0,1/2]$, $L(y)=(y\vee 0) \wedge \frac12$, $y\in\mathbb{R}$. 
	
	Note that $\bm{\left(C_{\geq\varepsilon}\right)}$ and  $\bm{\left(C_{\text{bdd}}\right)}$ imply that $\sigma^2 \leq 2\ol{\rho} + \ol{\mu} - \varepsilon$. 
	Moreover, by $\bm{\left(C_{\geq\varepsilon}\right)}$ and $L\geq 0$ we have $\sigma^2 L(y) + \frac12 (2\rho + \mu - \sigma^2 ) \geq \frac{\varepsilon}{2}$ for all $y\in \mathbb{R}$. 
	Together with $0\leq L\leq \frac12$ and the boundedness of $\rho,\mu$ and $\sigma$ it is thus possible to show that there exist deterministic constants $c_2,c_3 \in (0,\infty)$ such that for all  $y,z \in \mathbb{R}$
	\begin{equation*}
	\begin{split}
		| \ol{f}(s,y,z) | & \leq \frac12 \ol{\mu} + \frac{2}{\varepsilon} \, \Big| (\rho_s + \mu_s)^2 L(y)^2 + 2 (\rho_s + \mu_s ) L(y) \sigma_s z + \sigma_s^2 z^2 + \sigma_s^3 L(y) z \\
		& \quad + \frac12 (2\rho_s + \mu_s - \sigma_s^2) \sigma_s z \Big| \\
		& \leq c_2 + \frac{c_3}{2} z^2 \quad \PdM\text{-a.e.}
	\end{split}
	\end{equation*}
	Furthermore, it holds that $\int_0^T c_2 d[M]_s \leq c_1 c_2$. Hence, assumption $(H_1')$ in \cite{morlais2009quadratic} is satisfied. 
	Observe moreover that $\ol{f}$ is continuous in $(y,z)$. 
	Step 3 and 4 in the proof of \cite[Theorem 2.5]{morlais2009quadratic} show that there exists a solution $(Y,Z,M^\perp)$ of BSDE~\eqref{eq:bsde} with driver $\ol{f}$ 
	and it satisfies $E\left[ \int_0^T Z_s^2 d[M]_s \right]<\infty$, $E\left[ [M^\perp]_T \right] <\infty$ and that $Y$ is bounded. 
	
	In the remainder we prove that $Y$ is $[0,1/2]$-valued, which implies that $(Y,Z,M^\perp)$ is also a solution of BSDE~\eqref{eq:bsde} with driver $f$.
	
	For the upper bound, let $\wh Y=\frac{1}{2}-Y$, $\wh Z=-Z$ and $\wh M^\perp = - M^\perp$. Then it holds that 
	\begin{equation*}
	d\wh Y_t = - \wh f(t,\wh Y_t, \wh Z_t)d[M]_t + \wh Z_t dM_t + d\wh M^\perp _t, \quad t\in[0,T], \quad \wh Y_T = 0,
	\end{equation*}
	where 
	\begin{equation*}
		\begin{split}
		& \wh f(t,\wh Y_t, \wh Z_t) = \frac{
			((\rho_t + \mu_t)L(Y_t)+\sigma_t Z_t)^2}{\sigma_t^2L(Y_t)+\frac{1}{2}\left(2\rho_t +\mu_t-\sigma^2_t\right)}-\frac{1}{2}\mu_t+\wh Y_t \mu_t \frac{L(Y_t)-\frac{1}{2}}{Y_t-\frac{1}{2}}+\sigma_t \wh Z_t \\
		&=\frac{ \wh Y_t \frac{L(Y_t)-\frac{1}{2}}{Y_t-\frac{1}{2}}\left(\frac{\sigma_t^2\mu_t}{2}-(2\rho_t+\mu_t)\mu_t(L(Y_t)+\frac{1}{2})\right)
			-2\wh Z_t \sigma_t (\rho_t+\mu_t)L(Y_t)+\rho^2_tL(Y_t)^2+\sigma_t^2Z_t^2
		}{\sigma_t^2L(Y_t)+\frac{1}{2}\left(2\rho_t +\mu_t-\sigma^2_t\right)}\\
		&\quad+\wh Y_t \mu_t \frac{L(Y_t)-\frac{1}{2}}{Y_t-\frac{1}{2}}+\sigma_t \wh Z_t \\
		& =  \wh Y_t \, \frac{L(Y_t)-\frac{1}{2}}{Y_t-\frac{1}{2}} \left( \frac{\frac{\sigma_t^2\mu_t}{2}-(2\rho_t+\mu_t)\mu_t(L(Y_t)+\frac{1}{2})}{\sigma_t^2L(Y_t)+\frac{1}{2}\left(2\rho_t +\mu_t-\sigma^2_t\right)}  + \mu_t \right) \\
		& \quad + \wh Z_t \left( \frac{-2\sigma_t(\rho_t+\mu_t)L(Y_t)}{\sigma_t^2L(Y_t)+\frac{1}{2}\left(2\rho_t +\mu_t-\sigma^2_t\right)} + \sigma_t \right) 
		 + \frac{\rho^2_tL(Y_t)^2+\sigma_t^2Z_t^2}{\sigma_t^2L(Y_t)+\frac{1}{2}\left(2\rho_t +\mu_t-\sigma^2_t\right)}, 
		 \quad t\in [0,T],
		\end{split}
		\end{equation*}
		with the convention that $0/0:=0$. 
		Denote $\wh f(t,y,z) = y \psi_t + z \eta_t + \varphi_t$, $t\in[0,T]$, $y,z\in\mathbb{R}$, and observe that $\psi$ and $\eta$ are bounded.  
		Since it holds that $\rho^2L(Y)^2+\sigma^2Z^2\ge 0$ and $\sigma^2L(Y)+\frac{1}{2}(2\rho +\mu-\sigma^2)\ge \frac{\varepsilon}{2} >0$, we have that $\varphi \geq 0$ $\PdM$-a.e. 
		Define the process $\Gamma=(\Gamma_t)_{t\in[0,T]}$ by $d\Gamma_t = \Gamma_t \psi_t d[M]_t + \Gamma_t \eta_t dM_t$, $t\in[0,T]$, $\Gamma_0=1$. 
		One can then show 
		that $\wh Y$ has the representation 
		\begin{equation*}
		\wh Y_t = \Gamma_t^{-1} E_t\left[ \int_t^T \Gamma_s \varphi_s d[M]_s \right], \quad t\in[0,T],
		\end{equation*}  
		and hence $\wh Y\ge 0$, i.e., $Y\le \frac{1}{2}$.
		
		Next, we show that $Y$ is nonnegative. To this end we first choose $\delta \in (0,\infty)$ such that $\frac{\delta}{2}\ge \frac{2\sigma^2}{2\rho+\mu-\sigma^2}$ $\PdM$-a.e. Let $h\colon \R \to \R$ be the function $h(y)=1-e^{-\delta y}$, $y\in \R$, and let $\wt Y=(\wt Y_t)_{t\in [0,T]}$ be the process  $\wt Y_t=h(Y_t)$, $t\in [0,T]$. Then it holds for all $t\in [0,T]$ 
		\begin{equation}\label{eq:d_wt_Y_in_proof_cont_filtr}
		\begin{split}
		d\wt Y_t  = h'(Y_t)dY_t+\frac{1}{2}h''(Y_t)d[Y]_t 
		& =-\left\{\ol{f}(t,Y_t,Z_t) h'(Y_t)-\frac{Z^2_th''(Y_t)}{2}\right\}d[M]_t \\
		& \quad +\frac{1}{2}h''(Y_t)d[M^\perp]_t +h'(Y_t)Z_tdM_t+h'(Y_t)dM^\perp_t.
		\end{split}
		\end{equation}
		Let $\wt Z=(\wt Z_t)_{t\in [0,T]}$, $\wt{M}^\perp=(\wt M^\perp_t)_{t\in [0,T]}$ and $A=(A_t)_{t\in [0,T]}$ be the processes  $\wt Z_t=h'(Y_t)Z_t$, $\wt M^\perp_t=\int_0^th'(Y_s)dM^\perp_s$ and $A_t=-\frac{1}{2}\int_0^th''(Y_s)d[M^\perp]_s$, $t\in [0,T]$.
		Observe that it holds $h'(y)=\delta e^{-\delta y}=\delta(1-h(Y_t))$ and $h''(y)=-\delta^2e^{-\delta y}=-\delta h'(y)$ for all $y\in \R$. In particular, the process $A$ is nondecreasing. We obtain from~\eqref{eq:d_wt_Y_in_proof_cont_filtr} that for all $t\in[0,T]$ 
		\begin{equation*}
		\begin{split}
		d\wt Y_t&=-\Bigg\{-\frac{
			\delta (\rho_t + \mu_t)^2L(Y_t)^2(1-\wt Y_t)+2\sigma_t(\rho_t + \mu_t)L(Y_t) \wt Z_t}{\sigma_t^2L(Y_t)+\frac{1}{2}\left(2\rho_t +\mu_t-\sigma^2_t\right)}+\delta \mu_t L(Y_t)(1-\wt Y_t)+\sigma_t \wt Z_t\\
		&\quad +Z^2_th'(Y_t)\left(\frac{\delta}{2}-\frac{\sigma_t^2}{\sigma_t^2L(Y_t)+\frac{1}{2}\left(2\rho_t +\mu_t-\sigma^2_t\right)} \right)\Bigg\}d[M]_t 
		-dA_t +\wt Z_tdM_t+d\wt M^\perp_t\\
		&=-\Bigg\{\wt Y_t \frac{\delta L(Y_t)(1-\wt Y_t)}{\wt Y_t} \left( \mu_t -\frac{
			(\rho_t + \mu_t)^2L(Y_t)}{\sigma_t^2L(Y_t)+\frac{1}{2}\left(2\rho_t +\mu_t-\sigma^2_t\right)}\right)\\
		&\quad+\sigma_t \wt Z_t \left(1-\frac{2(\rho_t + \mu_t)L(Y_t)}{\sigma_t^2L(Y_t)+\frac{1}{2}\left(2\rho_t +\mu_t-\sigma^2_t\right)}\right) \\ &\quad +Z^2_th'(Y_t)\left(\frac{\delta}{2}-\frac{\sigma_t^2}{\sigma_t^2L(Y_t)+\frac{1}{2}\left(2\rho_t +\mu_t-\sigma^2_t\right)} \right)\Bigg\}d[M]_t 
		 -dA_t +\wt Z_tdM_t+d\wt M^\perp_t.
		\end{split}
		\end{equation*}
		Denote the coefficients of $\wt Y$ resp. $\wt Z$ by 
		\begin{equation*}
		\begin{split}
		\wt \psi_t & = \frac{\delta L(Y_t)(1-\wt Y_t)}{\wt Y_t} \left( \mu_t -\frac{
			(\rho_t + \mu_t)^2L(Y_t)}{\sigma_t^2L(Y_t)+\frac{1}{2}\left(2\rho_t +\mu_t-\sigma^2_t\right)}\right), \quad t\in [0,T],	\\
		\wt \eta_t & = \sigma_t \left(1-\frac{2(\rho_t + \mu_t)L(Y_t)}{\sigma_t^2L(Y_t)+\frac{1}{2}\left(2\rho_t +\mu_t-\sigma^2_t\right)}\right), \quad t\in [0,T],		
		\end{split}
		\end{equation*}
		an define $\wt \Gamma$ by $d\wt \Gamma_t = \wt \Gamma_t \wt\psi_t d[M]_t + \wt \Gamma_t \wt\eta_t dM_t$, $t\in[0,T]$, $\wt\Gamma_0 = 1$. 
		Note that the process $\frac{\delta L(Y_t)(1-\wt Y_t)}{\wt Y_t}=\frac{\delta L(Y_t)e^{-\delta Y_t}}{1-e^{-\delta Y_t}}$, $t\in [0,T]$, is bounded. 
		Together with $\bm{\left(C_{\geq\varepsilon}\right)}$,   $\bm{\left(C_{\text{bdd}}\right)}$ and $0\leq L\leq \frac12$ it follows that $\wt \psi$ and $\wt \eta$ are bounded. 
		One can then show that 
		\begin{equation}\label{eq:representation_of_supersol_Y}
		\begin{split}
		\wt Y_t \wt\Gamma_t & = E_t\bigg[ \wt Y_T \wt \Gamma_T + \int_t^T \wt\Gamma_s Z_s^2  h'(Y_s)\left(\frac{\delta}{2}-\frac{\sigma_s^2}{\sigma_s^2L(Y_s)+\frac{1}{2}\left(2\rho_s +\mu_s-\sigma^2_s\right)} \right) d[M]_s \\
		& \qquad + \int_t^T \wt\Gamma_s dA_s \bigg] , 
		\quad t \in [0,T].
		\end{split}
		\end{equation}		
		Due to the choice of $\delta$ we have that $Z^2h'(Y)\left(\frac{\delta}{2}-\frac{\sigma^2}{\sigma^2L(Y)+\frac{1}{2}\left(2\rho +\mu-\sigma^2\right)} \right) \geq 0$ $\PdM$-a.e.  
		Since furthermore $A$ is nondecreasing and $\wt Y$ has nonnegative terminal value $1-e^{-\frac{\delta}{2}}$, it follows from~\eqref{eq:representation_of_supersol_Y} that $\wt Y\geq 0$ and hence $Y\geq 0$.
\end{proof}

\section{Proofs of results from Section~\ref{sec:main_results}}\label{sec:proofs_of_main_results}

We first present a technical lemma that is used in the proof of Theorem~\ref{thm:quadratic_exp_J}. 

\begin{lemma}\label{lem:Y_Tminus_equals_Y_T}
	Let $\left( Y, Z, M^\perp \right)$ be a solution of BSDE~\eqref{eq:bsde} as described in $\bm{\left(C_{\textbf{BSDE}}\right)}$. 
	Then $Y_{T-}=\frac12$ a.s., i.e., $Y$ does not jump at terminal time.
\end{lemma}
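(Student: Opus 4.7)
The plan is to identify $Y_{T-}$ explicitly in terms of the terminal jump of $M^\perp$, and then to rule out any such jump by combining the $[0,1/2]$-bound on $Y$ with the martingale property.

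First I would rewrite BSDE~\eqref{eq:bsde} in forward form as
\begin{equation*}
Y_t = Y_0 - \int_0^t f(s, Y_s, Z_s)\,d[M]_s + \int_0^t Z_s\,dM_s + M^\perp_t, \quad t \in [0,T].
\end{equation*}
Since $M$ is a continuous local martingale, the quadratic variation $[M]$ is continuous, and the drift integral is therefore a continuous process of finite variation (it is well-defined, as the BSDE formulation forces $d[M]$-integrability of $s \mapsto f(s, Y_s, Z_s)$). The stochastic integral $\int_0^\cdot Z_s\,dM_s$ is also continuous because $M$ is. Hence the only possible source of a jump of $Y$ at time $T$ is $M^\perp$, so $\Delta Y_T = \Delta M^\perp_T$, and combined with the terminal condition $Y_T = 1/2$ this yields $Y_{T-} = \tfrac12 - \Delta M^\perp_T$.

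Next I would show $\Delta M^\perp_T = 0$ a.s.\ by a squeeze. On one hand, the $[0,1/2]$-valued property of $Y$ from $\bm{\left(C_{\textbf{BSDE}}\right)}$ together with the c\`adl\`ag property forces $Y_{T-}\leq 1/2$, and hence $\Delta M^\perp_T \geq 0$ almost surely. On the other hand, $E[[M^\perp]_T] < \infty$ (also from $\bm{\left(C_{\textbf{BSDE}}\right)}$) makes $M^\perp$ a square-integrable, hence uniformly integrable, martingale on $[0,T]$ with $M^\perp_0=0$, so
\begin{equation*}
E\!\left[\Delta M^\perp_T\right] = E\!\left[M^\perp_T\right] - \lim_{t \uparrow T} E\!\left[M^\perp_t\right] = 0 - 0 = 0
\end{equation*}
(the interchange of limit and expectation being justified by uniform integrability). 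A nonnegative random variable with vanishing expectation is zero almost surely, so $\Delta M^\perp_T = 0$ and therefore $Y_{T-} = 1/2$ almost surely.

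I do not anticipate a real obstacle; the only point that deserves explicit justification is the continuity at $T$ of the drift term $\int_0^\cdot f(s,Y_s,Z_s)\,d[M]_s$, which is just absolute continuity of a Lebesgue--Stieltjes integral with respect to the continuous measure $d[M]$ and follows from the well-posedness of the BSDE itself.
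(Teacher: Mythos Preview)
Your proof is correct, but it takes a different route from the paper. The paper's argument takes conditional expectations in the BSDE to obtain $Y_t = \tfrac12 + E_t[A_T] - A_t$ with $A_t = \int_0^t f(s,Y_s,Z_s)\,d[M]_s$; since $A$ is continuous, $A_T$ is $\cF_{T-}$-measurable, and martingale convergence gives $E_t[A_T]\to E[A_T\mid\cF_{T-}]=A_T$ as $t\uparrow T$, whence $Y_{T-}=\tfrac12$. That argument never touches the $[0,1/2]$ bound on $Y$; it only needs integrability of $A_T$ (which follows from $\bm{(C_{\textbf{BSDE}})}$). Your argument instead isolates $\Delta Y_T=\Delta M^\perp_T$ and exploits the specific fact that the terminal value $\tfrac12$ coincides with the upper bound of $Y$, turning the problem into a sign-plus-mean-zero squeeze on $\Delta M^\perp_T$. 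The paper's route is thus more general (it would apply verbatim to any terminal value and to unbounded $Y$, provided the drift stays integrable), while your route is a pleasant shortcut that leans on the particular structure $Y\le Y_T$ present here.
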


\begin{proof}
	We have, with $f$ defined in \eqref{eq:driver_of_bsde}, that 
	\begin{equation}\label{eq:17022020a1}
	Y_t = \frac12 + E_t\left[ \int_t^T f(s,Y_s,Z_s) d[M]_s \right] 
	= \frac12 + E_t[A_T] - A_t, \quad t\in [0,T],
	\end{equation}
	where $A_t = \int_0^t f(s,Y_s,Z_s) d[M]_s$, $t \in [0,T]$. 
	As $A=\left( A_t \right)_{t\in [0,T]}$ is a continuous process, it holds $\lim_{t \uparrow T} A_t = A_T$, hence $A_T$ is $\cF_{T-}$-measurable. Therefore, 
	\begin{equation*}
	\lim_{t \uparrow T} E_t[A_T] = E\left[A_T | \cF_{T-} \right] = A_T  \text{ a.s.}
	\end{equation*}
	The result now follows from~\eqref{eq:17022020a1}.
\end{proof}

We furthermore introduce the following lemma that we employ  
in the proofs of Theorem~\ref{thm:quadratic_exp_J}, 
Lemma~\ref{lem:uniqueness_of_opt_strat_up_to_Dm_null_sets}, Lemma~\ref{lem:Xifbetancadlagsemimart}, and  Theorem~\ref{thm:sol_val_fct}. 
It provides helpful representations for the dynamics of the process $A=X-\alpha D$ where $X$ is an execution strategy and $D$ its deviation.

\begin{lemma}\label{lem:unifieddAcalc}
	Let $x,d \in \R$ and $t \in [0,T]$. 
	Suppose that $X=(X_s)_{s\in[t,T]}$ is a c\`adl\`ag semimartingale with $X_{t-}=x$ and $X_T=0$, and let $D=(D_s)_{s\in[t,T]}$ be the associated deviation process given by \eqref{eq:deviation_dyn}. 
	It then holds for $A=(A_s)_{s\in[t,T]}$ defined by $A_s=X_s-\alpha_sD_s$, $s \in [t,T]$, that 
	\begin{equation*}
		\begin{split}
			dA_s & = -D_s d\alpha_s + \alpha_s \rho_s D_sd[M]_s 
			= (A_s-X_s) \left(\frac{d\alpha_s}{\alpha_s} -  \rho_s d[M]_s\right), \quad s \in [t,T].
		\end{split}
	\end{equation*} 
\end{lemma}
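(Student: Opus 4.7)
The plan is to apply the integration-by-parts formula to $\alpha_s D_s$, substitute the dynamics of $D$, and exhibit a cancellation between the covariation term $d[\alpha,D]_s$ and the correction term $\alpha_s\,d[\gamma,X]_s$ arising from~\eqref{eq:deviation_dyn}. Writing $A_s=X_s-\alpha_sD_s$, the result will then drop out of $dA_s=dX_s-d(\alpha_sD_s)$.

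First, since $\alpha$ is continuous, Itô's product rule gives
$$d(\alpha_sD_s)=\alpha_s\,dD_s+D_{s-}\,d\alpha_s+d[\alpha,D]_s.$$
Using \eqref{eq:deviation_dyn} together with $\alpha_s\gamma_s=1$, I get
$$\alpha_s\,dD_s=-\rho_s\alpha_sD_s\,d[M]_s+dX_s+\alpha_s\,d[\gamma,X]_s.$$

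The key step is identifying the two covariation terms. Since $M$ is continuous, $[M,X]=[M,X^c]$, where $X^c$ denotes the continuous local martingale part of $X$. Because $\gamma$ is continuous with continuous local martingale part $\int_0^{\cdot}\gamma_u\sigma_u\,dM_u$ (by \eqref{eq:dyn_gamma}), I obtain
$$d[\gamma,X]_s=\gamma_s\sigma_s\,d[M,X]_s,\qquad\text{so}\qquad\alpha_s\,d[\gamma,X]_s=\sigma_s\,d[M,X]_s.$$
On the other hand, the continuous local martingale part of $D$ is $\int_0^{\cdot}\gamma_u\,dX^c_u$, because $\int\rho D\,d[M]$ and $[\gamma,X]$ are of finite variation and drop out. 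By \eqref{eq:dyn_alpha}, the continuous local martingale part of $\alpha$ is $-\int_0^{\cdot}\alpha_u\sigma_u\,dM_u$, so
$$d[\alpha,D]_s=-\alpha_s\sigma_s\gamma_s\,d[M,X]_s=-\sigma_s\,d[M,X]_s,$$
and the two contributions cancel exactly.

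Assembling the pieces yields
$$d(\alpha_sD_s)=dX_s-\rho_s\alpha_sD_s\,d[M]_s+D_{s-}\,d\alpha_s,$$
whence $dA_s=dX_s-d(\alpha_sD_s)=-D_{s-}\,d\alpha_s+\alpha_s\rho_sD_s\,d[M]_s$, and $D_{s-}\,d\alpha_s=D_s\,d\alpha_s$ since $\alpha$ is continuous. The second equality in the lemma is then a rewrite: because $A_s-X_s=-\alpha_sD_s$, one has
$$(A_s-X_s)\left(\frac{d\alpha_s}{\alpha_s}-\rho_s\,d[M]_s\right)=-D_s\,d\alpha_s+\alpha_s\rho_sD_s\,d[M]_s.$$
The only nontrivial point in the whole argument is the covariation cancellation; everything else is bookkeeping driven by $\alpha\gamma=1$ and the continuity of $\alpha$, $\gamma$ and $M$.
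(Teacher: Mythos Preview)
Your proof is correct and follows essentially the same route as the paper: integration by parts on $\alpha D$, substitution of the dynamics~\eqref{eq:deviation_dyn}, and the covariation cancellation $d[\alpha,D]_s=-\alpha_s\,d[\gamma,X]_s$. The only difference is cosmetic: the paper records this covariation identity in one line as a consequence of \eqref{eq:dyn_gamma}, \eqref{eq:dyn_alpha} and~\eqref{eq:deviation_dyn}, whereas you work it out explicitly through the continuous local martingale parts of $\gamma$, $\alpha$ and~$D$.
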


\begin{proof}
	Note first that it follows from \eqref{eq:dyn_gamma}, \eqref{eq:dyn_alpha} and \eqref{eq:deviation_dyn} that $d[\alpha,D]_s = -\alpha_s d[\gamma,X]_s$, $s \in [t,T]$. 
	By integration by parts and \eqref{eq:deviation_dyn} it then holds for all $s\in[t,T]$ that 
	\begin{equation*}	
		\begin{split}
			dA_s & = dX_s - D_{s-} d\alpha_s - \alpha_s dD_s - d[\alpha,D]_s \\
			& = dX_s - D_sd\alpha_s - \alpha_s\rho_sD_s d[M]_s - \alpha\gamma_s dX_s - \alpha_s d[\gamma,X]_s + \alpha_s d[\gamma,X]_s \\	
			& = -D_sd\alpha_s + \alpha_s \rho_s D_s d[M]_s. 
		\end{split}
	\end{equation*}
	The second equality in the claim now follows from the fact that $-D_s=\frac{A_s-X_s}{\alpha_s}$, $s \in [t,T]$, by definition of $A$.
\end{proof}

\begin{proof}[Proof of Theorem \ref{thm:quadratic_exp_J}]
We fix $x,d\in \R$, $t\in [0,T]$ and $X\in \cA_t(x,d)$ throughout the proof.
First observe that it follows from \eqref{eq:deviation_dyn} and the fact that $\gamma=1/\alpha$ that for all $s\in [t,T]$ it holds $d[\alpha,D]_s=-\alpha_sd[\gamma,X]_s$ and $d[D]_s=\frac{1}{\alpha^2_s}d[X]_s$.
This shows that
\begin{equation}\label{eq:alpha_T_mult_D_T_squared}
\begin{split}
\alpha_TD_{T-}^2&=\alpha_td^2+\int_{[t,T)}\alpha_sdD_s^2+\int_{[t,T)}D_s^2d\alpha_s+\int_{[t,T)}d[\alpha,D^2]_s\\
&=\alpha_td^2+2\int_{[t,T)}\alpha_sD_{s-} dD_s+\int_{[t,T)}\alpha_s d[D]_s+\int_{[t,T)}D_s^2d\alpha_s+2\int_{[t,T)}D_{s-} d[\alpha,D]_s\\
&=\alpha_td^2-2\int_{[t,T)}\rho_s\alpha_sD^2_{s-} d[M]_s
+2\int_{[t,T)}D_{s-} dX_s
+2\int_{[t,T)} \alpha_sD_{s-} d[\gamma, X]_s\\
&\quad+\int_{[t,T)}\frac{1}{\alpha_s} d[X]_s+\int_{[t,T)}D_s^2d\alpha_s-2\int_{[t,T)}\alpha_sD_{s-}d[\gamma,X]_s\\
& = \alpha_t d^2 - \int_t^T \alpha_s D_s^2 \left( 2 \rho_s + \mu_s - \sigma_s^2 \right) d[M]_s 
+ 2 \int_{[t,T)} D_{s-} dX_s 
+ \int_{[t,T)} \gamma_s d[X]_s \\
& \quad    - \int_t^T \alpha_s D_s^2 \sigma_s dM_s .
\end{split}
\end{equation}
The first equality in Lemma~\ref{lem:unifieddAcalc} and~\eqref{eq:dyn_alpha} prove for all $s\in [t,T]$ that
\begin{equation*}
	d\left(X_s-\alpha_sD_s\right)
	= \alpha_s D_{s}(\rho_s +\mu_s -\sigma^2_s)d[M]_s+\sigma_s \alpha_s D_{s} dM_s.
\end{equation*}
In particular, the process $X-\alpha D$ has continuous sample paths. Moreover, it follows for all $s\in [t,T]$ that
\begin{equation}\label{eq:dyn_cont_proc}
\begin{split}
&d\left(X_s-\alpha_sD_s\right)^2=
2\left(X_s-\alpha_sD_s\right)d\left(X_s-\alpha_sD_s\right)+d\left[X-\alpha D\right]_s\\
&=
2\left(X_s-\alpha_sD_s\right)\left(\alpha_s D_{s}(\rho_s +\mu_s-\sigma^2_s )d[M]_s+\sigma_s \alpha_s D_{s}dM_s \right)+\sigma^2_s \alpha^2_s D^2_{s}d[M]_s\\
&=
\alpha_s D_{s}\left[2\left(X_s-\alpha_sD_s\right)(\rho_s +\mu_s-\sigma^2_s )+\sigma^2_s \alpha_s D_{s}\right]d[M]_s +2\sigma_s \alpha_s D_{s}\left(X_s-\alpha_sD_s\right)dM_s.
\end{split}
\end{equation}
Next observe that \eqref{eq:dyn_gamma} and \eqref{eq:bsde} imply for all $s\in [t,T]$ that
\begin{equation*}
\begin{split}
d(\gamma_s Y_s)&=\gamma_sY_s(\mu_sd[M]_s+\sigma_sdM_s) +
\gamma_s\left[\frac{
((\rho_s + \mu_s)Y_s+\sigma_sZ_s)^2}{\sigma_s^2Y_s+\frac{1}{2}\left(2\rho_s +\mu_s-\sigma^2_s\right)}-\mu_s Y_s -\sigma_s Z_s\right]d[M]_s\\
&\quad +\gamma_s Z_sdM_s +\gamma_s dM^{\perp}_s+\gamma_s\sigma_sZ_sd[M]_s\\
&=
\frac{\gamma_s
((\rho_s + \mu_s)Y_s+\sigma_sZ_s)^2}{\sigma_s^2Y_s+\frac{1}{2}\left(2\rho_s +\mu_s-\sigma^2_s\right)}d[M]_s+\gamma_s(\sigma_sY_s+Z_s) dM_s+\gamma_s dM^{\perp}_s.
\end{split}
\end{equation*}
This and \eqref{eq:dyn_cont_proc} prove that
\begin{equation}\label{eq:prodgammaYwithsquaredcontproc}
\begin{split}
&\gamma_TY_{T-}\left(X_{T-}-\alpha_TD_{T-}\right)^2\\
&=\gamma_tY_{t}\left(x-\alpha_t d\right)^2
+\int_{(t,T)}\gamma_sY_{s-}d(X_s-\alpha_sD_s)^2
+\int_{(t,T)}
\left(X_{s}-\alpha_s D_s\right)^2d(\gamma_sY_s)\\
&\quad+[\gamma Y,(X-\alpha D)^2]_{T-}\\
& = \gamma_t Y_t (x-\alpha_td)^2 
+ \int_t^T \Bigg(  D_s Y_s \left( 2(X_s-\alpha_s D_s) (\rho_s + \mu_s - \sigma_s^2) + \sigma_s^2 \alpha_s D_s \right) \\
& \qquad + (X_s - \alpha_s D_s)^2 \gamma_s  \frac{
((\rho_s + \mu_s)Y_s+\sigma_sZ_s)^2}{\sigma_s^2 Y_s+\frac{1}{2}\left(2\rho_s +\mu_s-\sigma^2_s\right)} 
+ 2 \sigma_s (\sigma_s Y_s + Z_s) D_s (X_s - \alpha_s D_s) \Bigg) d[M]_s \\
& \quad + \int_t^T \left( 2\sigma_s D_s Y_s (X_s - \alpha_s D_s) + \gamma_s (X_s - \alpha_s D_s)^2 (\sigma_s Y_s + Z_s) \right) dM_s \\
& \quad + \int_{(t,T)} \gamma_s (X_s - \alpha_s D_s )^2 dM^\perp_s .
\end{split}
\end{equation}
Since $Y_{T-}=\frac12$ by Lemma~\ref{lem:Y_Tminus_equals_Y_T}, it holds that 
\begin{equation*}
-\left(D_{T-}-\frac{1}{2\alpha_T}X_{T-}\right)X_{T-}=\gamma_TY_{T-}\left(X_{T-}-\alpha_T D_{T-}\right)^2-\frac{\alpha_TD_{T-}^2}{2}.
\end{equation*}
This, \eqref{eq:alpha_T_mult_D_T_squared}, \eqref{eq:prodgammaYwithsquaredcontproc} and the fact that $X_T=0$ show that 
\begin{equation*}
\begin{split}
& \int_{[t,T]} D_{s-} dX_s + \int_{[t,T]} \frac{\gamma_s}{2} d[X]_s \\
& = \int_{[t,T)}D_{s-}dX_s+\int_{[t,T)}\frac{\gamma_s}{2}d[X]_s+\gamma_TY_{T-}\left(X_{T-}-\alpha_T D_{T-}\right)^2-\frac{\alpha_TD_{T-}^2}{2} \\
& = \gamma_tY_{t}\left(x-\alpha_td\right)^2-\frac{\alpha_t d^2}{2} 
+ \int_t^T  \Bigg( 
D_s Y_s \left( 2(X_s-\alpha_s D_s) (\rho_s + \mu_s - \sigma_s^2) + \sigma_s^2 \alpha_s D_s \right) \\
& \qquad + (X_s - \alpha_s D_s)^2 \gamma_s  \frac{
((\rho_s + \mu_s)Y_s+\sigma_sZ_s)^2}{\sigma_s^2 Y_s+\frac{1}{2}\left(2\rho_s +\mu_s-\sigma^2_s\right)} 
+ 2 \sigma_s (\sigma_s Y_s + Z_s) D_s (X_s - \alpha_s D_s)  \\
& \qquad + \alpha_s D_s^2 \frac{2\rho_s + \mu_s - \sigma_s^2}{2}
\Bigg) d[M]_s
+ \int_t^T \Bigg( 
2\sigma_s D_s Y_s (X_s - \alpha_s D_s) + \gamma_s (X_s - \alpha_s D_s)^2 \\
& \qquad  \cdot (\sigma_s Y_s + Z_s) + \alpha_s D_s^2 \frac{\sigma_s}{2}
\Bigg) dM_s
+ \int_{(t,T)} \gamma_s (X_s - \alpha_s D_s )^2 dM^\perp_s.
\end{split}
\end{equation*}
We thus have, with  $\wt{\beta}$ defined by \eqref{eq:def_beta_tilde} and $J$ given by \eqref{eq:cost_fct}, that 
\begin{equation*}
\begin{split}
J_t(x,d,X) &  = 
 \frac{Y_t}{\gamma_t}\left(d-\gamma_t x\right)^2-\frac{d^2}{2 \gamma_t}  \\
& \quad + E_t\Bigg[ \int_t^T  
\frac{1}{\gamma_s} 
\left( \wt{\beta}_s (\gamma_s X_s - D_s ) + D_s  \right)^2
\left( \sigma_s^2 Y_s + \frac12 (2\rho_s + \mu_s - \sigma_s^2 ) \right)
d[M]_s \\
& \quad + \int_t^T \left( 
2\sigma_s D_s Y_s (X_s - \alpha_s D_s) + \gamma_s (X_s - \alpha_s D_s)^2 (\sigma_s Y_s + Z_s) + \alpha_s D_s^2 \frac{\sigma_s}{2}
\right) dM_s \\
& \quad + \int_{(t,T)} \gamma_s (X_s - \alpha_s D_s )^2 dM^\perp_s 
\Bigg] .
\end{split}
\end{equation*}
It therefore remains to show that 
\begin{equation}\label{eq:stoch_int_wrt_M_is_0}
E_t\left[ \int_t^T \left( 
2\sigma_s D_s Y_s (X_s - \alpha_s D_s) + \gamma_s (X_s - \alpha_s D_s)^2 (\sigma_s Y_s + Z_s) + \alpha_s D_s^2 \frac{\sigma_s}{2}
\right) dM_s  \right] = 0
\end{equation}
and 
\begin{equation}\label{eq:stoch_int_wrt_Mperp_is_0}
E_t\left[ \int_{(t,T)} \gamma_s (X_s - \alpha_s D_s )^2 dM^\perp_s \right] = 0 .
\end{equation}
Consider first the stochastic integral $\int_t^T \gamma_s (X_s - \alpha_s D_s)^2 Z_s dM_s$. By the Burkholder-Davis-Gundy inequality it holds that for some constant $c \in (0,\infty)$, 
\begin{equation*}
 E_t\left[ \sup_{r \in [t,T]} \left| \int_t^r \gamma_s (X_s - \alpha_s D_s)^2 Z_s dM_s \right|  \right] 
 \leq c  E_t\left[ \left( \int_t^{T} \gamma_s^2 (X_s - \alpha_s D_s)^4 Z_s^2 d[M]_s \right)^{1/2} \right] .
\end{equation*}
Since $E_t\left[ \int_t^T Z_s^2 d[M]_s \right] < \infty$ and (A1) hold true, it follows from the Cauchy-Schwarz inequality that 
\begin{equation*}
\begin{split}
&  E_t\left[ \sup_{r \in [t,T]} \left| \int_t^r \gamma_s (X_s - \alpha_s D_s)^2 Z_s dM_s \right|  \right] \\
& \leq c E_t\left[ \sup_{s \in [t,T]} \left( \gamma_s (X_s - \alpha_s D_s)^2 \right) \cdot \left( \int_t^{T} Z_s^2 d[M]_s \right)^{1/2} \right] \\
& \leq c \left( E_t\left[ \sup_{s \in [t,T]} \left( \gamma_s^2 (X_s - \alpha_s D_s)^4 \right) \right] \right)^{1/2} \left( E_t\left[ \int_t^{T} Z_s^2 d[M]_s \right] \right)^{1/2} 
 < \infty.
\end{split}
\end{equation*}
Therefore, $\int_t^{\cdot} \gamma_s (X_s - \alpha_s D_s)^2 Z_s dM_s$ is a true martingale, and $E_t\left[ \int_t^{T} \gamma_s (X_s - \alpha_s D_s)^2 Z_s dM_s \right]$ $ = 0$. 
Similarly, $E_t\left[ [M^\perp ]_T \right] < \infty$ and (A1) imply \eqref{eq:stoch_int_wrt_Mperp_is_0}. 
Furthermore, we obtain from (A2) and the fact that $Y$ is bounded that 
$E_t\left[ \left( \int_t^T \gamma_s^2 (X_s - \alpha_s D_s)^4  \sigma_s^2 Y_s^2 d [M]_s \right)^{1/2} \right] < \infty$ 
and hence 
$E_t\left[ \int_t^T \gamma_s (X_s - \alpha_s D_s)^2  \sigma_s Y_s d M_s \right] = 0.$ 
To show 
$E_t\left[ \int_t^T 2  \sigma_s D_{s} Y_s (X_s-\alpha_sD_s) dM_s \right] = 0$ 
observe that by Young's inequality $D_{s}^2 (X_s - \alpha_s D_s)^2 \leq \frac{1}{2} \left(D_{s}^4 \alpha_s^2 + \gamma_s^2 (X_s - \alpha_s D_s)^4\right)$, $s\in [t,T]$. 
This together with $0\leq Y \leq \frac12$, (A3) and (A2) yields 
\begin{equation*}
\begin{split}
& E_t\left[ \left( \int_t^T 4 \sigma_s^2 D_{s}^2 Y_s^2 (X_s-\alpha_sD_s)^2 d[M]_s \right)^{1/2} \right] \\
& \leq \frac{1}{\sqrt{2}} E_t\left[ \left( \int_t^T \sigma_s^2 D_{s}^4 \alpha_s^2 d[M]_s \right)^{1/2} \right] + \frac{1}{\sqrt{2}} E_t\left[ \left( \int_t^T \sigma_s^2 \gamma_s^2 (X_s-\alpha_sD_s)^4 d[M]_s \right)^{1/2} \right] 
 < \infty .
\end{split}
\end{equation*}
Moreover, it follows from (A3) that also 
$E_t\left[ \int_t^T \alpha_s D_s^2  \sigma_s d M_s \right] = 0.$ 
We thus have established \eqref{eq:stoch_int_wrt_M_is_0} and \eqref{eq:stoch_int_wrt_Mperp_is_0}, which completes the proof.
\end{proof}

The following uniqueness result for optimal strategies, which relies on the representation of the cost functional~\eqref{eq:quad_expansion_J}, is applied in the proofs of Lemma~\ref{lem:optstratifzero}, Theorem~\ref{thm:sol_val_fct}, and Proposition~\ref{propo:withoutresiliencecloseimmediately}.

\begin{lemma}\label{lem:uniqueness_of_opt_strat_up_to_Dm_null_sets}
	Suppose $\bm{\left(C_{>0}\right)}$ and $\bm{\left(C_{\textbf{BSDE}}\right)}$ 
	and fix a solution $(Y,Z,M^\perp)$ of BSDE~\eqref{eq:bsde} that satisfies the properties in $\bm{\left(C_{\textbf{BSDE}}\right)}$. Let $\wt\beta$ be the process defined by~\eqref{eq:def_beta_tilde} pertaining to $(Y,Z)$. 
	Let $x,d \in \R$ and $t\in[0,T]$. 
	Assume that $V_t(x,d)= \frac{Y_{t}}{\gamma_t}\left(d-\gamma_t x\right)^2-\frac{d^2}{2\gamma_t}$ 
	and that there exists an optimal strategy in $\cA_t(x,d)$. 
	Then the optimal strategy is unique up to $\PdM|_{[t,T]}$-null sets.
\end{lemma}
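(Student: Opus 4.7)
The plan is to derive a pointwise optimality condition from Theorem~\ref{thm:quadratic_exp_J}, translate it into a linear homogeneous SDE for the difference $\Delta A := A^{*,1} - A^{*,2}$ (where $A^{*,i} := X^{*,i} - \alpha D^{*,i}$ is the process studied in Lemma~\ref{lem:unifieddAcalc}), and then deduce $\Delta A\equiv 0$, from which the desired $\PdM|_{[t,T]}$-uniqueness follows immediately.

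Let $X^{*,1}, X^{*,2}\in\cA_t(x,d)$ be two optimal strategies with associated deviations $D^{*,1}, D^{*,2}$. Applying Theorem~\ref{thm:quadratic_exp_J} to each $X^{*,i}$ and using $J_t(x,d,X^{*,i}) = V_t(x,d) = \frac{Y_t}{\gamma_t}(d-\gamma_t x)^2 - \frac{d^2}{2\gamma_t}$ forces the nonnegative conditional expectation in~\eqref{eq:quad_expansion_J} to vanish a.s. Since $\gamma>0$ and, by $\bm{\left(C_{>0}\right)}$ together with $Y\in[0,1/2]$, the factor $\sigma^2 Y + \tfrac12(2\rho+\mu-\sigma^2)$ is $\PdM$-a.e.\ positive, the integrand itself must vanish $\PdM|_{[t,T]}$-a.e., giving
\begin{equation*}
\wt\beta_s\gamma_s X^{*,i}_s + (1-\wt\beta_s) D^{*,i}_s = 0 \quad \PdM|_{[t,T]}\text{-a.e.}, \quad i=1,2.
\end{equation*}
Multiplying by $\alpha = 1/\gamma$ and using $\alpha D^{*,i} = X^{*,i} - A^{*,i}$, this rewrites as
$X^{*,i} = (1-\wt\beta)A^{*,i}$ and $D^{*,i} = -\gamma\wt\beta A^{*,i}$ $\PdM|_{[t,T]}$-a.e.

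From the proof of Theorem~\ref{thm:quadratic_exp_J} (cf.~the derivation of~\eqref{eq:dyn_cont_proc}), each $A^{*,i}$ is a \emph{continuous} semimartingale with $A^{*,i}_t = x - \alpha_t d$. By Lemma~\ref{lem:unifieddAcalc}, $dA^{*,i}_s = -D^{*,i}_s\,d\alpha_s + \alpha_s\rho_s D^{*,i}_s\,d[M]_s$. Substituting $D^{*,i} = -\gamma\wt\beta A^{*,i}$ (legitimate in the stochastic and Stieltjes integrals, whose values depend on integrands only $\PdM$-a.e.) and expanding $d\alpha$ via~\eqref{eq:dyn_alpha} yields, for $i=1,2$,
\begin{equation*}
dA^{*,i}_s = -\wt\beta_s A^{*,i}_s\,dN_s, \qquad dN_s := (\mu_s - \sigma_s^2 + \rho_s)\,d[M]_s + \sigma_s\,dM_s,
\end{equation*}
where $N$ is a well-defined semimartingale by the standing integrability of $|\mu|+\sigma^2+|\rho|$ against $d[M]$. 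Subtracting, $\Delta A$ is continuous with $\Delta A_t = 0$ and satisfies the homogeneous linear equation $d\Delta A_s = -\wt\beta_s\Delta A_s\,dN_s$.

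The hard part will be to conclude $\Delta A\equiv 0$. Conceptually, if $\mathcal{E}(-\int\wt\beta\,dN)$ were a positive semimartingale, Itô's formula would give $d(\Delta A/\mathcal{E})=0$, and hence $\Delta A\equiv(\Delta A/\mathcal{E})_t\cdot\mathcal{E}=0$. The obstacle is that $\int\wt\beta\,dN$ need not itself be a semimartingale without an a priori integrability bound on $\wt\beta$. The plan is to handle this by localizing along
$\tau_n := \inf\{s\ge t : |\Delta A_s|\vee|\alpha_s\Delta D_s|\ge n\}\wedge T$
(with $\Delta D := D^{*,1}-D^{*,2}$); since $\Delta A$ is continuous and $\alpha\Delta D$ is c\`adl\`ag, $\tau_n\uparrow T$ a.s. On $[t,\tau_n)$ the $\PdM$-a.e.\ identity $\wt\beta\Delta A = -\alpha\Delta D$ gives $|\wt\beta\Delta A|\le n$, which is exactly what makes the stopped integrating-factor argument (equivalently, an Itô expansion of $\Delta A^2$ combined with a time-changed Gronwall bound against $d[M]$) rigorous on $[t,\tau_n]$, yielding $\Delta A\equiv 0$ there. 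Letting $n\to\infty$ gives $\Delta A\equiv 0$ on $[t,T]$, and therefore $X^{*,1} = (1-\wt\beta)A^{*,1} = (1-\wt\beta)A^{*,2} = X^{*,2}$ $\PdM|_{[t,T]}$-a.e., as required.
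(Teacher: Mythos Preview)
Your overall route is exactly the paper's: extract the pointwise optimality condition $\wt\beta(\gamma X^{*,i}-D^{*,i})+D^{*,i}=0$ from the representation in Theorem~\ref{thm:quadratic_exp_J}, feed it into Lemma~\ref{lem:unifieddAcalc} to obtain that both $A^{*,i}=X^{*,i}-\alpha D^{*,i}$ satisfy the same linear SDE $dA=\wt\beta A\,(d\alpha/\alpha-\rho\,d[M])$ with the same initial value, and conclude $A^{*,1}=A^{*,2}$, hence $X^{*,1}=X^{*,2}$ $\PdM|_{[t,T]}$-a.e. The paper simply asserts the uniqueness step (``It follows that $A$ and $A^*$ are indistinguishable''); you are right to flag that, without any integrability of $\wt\beta$, this needs justification.

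However, your proposed fix does not close the gap. Localizing at $\tau_n$ gives $|\wt\beta\,\Delta A|\le n$, but that is \emph{not} what either the integrating-factor or the Gronwall argument needs. For the integrating factor you would need $\int_t^{\tau_n}\wt\beta_s^2\sigma_s^2\,d[M]_s<\infty$ to even define $\mathcal E(-\int\wt\beta\,dN)$; the bound $|\wt\beta\,\Delta A|\le n$ only yields $\int(\wt\beta\,\Delta A)^2\sigma^2\,d[M]\le n^2\int\sigma^2\,d[M]$, which says nothing about $\wt\beta^2\sigma^2$ where $\Delta A$ is small. For the It\^o/Gronwall route, expanding $\Delta A^2$ gives
\[
d(\Delta A)^2=\Delta A^2\bigl[-2\wt\beta(\mu+\rho-\sigma^2)+\wt\beta^2\sigma^2\bigr]\,d[M]-2\wt\beta\,\Delta A^2\,\sigma\,dM,
\]
so the drift coefficient is $\wt\beta^2\sigma^2-2\wt\beta(\mu+\rho-\sigma^2)$, not a bounded multiple of $1$; your localization bounds $|\wt\beta\,\Delta A|$ and $|\Delta A|$, but the drift of $\Delta A^2$ is $\Delta A^2$ times a factor involving $\wt\beta$ and $\wt\beta^2$ unmultiplied by $\Delta A$, and those are precisely the quantities you have not controlled. (A deterministic caricature makes the issue transparent: $y'=\tfrac1t\,y$, $y(0)=0$ admits $y(t)=ct$ for every $c$, and here $\wt\beta y=c$ is even bounded; so the bound $|\wt\beta\,\Delta A|\le n$ is compatible with $\Delta A\not\equiv0$.) In the paper the lemma is in fact only invoked in settings where $\wt\beta$ is bounded (Theorem~\ref{thm:sol_val_fct}) or identically $1$ (Proposition~\ref{propo:withoutresiliencecloseimmediately}), in which case the linear-SDE uniqueness is standard; if you want the statement at the present level of generality, you need a different argument than the one you sketch.
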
 

\begin{proof}
	Let $X^*,X \in \cA_t(x,d)$ be two optimal strategies with associated deviation processes $D^*$ and $D$, respectively. 
	Combine the assumption $V_t(x,d)= \frac{Y_{t}}{\gamma_t}\left(d-\gamma_t x\right)^2-\frac{d^2}{2\gamma_t}$ with Theorem~\ref{thm:quadratic_exp_J} to obtain that 
	\begin{equation*}
	E_t\left[\int_t^T  \frac{1}{\gamma_s} 
	\left( \wt{\beta}_s (\gamma_s X^*_s - D^*_s ) + D^*_s  \right)^2
	\left( \sigma_s^2 Y_s + \frac12 (2\rho_s + \mu_s - \sigma_s^2 ) \right)
	d[M]_s \right] 
	= 0 \text{ a.s.}
	\end{equation*}
	By taking expectations, it follows that 
	\begin{equation*}
	E\left[\int_t^T  \frac{1}{\gamma_s} 
	\left( \wt{\beta}_s (\gamma_s X^*_s - D^*_s ) + D^*_s  \right)^2
	\left( \sigma_s^2 Y_s + \frac12 (2\rho_s + \mu_s - \sigma_s^2 ) \right)
	d[M]_s \right] 
	= 0 . 
	\end{equation*}
	This implies 
	\begin{equation}\label{eq:25022020a1}
	\wt{\beta} (\gamma X^* - D^* ) + D^* = 0 
	\quad \PdM|_{[t,T]}\text{-a.e.}
	\end{equation}
	By Lemma~\ref{lem:unifieddAcalc} this  
	further yields for the process $A^*=(A^*_s)_{s\in[t,T]}$ defined by 
	$A^*_s = X^*_s - \alpha_s D^*_s$, $s \in [t,T],$ that 
	\begin{equation*}
			dA^*_s = \wt{\beta}_s A^*_s\left(\frac{d\alpha_s}{\alpha_s} -  \rho_s d[M]_s\right), \quad s \in [t,T] . 
	\end{equation*}
	For $X$, $D$ and $A=X -\alpha D$ we analogously obtain \eqref{eq:25022020a1} 
	and 
	\begin{equation*}
	dA_s = \wt{\beta}_s A_s\left(\frac{d\alpha_s}{\alpha_s} -  \rho_s d[M]_s\right), \quad s \in [t,T] . 
	\end{equation*}	
	Hence, $A$ and $A^*$ satisfy the same dynamics and have the same starting point $A_t = x-\alpha_t d = A_t^*$. 
	It follows that $A$ and $A^*$ are indistinguishable. 
	Together with \eqref{eq:25022020a1} this yields that $D = -\wt{\beta} \gamma A = -\wt{\beta} \gamma A^* = D^*$ $\PdM|_{[t,T]}$-a.e. 
	Finally, it follows from the definition of $A$ and $A^*$ that $X=X^*$ $\PdM|_{[t,T]}$-a.e.
\end{proof}

\begin{proof}[Proof of Lemma \ref{lem:optstratifzero}]
Suppose that  $x=\frac{d}{\gamma_t}$. Let $X^*=(X^*_s)_{s\in [t,T]}$ be defined by $X^*_{t-}=x$, $X^*_s = 0$, $s \in [t,T]$. Then, $X^*$ is a c\`adl\`ag semimartingale with $X^*_{t-}=x$ and $X^*_T=0$. The associated deviation process $D^*=(D^*_s)_{s\in [t,T]}$ satisfies $D^*_t=d+\Delta D^*_t=d+\gamma_t \Delta X^*_t=d-\gamma_tx=0$ and hence $D^*_s=0$ for all $s\in [t,T]$. 
It follows that $X^*_s - \alpha_s D^*_s = 0$, $s\in [t,T]$, and thus conditions (A1), (A2) and (A3) are satisfied, i.e., $X^* \in \mathcal{A}_t(x,d)$. Since $D^*_s=0$ and $\gamma_s X^*_s - D^*_s = 0$ for all $s\in [t,T]$, Theorem  \ref{thm:quadratic_exp_J} yields that $X^*$ is optimal and that $V_t(x,d) = \frac{Y_{t}}{\gamma_t}\left(d-\gamma_t x\right)^2-\frac{d^2}{2\gamma_t} = -\frac{d^2}{2\gamma_t}$. 
Uniqueness up to $\PdM|_{[t,T]}$-null sets follows from Lemma~\ref{lem:uniqueness_of_opt_strat_up_to_Dm_null_sets}.
\end{proof}

For the proof of Lemma~\ref{lem:Xifbetancadlagsemimart} we need the following technical lemma. 
It provides conditions which ensure that the conditional expectation of the supremum of a process with a certain exponential structure
is a.s. finite (see also Remark~\ref{rem:26032021a1} below).

\begin{lemma}\label{lem:expectation_sup_N_finite}
Suppose that $\bm{\left(C_{[M]}\right)}$ is satisfied. 
Let $\eta=(\eta_s)_{s\in [0,T]}$ and $\nu=(\nu_s)_{s\in [0,T]}$ be progressively measurable processes 
such that $|\eta|\leq c_{\eta}$ and $|\nu| \leq c_{\nu}$ $\PdM$-a.e. 
 for some constants $c_{\eta}, c_{\nu} \in (0,\infty)$. 
 Let $t \in [0,T]$ and define $N=\left( N_s \right)_{s\in[t,T]}$ by 
\begin{equation*}
N_s = \exp\left( \int_t^s \eta_r dM_r + \int_t^s \nu_r d[M]_r \right), \quad s\in [t,T].
\end{equation*}
It then holds that 
\begin{equation*}
E_t\left[ \sup_{s\in[t,T]} N_s \right] 
 \leq 2 \left( E_t\left[ e^{ 6 c_{\eta}^2 \left( [M]_T - [M]_t \right)} \right]  \right)^{1/4} \left( E_t\left[ e^{( 2 c_{\nu} + c_{\eta}^2 ) \left( [M]_T - [M]_t \right)} \right]  \right)^{1/2} 
< \infty \text{ a.s.}
\end{equation*}
\end{lemma}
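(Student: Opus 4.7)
The plan is to split $N$ into a stochastic-exponential factor and an exponential of a process of bounded variation dominated by $\exp(C([M]_T-[M]_t))$, apply Doob's $L^2$ inequality to the former, and then bound the $L^2$-moment of the stochastic exponential at time $T$ by a sharp algebraic rewriting so as to recover exactly the exponent $6c_\eta^2$ and the $1/4$-power in the announced estimate.

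First, I would write
$$N_s=\mathcal E_s\cdot\exp\Bigl(\int_t^s\bigl(\tfrac12\eta_r^2+\nu_r\bigr)d[M]_r\Bigr),\quad\mathcal E_s:=\exp\Bigl(\int_t^s\eta_r\,dM_r-\tfrac12\int_t^s\eta_r^2\,d[M]_r\Bigr),$$
and use $|\eta|\le c_\eta$, $|\nu|\le c_\nu$ to dominate the rightmost factor, uniformly in $s\in[t,T]$, by $\exp((\tfrac12c_\eta^2+c_\nu)([M]_T-[M]_t))$. Taking the supremum in $s$ and applying a conditional Cauchy--Schwarz yields
$$E_t\bigl[\sup_{s\in[t,T]}N_s\bigr]\le\bigl(E_t[\sup_{s\in[t,T]}\mathcal E_s^2]\bigr)^{1/2}\cdot\bigl(E_t[e^{(2c_\nu+c_\eta^2)([M]_T-[M]_t)}]\bigr)^{1/2},$$
which already reproduces the last factor of the claimed inequality.

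Next, I would note that $\mathcal E$ and the analogously defined stochastic exponentials with $\eta$ replaced by $2\eta$ and $4\eta$ are all genuine non-negative martingales: the boundedness of $\eta$ together with $\bm{\left(C_{[M]}\right)}$ verifies Novikov's criterion via $\int_t^T\eta_r^2\,d[M]_r\le c_\eta^2[M]_T$. Conditional Doob's $L^2$ maximal inequality then gives $E_t[\sup_s\mathcal E_s^2]\le 4\,E_t[\mathcal E_T^2]$, so everything reduces to establishing $E_t[\mathcal E_T^2]\le(E_t[e^{6c_\eta^2([M]_T-[M]_t)}])^{1/2}$.

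The main obstacle is precisely this last inequality; an iterated Cauchy--Schwarz on $\mathcal E_T^2$ doubles the stochastic-integral coefficient at each step and does not close. Instead, the key algebraic identity
$$\mathcal E_T^2=\exp\Bigl(2\int_t^T\eta_r\,dM_r-\int_t^T\eta_r^2\,d[M]_r\Bigr)=\sqrt{\mathcal F_T}\cdot\exp\Bigl(3\int_t^T\eta_r^2\,d[M]_r\Bigr),$$
with $\mathcal F_T:=\exp(4\int_t^T\eta_r\,dM_r-8\int_t^T\eta_r^2\,d[M]_r)$ the stochastic exponential of $4\int_t^\cdot\eta_r\,dM_r$ at time $T$, is verified by a direct comparison of exponents. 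Conditional Cauchy--Schwarz then yields
$$E_t[\mathcal E_T^2]\le\sqrt{E_t[\mathcal F_T]}\cdot\sqrt{E_t\bigl[e^{6\int_t^T\eta_r^2\,d[M]_r}\bigr]}\le 1\cdot\sqrt{E_t\bigl[e^{6c_\eta^2([M]_T-[M]_t)}\bigr]},$$
where $E_t[\mathcal F_T]\le 1$ by the supermartingale (in fact, by Novikov, martingale) property of $\mathcal F$, and $\eta^2\le c_\eta^2$ is used in the second factor. Combining this with the Doob estimate and the opening Cauchy--Schwarz inequality produces the constant $2=\sqrt 4$ and the claimed bound; a.s.\ finiteness then follows from one last application of $\bm{\left(C_{[M]}\right)}$ to each of the two expectations on the right-hand side.
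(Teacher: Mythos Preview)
Your proof is correct and follows essentially the same approach as the paper: the same decomposition $N_s=\mathcal E_s\cdot\exp(\int_t^s(\tfrac12\eta_r^2+\nu_r)\,d[M]_r)$, the same Cauchy--Schwarz followed by Doob's $L^2$ inequality, and the same algebraic rewriting $\mathcal E_T^2=\sqrt{\mathcal F_T}\cdot\exp(3\int_t^T\eta_r^2\,d[M]_r)$ with $\mathcal F$ the stochastic exponential of $4\int_t^\cdot\eta_r\,dM_r$, bounded via $E_t[\mathcal F_T]\le1$. The only cosmetic difference is that the paper keeps the supremum of the finite-variation factor inside the conditional expectation before bounding it, whereas you bound it pathwise first; both are equivalent here.
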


\begin{proof}
We introduce the process $L=\left( L_s \right)_{s\in[t,T]}$ defined by 
\begin{equation*}
L_s = \exp\left( \int_t^s \eta_r dM_r - \frac12 \int_t^s \eta_r^2 d[M]_r \right), \quad s\in [t,T].
\end{equation*}
We then have 
\begin{equation*}
N_s = L_s \exp\left( \int_t^s \left( \nu_r + \frac{\eta_r^2}{2} \right) d[M]_r \right), \quad s\in [t,T], 
\end{equation*}
and thus by the Cauchy-Schwarz inequality 
\begin{equation}\label{eq:supN_CS_L_squared}
E_t\left[ \sup_{s\in[t,T]} N_s \right] \leq 
\left( E_t\left[ \sup_{s\in[t,T]} L_s^2 \right]  \right)^{1/2}
\left( E_t\left[ \sup_{s\in[t,T]} \exp\left( \int_t^s \left( 2\nu_r + \eta_r^2 \right) d[M]_r \right) \right] \right)^{1/2} .
\end{equation}
Since $2\nu + \eta^2$ is bounded by $2c_{\nu} + c_{\eta}^2$, 
it holds that 
\begin{equation}\label{eq:sup_exp_2nu_plus_eta_squared_inequ}
E_t\left[ \sup_{s\in[t,T]} \exp\left( \int_t^s \left( 2\nu_r + \eta_r^2 \right) d[M]_r \right) \right] 
\leq E_t\left[ e^{( 2 c_{\nu} + c_{\eta}^2 ) \left( [M]_T - [M]_t \right)} \right] .
\end{equation}
Next, observe that 
\begin{equation*}
E_t\left[ \exp\left( \frac12 \int_t^T \eta_r^2 d[M]_r  \right) \right] < \infty \text{ a.s.}
\end{equation*}
because $\eta^2$ is bounded and $\bm{\left(C_{[M]}\right)}$ is assumed to hold. Therefore, we obtain by Novikov's criterion that $L$ is a true martingale. Thus, it follows from Doob's maximal inequality that 
\begin{equation}\label{eq:Doob_L_squared}
\left(E_t\left[  \sup_{s\in[t,T]} L_s^2  \right]\right)^{1/2} \leq 2 \left(E_t\left[  L_T^2  \right]\right)^{1/2}.
\end{equation}
We define $\wt{L} = \left( \wt{L}_s \right)_{s\in[t,T]}$ by 
\begin{equation*}
\wt{L}_s = \exp\left( \int_t^s 4 \eta_r dM_r - \frac12 \int_t^s \left( 4 \eta_r \right)^2 d[M]_r \right), \quad s\in [t,T],
\end{equation*}
and observe that by the Cauchy-Schwarz inequality it holds 
\begin{equation}\label{eq:L_squared_CS_L_tilde}
\begin{split}
E_t\left[  L_T^2  \right] 
& = E_t\left[  \exp\left( \int_t^T 2\eta_r dM_r - \int_t^T 4 \eta_r^2 d[M]_r \right) \exp\left( \int_t^T 3\eta_r^2 d[M]_r \right) \right] \\
& \leq \left(  E_t\left[ \wt{L}_T \right]  \right)^{1/2} \left(  E_t\left[ \exp\left( \int_t^T 6\eta_r^2 d[M]_r \right) \right]  \right)^{1/2} .
\end{split}
\end{equation}
As a nonnegative local martingale, $\wt L$ is a supermartingale, hence $E_t[ \wt{L}_T ]\le\wt L_t=1$. 
Together with the fact that $\eta^2$ is bounded by $c_{\eta}^2$, we obtain from \eqref{eq:L_squared_CS_L_tilde} that 
\begin{equation}\label{eq:L_squared_leq_exp_6eta_squared}
E_t\left[  L_T^2  \right] 
 \leq \left(  E_t\left[ e^{ 6 c_{\eta}^2 \left( [M]_T - [M]_t \right)} \right] \right)^{1/2} .
\end{equation}
It follows from \eqref{eq:supN_CS_L_squared}, \eqref{eq:sup_exp_2nu_plus_eta_squared_inequ}, \eqref{eq:Doob_L_squared} and \eqref{eq:L_squared_leq_exp_6eta_squared} that 
\begin{equation*}
E_t\left[ \sup_{s\in[t,T]} N_s \right] 
 \leq 2 \left( E_t\left[ e^{ 6 c_{\eta}^2 \left( [M]_T - [M]_t \right)} \right]  \right)^{1/4} \left( E_t\left[ e^{( 2 c_{\nu} + c_{\eta}^2 ) \left( [M]_T - [M]_t \right)} \right]  \right)^{1/2} .
\end{equation*}
This is a.s. finite due to $\bm{\left(C_{[M]}\right)}$ .
\end{proof}

\begin{remark}\label{rem:26032021a1}
The same computations also yield $E_t\left[ \sup_{s\in[t,T]} N_s \right] < \infty$ a.s.\ 
whenever we replace $\bm{\left(C_{[M]}\right)}$ and the boundedness assumption on $\eta$ and $\nu$ by 
\begin{itemize}
\item $E_t\left[\exp\left\{\int_t^T6\eta_s^2\,d[M]_s\right\}\right]<\infty$ a.s.\ and
\item $E_t\left[\exp\left\{\int_t^T(2\nu_s+\eta_s^2)^+\,d[M]_s\right\}\right]<\infty$ a.s.
\end{itemize}
This observation is used in Remark~\ref{rem:26032021a2}
to justify the claim in part~(b) of Remark~\ref{rem:interpretation_Y}.
\end{remark}

In the next lemma we show how to construct from a $\PdM$-a.e.\ bounded sequence $(\beta^n)_{n \in \N}$ of c\`adl\`ag semimartingales a sequence of admissible strategies $(X^n)_{n\in\N}$ (see~\eqref{eq:def_X_n} below) with the additional properties~\eqref{eq:D_n_equals_minus_beta_n_bracket} and~\eqref{eq:E_t_sup_gamma4bracket8_finite}. We use this result in the proof of Theorem~\ref{thm:sol_val_fct}.

\begin{lemma}\label{lem:Xifbetancadlagsemimart}
Suppose that  $\bm{\left(C_{>0}\right)}$, $\bm{\left(C_{\textbf{BSDE}}\right)}$ and $\bm{\left(C_{[M]}\right)}$ are satisfied. 
Assume that $\rho$ and $\mu$ are $\PdM$-a.e.\ bounded. 
Let $(\beta^n)_{n \in \N}$ be a sequence of c\`adl\`ag semimartingales $\beta^n = (\beta^n_s)_{s\in[0,T]}$ that are $\PdM$-a.e. bounded uniformly in $n$. 
Let $t\in [0,T]$ and $x,d \in \R$. 
Define for each $n\in\N$ the process $X^n=(X^n_s)_{s\in [t,T]}$ by 
 $X^n_{t-}=x$, 
\begin{equation}\label{eq:def_X_n}
X^n_s = \left(x-\frac{d}{\gamma_t}\right)\cE(Q^n)_{t,s}\,(1-\beta_s^n),\quad s\in [t,T),
\end{equation}
 and $X^n_T=0$, where 
\begin{equation*}
Q_s^n = - \int_0^s \beta_r^n \sigma_r dM_r - \int_0^s \beta_r^n (\mu_r + \rho_r - \sigma_r^2) d[M]_r, \quad s\in [0,T].
\end{equation*} 
Then:
\begin{enumerate}
\item $X^n \in \cA_t(x,d)$ for all $n\in\N$;
\item For all $n\in\N$ the associated deviation process $D^n$
(i.e., the one satisfying~\eqref{eq:deviation_dyn} with $X$ replaced by $X^n$) a.s.\ has the representations
\begin{equation}\label{eq:D_n_equals_minus_beta_n_bracket}
D^n_s = -\beta^n_s (\gamma_s X^n_s - D^n_s), 
\quad s\in[t,T),
\end{equation}
and
\begin{equation}\label{eq:07052020a1}
D^n_s = \left(x-\frac{d}{\gamma_t}\right)\cE(Q^n)_{t,s}\,(-\gamma_s\beta_s^n),\quad s\in [t,T),
\end{equation}
and, for the terminal value $D^n_T$, we have
$D^n_T=\left(x-\frac{d}{\gamma_t}\right)\cE(Q^n)_{t,T}\,(-\gamma_T)$.
\item It holds
\begin{equation}\label{eq:E_t_sup_gamma4bracket8_finite}
\sup_{n\in\N} E_t\left[ \sup_{s\in[t,T]} \left( \gamma_s^4 (X_s^n-\alpha_s D_s^n)^8 \right) \right] < \infty \text{\, a.s.}
\end{equation}
\end{enumerate}
\end{lemma}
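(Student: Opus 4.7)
The central auxiliary object is $\lambda^n_s:=(x-d/\gamma_t)\,\cE(Q^n)_{t,s}$, $s\in[t,T]$. Since $Q^n$ is continuous, $\lambda^n$ is a continuous semimartingale satisfying $d\lambda^n_s=\lambda^n_s\,dQ^n_s$ with $\lambda^n_t=x-d/\gamma_t$. By the definition in~\eqref{eq:def_X_n}, $X^n_s=\lambda^n_s(1-\beta^n_s)$ on $[t,T)$, so $X^n$ is manifestly a c\`adl\`ag semimartingale. My plan for part~2 is to define the candidate
\begin{equation*}
\tilde D^n_{t-}=d,\quad \tilde D^n_s=-\lambda^n_s\gamma_s\beta^n_s\;(s\in[t,T)),\quad \tilde D^n_T=-\lambda^n_T\gamma_T,
\end{equation*}
and verify that $\tilde D^n$ solves the deviation dynamics~\eqref{eq:deviation_dyn} driven by $X^n$ with $\tilde D^n_{t-}=d$. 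Since that SDE is linear in $D$ and admits the explicit unique solution~\eqref{eq:deviation_def}, this forces $D^n=\tilde D^n$, yielding~\eqref{eq:07052020a1} and the stated value at $T$. Relation~\eqref{eq:D_n_equals_minus_beta_n_bracket} is then the algebraic identity $\gamma_sX^n_s-D^n_s=\gamma_s\lambda^n_s(1-\beta^n_s)+\gamma_s\lambda^n_s\beta^n_s=\gamma_s\lambda^n_s$, combined with $-\beta^n_s\gamma_s\lambda^n_s=D^n_s$.

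The SDE verification is the main technical obstacle. At the initial time one checks directly that $\tilde D^n_t-d=\gamma_t[(x-d/\gamma_t)(1-\beta^n_t)-x]=\gamma_t\,\Delta X^n_t$, matching~\eqref{eq:deviation_dyn} because $[M]$ and $\gamma$ are continuous; the jump at $T$ is handled analogously. On $(t,T)$ I would apply integration by parts to $\tilde D^n=-\lambda^n\gamma\beta^n$, using $d\lambda^n=\lambda^n\,dQ^n$, \eqref{eq:dyn_gamma}, the continuity of $\lambda^n$ and $\gamma$, and the covariations $d[\lambda^n,\gamma]=-\lambda^n\gamma\beta^n\sigma^2\,d[M]$ and $d[\lambda^n\gamma,\beta^n]=\lambda^n\gamma(1-\beta^n)\sigma\,d[M,\beta^{n,c}]$, where $\beta^{n,c}$ denotes the continuous local martingale part of $\beta^n$. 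A parallel expansion of $\gamma\,dX^n+d[\gamma,X^n]-\rho\tilde D^n\,d[M]$ then must be matched coefficient-wise in $dM$, $d[M]$, $d\beta^n$, and $d[M,\beta^{n,c}]$; the $\sigma^2$-contributions cancel between $d[\lambda^n,\gamma]$ and $d[\gamma,X^n]$, and the $d[M]$-coefficient on both sides simplifies to $\lambda^n\gamma\beta^n[\beta^n\mu-\mu+\beta^n\rho]$. This bookkeeping is tedious but each individual coefficient match is a one-line calculation; the decomposition $\beta^n=\beta^{n,c}+(\beta^n-\beta^{n,c})$ allows the jump and continuous-martingale contributions of $\beta^n$ to be tracked separately, and no moment assumptions on $\beta^n$ are needed for this step.

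For part~3, note that $X^n_s-\alpha_sD^n_s=\lambda^n_s$ by direct substitution of the closed-form expressions. Combining the exponential representations of $\gamma_s$ and of $\cE(Q^n)_{t,s}$ gives
\begin{equation*}
\gamma_s^4\bigl(X^n_s-\alpha_sD^n_s\bigr)^8=\gamma_t^4\!\left(x-\tfrac{d}{\gamma_t}\right)^{\!8}\exp\!\left\{\int_t^s\eta^n_r\,dM_r+\int_t^s\nu^n_r\,d[M]_r\right\},
\end{equation*}
with $\eta^n_r=4(1-2\beta^n_r)\sigma_r$ and $\nu^n_r=4\mu_r-2\sigma_r^2-8\beta^n_r(\mu_r+\rho_r-\sigma_r^2)-4(\beta^n_r)^2\sigma_r^2$. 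Under the hypotheses, $\bm{\left(C_{\text{bdd}}\right)}$ together with $\bm{\left(C_{> 0}\right)}$ gives $\sigma^2\le 2\bar\rho+\bar\mu$ $\PdM$-a.e., and the uniform boundedness of $(\beta^n)$ produces constants $c_\eta,c_\nu\in(0,\infty)$ independent of $n$ with $|\eta^n|\le c_\eta$, $|\nu^n|\le c_\nu$ $\PdM$-a.e. Lemma~\ref{lem:expectation_sup_N_finite} then provides an $\cF_t$-measurable a.s.\ finite bound for the conditional expectation of the supremum of the exponential, depending on $n$ only through $c_\eta,c_\nu$ and the a.s.\ finite quantities $E_t[e^{c([M]_T-[M]_t)}]$ furnished by $\bm{\left(C_{[M]}\right)}$; multiplying by the $\cF_t$-measurable prefactor proves~\eqref{eq:E_t_sup_gamma4bracket8_finite}. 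Part~1 then follows: (A1) is weaker than~\eqref{eq:E_t_sup_gamma4bracket8_finite} by Jensen; the boundedness of $\sigma^2$ together with $\bm{\left(C_{[M]}\right)}$ gives $E_t[\int_t^T\sigma^2_s\,d[M]_s]<\infty$ a.s., so (A2) follows from (A1) by Cauchy--Schwarz; and (A3) is obtained by the same exponential-representation argument applied to $\alpha_s^2(D^n_s)^4=\gamma_s^2(\lambda^n_s)^4(\beta^n_s)^4$, whose structure and bounded exponents are preserved.
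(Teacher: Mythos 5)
Your proposal is correct, and parts 1 and 3 follow the paper's argument essentially verbatim (same reduction $X^n_s-\alpha_sD^n_s=\lambda^n_s$, same exponents $\eta^n=4\sigma-8\beta^n\sigma$ and $\nu^n=4\mu-2\sigma^2-8\beta^n(\mu+\rho-\sigma^2)-4(\beta^n)^2\sigma^2$, same appeal to Lemma~\ref{lem:expectation_sup_N_finite}, and the same derivations of (A1)--(A3)). Where you genuinely diverge is the key step of part~2. The paper does not verify the candidate deviation process in the SDE~\eqref{eq:deviation_dyn} at all: it invokes Lemma~\ref{lem:unifieddAcalc}, which says that for \emph{any} strategy the process $A=X-\alpha D$ satisfies the autonomous linear equation $dA_s=(A_s-X_s)\bigl(\tfrac{d\alpha_s}{\alpha_s}-\rho_s\,d[M]_s\bigr)$, checks that $\wh A^n_s=(x-d/\gamma_t)\cE(Q^n)_{t,s}$ satisfies the same equation with the same initial value (using only $X^n=\wh A^n(1-\beta^n)$), and concludes $A^n=\wh A^n$ by uniqueness; the two representations of $D^n$ then drop out algebraically. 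This completely sidesteps the semimartingale decomposition of $\beta^n$ — no $d\beta^n$, no $\beta^{n,c}$, no jump bookkeeping — because all $\beta^n$-dependence is absorbed into the identity $A^n_s-X^n_s=\wh A^n_s\beta^n_s$. Your route — direct integration by parts on $\tilde D^n=-\lambda^n\gamma\beta^n$ and coefficient matching in $dM$, $d[M]$, $d\beta^n$ and $d[M,\beta^{n,c}]$, followed by uniqueness of the linear deviation SDE — does work (I checked your $d[M]$-coefficient $\lambda^n\gamma\beta^n(\beta^n\mu-\mu+\beta^n\rho)$ and it matches both sides, and the jump conditions at $t$, at interior jump times of $\beta^n$, and at $T$ are all consistent), but it is considerably more laborious and error-prone, and it forfeits the reuse of Lemma~\ref{lem:unifieddAcalc}, which the paper also needs in the uniqueness arguments (Lemma~\ref{lem:uniqueness_of_opt_strat_up_to_Dm_null_sets}) and in the proof of Theorem~\ref{thm:sol_val_fct}(ii). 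In short: your plan is sound and self-contained, while the paper's is shorter and structurally better integrated.
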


\begin{proof}
Denote the constants that bound $|\rho|$ and $|\mu|$ $\PdM$-a.e.\ by $c_{\rho}$ and $c_{\mu}$ respectively. Note that, due to $\bm{\left(C_{>0}\right)}$, $\sigma$ is  $\PdM$-a.e.\ bounded by $c_{\sigma}=\sqrt{2c_{\rho}+c_{\mu}}$. 
Let $b \in (0,\infty)$ such that for all $n \in \N$ it holds $|\beta^n| \leq b$ $\PdM$-a.e. Now, fix $n\in\N$. 
Since $\beta^n$ is a c\`adl\`ag semimartingale, it holds that $X^n$ defined by \eqref{eq:def_X_n} is also a c\`adl\`ag semimartingale.  
Note that moreover $X^n_{t-}=x$ and $X^n_T=0$. We denote by $D^n=(D^n_s)_{s\in [t,T]}$ the associated deviation process.
Let $\wh{A}^n=(\wh{A}^n_s)_{s\in [t,T]}$ 
be the process defined by
\begin{equation*}
\begin{split}
\wh{A}^n_s & = \left(x-\frac{d}{\gamma_t}\right) \mathcal{E}(Q^n)_{t,s}
, \quad s\in [t,T].
\end{split}
\end{equation*}
Observe that 
for all $s\in [t,T)$ it holds $X^n_s=\wh{A}^n_s(1-\beta_s^n)$. Together with \eqref{eq:dyn_alpha} it follows that 
\begin{equation*}
d\wh{A}^n_s=\beta_s^n \wh{A}^n_s \left(\frac{d\alpha_s}{\alpha_s}-\rho_sd[M]_s\right)=(X^n_s-\wh{A}^n_s)\left(-\frac{d\alpha_s}{\alpha_s}+\rho_sd[M]_s\right), \quad s\in [t,T].
\end{equation*}
Let $A^n=(A^n_s)_{s\in [t,T]}$ be the process defined by $A^n_s=X^n_s-\frac{D^n_s}{\gamma_s}$, $s\in [t,T]$. 
Then it holds by Lemma~\ref{lem:unifieddAcalc} that 
$\wh{A}^n$ and $A^n$ satisfy the same dynamics and start in the same point $\wh{A}^n_t=x-\frac{d}{\gamma_t}=A^n_t$ at time $t$. Consequently, they are indistinguishable, i.e.,
almost surely, for all $s\in[t,T]$, it holds $A^n_s=\wh{A}^n_s$.
This implies that
\begin{equation}\label{eq:07052020a2}
D^n_s=\gamma_s (X_s^n-A_s^n)=\gamma_s(X^n_s-\wh{A}^n_s)=-\beta_s^n\gamma_s\wh{A}^n_s,\quad s\in[t,T),
\end{equation}
and, proceeding further,
$$
D^n_s=-\beta_s^n\gamma_s A^n_s=-\beta_s^n(\gamma_s X^n_s-D^n_s),\quad s\in[t,T).
$$
We thus establish~\eqref{eq:D_n_equals_minus_beta_n_bracket},
while \eqref{eq:07052020a1} follows from~\eqref{eq:07052020a2}.
For the terminal value $D^n_T$, we have
\begin{align*}
D^n_T
&=D^n_{T-}+\gamma_T\Delta X^n_T
=D^n_{T-}-\gamma_TX^n_{T-}\\
&=\left(x-\frac d{\gamma_t}\right)\cE(Q^n)_{t,T}\left[-\gamma_T\beta^n_{T-}-\gamma_T(1-\beta^n_{T-})\right]
=\left(x-\frac d{\gamma_t}\right)\cE(Q^n)_{t,T}\,(-\gamma_T).
\end{align*}
Furthermore, it follows from $A^n_s=\wh{A}^n_s$, $s\in [t,T]$, that 
\begin{equation*}
	\gamma_s^4 (X^n_s - \alpha_s D_s^n )^8 = \gamma_s^4 (x-\alpha_t d)^8 \left( \mathcal{E}(Q^n)_{t,s} \right)^8, \quad s \in [t,T]. 
\end{equation*}
Note that 
\begin{equation*}
\gamma_s = \gamma_t \exp\left( \int_t^s \mu_r d[M]_r + \int_t^s \sigma_r dM_r - \frac12 \int_t^s \sigma_r^2 d[M]_r \right), \quad s \in [t,T] .
\end{equation*}
Therefore, we have  
\begin{equation*}
\begin{split}
 E_t\Bigg[ \sup_{s\in [t,T]} ( \gamma_s^4  ( X_s^n -  \alpha_s & D_s^n )^8 ) \Bigg] 
 = \gamma_t^4 (x-\alpha_t d)^8  
E_t\Bigg[ \sup_{s\in [t,T]} \exp\Bigg(  \int_t^s (4\sigma_r - 8\beta_r^n \sigma_r) dM_r  \\
& + \int_t^s \left( -8\beta_r^n (\mu_r + \rho_r - \sigma_r^2 ) - 4 (\beta_r^n)^2 \sigma_r^2 + 4\mu_r - 2\sigma_r^2 \right) d[M]_r  \Bigg) \Bigg] .
\end{split}
\end{equation*} 
Define $\eta^n=4\sigma-8\beta^n\sigma$ and $\nu^n=-8\beta^n (\mu + \rho - \sigma^2 ) - 4 (\beta^n)^2 \sigma^2 + 4\mu - 2\sigma^2$. Since $\bm{\left(C_{[M]}\right)}$ holds and we have $|\eta^n| \leq 4c_{\sigma} + 8 b c_{\sigma}$ and $|\nu^n| \leq 8 b (c_{\mu}+c_{\rho} + c_{\sigma}^2) + 4 b^2 c_{\sigma}^2 + 4 c_{\mu} + 2 c_{\sigma}^2$, we obtain \eqref{eq:E_t_sup_gamma4bracket8_finite} from Lemma \ref{lem:expectation_sup_N_finite}. 
Observe furthermore that by Jensen's inequality it follows that (A1) holds true. 
In order to show (A2), note that 
$\bm{\left(C_{[M]}\right)}$ and $|\sigma|\leq c_{\sigma}$ $\PdM$-a.e.  
imply that $E_t\left[ \int_t^T \sigma_s^2 d[M]_s \right] < \infty$ a.s. This, (A1) and the Cauchy-Schwarz inequality prove that (A2) is satisfied. 
It follows from \eqref{eq:D_n_equals_minus_beta_n_bracket} that $\alpha_s^2 (D_s^n )^4 =  (\beta_s^n)^4 \gamma_s^2 \left( X_s^n - \alpha_s D_s^n \right)^4$, $s\in[t,T)$. Since $\beta^n$ is $\PdM$-a.e. bounded, the fact that (A2) is satisfied hence already implies that (A3) holds true as well. We have thus shown that $X^n \in \cA_t(x,d)$.
\end{proof}

In the following lemma we obtain the existence of an approximating c\`adl\`ag semimartingale sequence $(\beta^n)_{n \in \N}$ for any progressively measurable, $\PdM$-a.e. bounded process $\beta$. 
This enables us to exploit Lemma~\ref{lem:Xifbetancadlagsemimart} for the proof of the representation of the value function in Theorem~\ref{thm:sol_val_fct}.

\begin{lemma}\label{lem:approx_beta_by_cadlag_semimart_beta_n} 
Assume that $E\left[ [M]_T \right] < \infty$ and suppose that $\beta=(\beta_s)_{s\in[0,T]}$ is a progressively measurable process that is bounded $\PdM$-a.e. 

Then there exists a sequence $(\beta^n)_{n \in \N}$ of c\`adl\`ag semimartingales $\beta^n = (\beta^n_s)_{s\in[0,T]}$ that are $\PdM$-a.e. bounded uniformly in $n$ and such that for all $p\in[1,\infty)$ it holds 
$E\left[ \int_0^T | \beta_s - \beta^{n}_s |^p d[M]_s \right] \to 0$ as $n \to \infty$.
\end{lemma}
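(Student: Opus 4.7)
Without loss of generality I may assume $|\beta|\le b$ everywhere for some constant $b\in(0,\infty)$. The plan is to approximate $\beta$ in $L^2(\PdM)$ by bounded simple càdlàg adapted processes (which, being adapted and of finite variation, are semimartingales) and then upgrade to $L^p(\PdM)$-convergence for all $p\in[1,\infty)$ using the uniform bound.

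For the $L^2$-approximation I will rely on the standard fact from stochastic-integration theory: since $E[[M]_T]<\infty$, bounded simple \emph{predictable} processes of the form $H=\sum_{i=0}^{N-1}\xi_i\,1_{(t_i,t_{i+1}]}$, with $0=t_0<t_1<\dots<t_N=T$ and $\xi_i\in L^\infty(\cF_{t_i})$, are dense in $L^2(\PdM)$ among the progressively measurable processes (cf., e.g., Proposition~3.2.6 in Karatzas--Shreve). Applied to our $\beta$, this produces a sequence $H^n$ of such simple predictable processes with $H^n\to\beta$ in $L^2(\PdM)$; truncating the coefficients $\xi_i^n$ at $\pm b$, I may further arrange $|H^n|\le b$.

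Next, for each left-continuous $H^n$ I define the càdlàg counterpart
\[
\beta^n := \sum_{i=0}^{N^n-1}\xi_i^n\,1_{[t_i^n,t_{i+1}^n)} + \xi^n_{N^n-1}\,1_{\{T\}},
\]
which is bounded by $b$, adapted, càdlàg and of finite variation, hence a càdlàg semimartingale. The processes $H^n$ and $\beta^n$ coincide outside the finite set of times $\{t_0^n,\dots,t_{N^n}^n\}$, and continuity of $M$ implies continuity of $[M]$, so each slice $\Omega\times\{t_i^n\}$ is $\PdM$-null. Therefore $\beta^n=H^n$ $\PdM$-a.e., and in particular $\beta^n\to\beta$ in $L^2(\PdM)$. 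The uniform bound $|\beta-\beta^n|\le 2b$ then upgrades this to $L^p$-convergence for every $p\in[1,\infty)$: for $p\ge 2$ estimate $|\beta-\beta^n|^p\le(2b)^{p-2}|\beta-\beta^n|^2$, while for $p\in[1,2)$ estimate $|\beta-\beta^n|^p\le (2b)^{p-1}|\beta-\beta^n|$ and apply the Cauchy--Schwarz inequality together with $E[[M]_T]<\infty$.

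The main obstacle is the invocation of the density result in the second step for \emph{progressively measurable} (rather than merely predictable) integrands. This is standard for continuous integrators because, $P$-a.s., $d[M]$ has no atoms, which allows one to pass from a progressive representative to a predictable one without altering the $\PdM$-equivalence class; once restricted to predictable processes, density of bounded simple predictable step processes is the usual starting point of the Itô integral construction. Everything afterwards---the switch from left-continuous to càdlàg steps and the $L^2\!\to\!L^p$ upgrade---is elementary.
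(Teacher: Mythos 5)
Your proof is correct and follows essentially the same route as the paper: approximate $\beta$ in $L^2(\PdM)$ by bounded simple (c\`agl\`ad) step processes via the standard density result in Section~3.2 of Karatzas--Shreve, pass to the c\`adl\`ag versions (which agree with the c\`agl\`ad ones $\PdM$-a.e.\ because $[M]$ is continuous), truncate at $\pm b$, and upgrade from $L^2$ to $L^p$ using the uniform bound together with $E[[M]_T]<\infty$. The paper realizes the c\`adl\`ag modification by taking right limits and truncates afterwards rather than before, but these are cosmetic differences.
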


\begin{proof}
It follows from Lemma 2.7 in Section 3.2 of \cite{karatzasshreve} 
that there exists a sequence $(\wh{\beta}^n)_{n \in \N}$ of
(c\`agl\`ad)
simple processes $\wh{\beta}^n = (\wh{\beta}^n_s)_{s\in[0,T]}$ such that 
$E\left[ \int_0^T | \beta_s - \wh{\beta}^n_s |^2 d[M]_s \right] \to 0$ as $n \to \infty$. 
Define $\mathring{\beta}^n_s(\omega) = \lim_{r\downarrow s} \wh{\beta}^n_r(\omega)$, $s\in[0,T], \omega \in \Omega, n \in \N$.  
Let $b \in (0,\infty)$ be such that $|\beta|\leq b$ $\PdM$-a.e. and define, for each $n \in \mathbb{N}$, $\beta^n$ by  
$\beta^n_s (\omega) = \left( \mathring{\beta}^n_s(\omega) \wedge b \right) \vee (-b)$, $s\in[0,T]$, $\omega \in \Omega$. 
Since $|\beta^n_s(\omega)| \leq b$ for all $s\in[0,T], \omega \in \Omega, n \in \N$, and $\mathring{\beta}^n$ is c\`adl\`ag for all $n\in\N$, it follows that $(\beta^n)_{n \in \N}$ is a sequence of c\`adl\`ag semimartingales that are $\PdM$-a.e. bounded uniformly in $n$. 
Furthermore, since 
$| \beta - \beta^n| \leq | \beta - \mathring{\beta}^n |$ and 
$\mathring{\beta}^n = \wh{\beta}^n $ $\PdM$-a.e., 
we have that 
\begin{equation*}
E\left[ \int_0^T |\beta_s - \beta^n_s |^2 d[M]_s \right] \leq  E\left[ \int_0^T |\beta_s - \mathring{\beta}^n_s |^2 d[M]_s \right] = E\left[ \int_0^T |\beta_s - \wh{\beta}^n_s |^2 d[M]_s \right] \to 0
\end{equation*}
as $n \to \infty$. For $p\in[1,2)$, the convergence $E\left[ \int_0^T | \beta_s - \beta^{n}_s |^p d[M]_s \right] \xrightarrow[n\to\infty]{} 0$ follows from  Jensen's inequality, and for $p\in (2,\infty)$, the convergence holds due to $|\beta - \beta^n | \leq 2b$ $\PdM$-a.e.
\end{proof}

\begin{proof}[Proof of Theorem~\ref{thm:sol_val_fct}] 
(i) We first prove the representation of the value function. 

Let $t\in[0,T]$ and $x,d\in\R$. 
Since $\wt{\beta}$ is $\PdM$-a.e. bounded and we assume $\bm{\left(C_{[M]}\right)}$, it follows from Lemma  \ref{lem:approx_beta_by_cadlag_semimart_beta_n} that there exists a sequence $(\beta^n)_{n \in \N}$ of c\`adl\`ag semimartingales $\beta^n = (\beta^n_s)_{s\in[0,T]}$ that are $\PdM$-a.e. bounded uniformly in $n$ 
and such that for all $p\in[1,\infty)$ it holds 
\begin{equation}\label{eq:beta_n_approx_beta_tilde}
E_t\left[ \int_t^T | \tilde{\beta}_s - \beta^{n}_s |^p d[M]_s \right] \to 0 \text{ in } L^1(P) \text{ as } n \to \infty.
\end{equation}
In particular, by passing to a suitable subsequence, we can obtain the almost sure convergence in~\eqref{eq:beta_n_approx_beta_tilde}.
We further obtain from Lemma \ref{lem:Xifbetancadlagsemimart} 
that for each $n \in \N$ there exists $X^n \in \cA_t(x,d)$ such that
 $D^n_s = - \beta^n_s (\gamma_s X^n_s - D^n_s)$, $s\in[t,T)$, where $D^n$ denotes the deviation process associated to $X^n$, and that 
\begin{equation}\label{eq:uniformboundforexpectationofsuplikeA1squared}
\sup_{n\in\N} E_t\left[ \sup_{s\in[t,T]} \left( \gamma_s^4 (X_s^n-\alpha_s D_s^n)^8 \right) \right] < \infty \text{\, a.s.}
\end{equation} 
It then holds $\wt{\beta}_s (\gamma_s X^n_s - D^n_s) + D^n_s = (\wt{\beta}_s - \beta^n_s) (\gamma_s X^n_s - D^n_s)$, $s \in [t,T)$. Together with Theorem~\ref{thm:quadratic_exp_J} and $X^n\in\cA_t(x,d)$ this implies that for all $n\in\mathbb{N}$  
\begin{equation}\label{eq:upper_bound_for_value_fct_by_beta_n}
\begin{split}
 V_t(x,d)  \leq J_t(x,d,X^n) 
& = \frac{Y_{t}}{\gamma_t}\left(d-\gamma_t x\right)^2-\frac{d^2}{2\gamma_t} 
 + E_t\Bigg[\int_t^T  \frac{1}{\gamma_s} (\wt{\beta}_s - \beta_s^n )^2 (\gamma_s X^n_s - D^n_s )^2 \\
& \qquad \cdot 
\left( \sigma_s^2 Y_s + \frac12 (2\rho_s + \mu_s - \sigma_s^2 ) \right)
d[M]_s \Bigg] \text{ a.s.}
\end{split}
\end{equation}
By the Cauchy-Schwarz inequality we have that for all $n\in\mathbb{N}$ 
\begin{equation}\label{eq:R_n_estim}
\begin{split} 
& E_t\left[\int_t^T \frac{1}{\gamma_s} (\wt{\beta}_s - \beta_s^n )^2 (\gamma_s X^n_s - D^n_s )^2 d[M]_s \right] 
= E_t\left[\int_t^T  \gamma_s (\wt{\beta}_s - \beta_s^n )^2 ( X^n_s - \alpha_s D^n_s )^2
d[M]_s \right] \\
& \leq \left( E_t\left[ \int_t^T \gamma_s^2 (X_s^n-\alpha_s D_s^n)^4 d[M]_s \right] \right)^{1/2} \left( E_t\left[ \int_t^T (\wt{\beta}_s - \beta_s^n )^4 d[M]_s \right] \right)^{1/2} .
\end{split}
\end{equation}
Moreover, we have that for all $n \in \N$ 
\begin{equation}\label{eq:expectationbddunifinnas}
\begin{split}
E_t\Bigg[ \int_t^T \gamma_s^2 (X_s^n & - \alpha_s D_s^n)^4 d[M]_s \Bigg] 
\leq E_t\left[ \sup_{s\in[t,T]} \left( \gamma_s^2 (X_s^n-\alpha_s D_s^n)^4 \right) \left( [M]_T - [M]_t \right) \right] \\
& \leq \left(  E_t\left[  \sup_{s\in[t,T]} \left( \gamma_s^4 (X_s^n-\alpha_s D_s^n)^8 \right) \right] \right)^{1/2} \left( E_t\left[ \left( [M]_T - [M]_t \right)^2 \right]  \right)^{1/2} .
\end{split}
\end{equation}  
Since $\rho,\mu,\sigma$ and $Y$ are bounded, it follows from $\bm{\left(C_{[M]}\right)}$, \eqref{eq:uniformboundforexpectationofsuplikeA1squared}, \eqref{eq:expectationbddunifinnas},  \eqref{eq:beta_n_approx_beta_tilde} and \eqref{eq:R_n_estim}
that, along a suitable subsequence, the right-hand side
of~\eqref{eq:upper_bound_for_value_fct_by_beta_n} tends to $\frac{Y_{t}}{\gamma_t}\left(d-\gamma_t x\right)^2-\frac{d^2}{2\gamma_t}$ a.s., as $n\to \infty$.
We obtain the inequality
$V_t(x,d) \leq \frac{Y_{t}}{\gamma_t}\left(d-\gamma_t x\right)^2-\frac{d^2}{2\gamma_t}$ a.s.
The reverse inequality is provided 
in Theorem~\ref{thm:quadratic_exp_J}.

\medskip
(ii) Let $x\neq \frac{d}{\gamma_0}$. In this step, we prove that if there exists an optimal strategy, then there is a c\`adl\`ag semimartingale $\beta=(\beta_s)_{s\in [0,T]}$ such that $\wt{\beta} = \beta$ $\PdM$-a.e.  
For the other implication and for the uniqueness statement
consider $t=0$ in step~(iii) below.

Assume that there exists an optimal strategy $X^*=(X^*_s)_{s\in [0,T]}\in \mathcal A_0(x,d)$. 
It then follows from $V_0(x,d)= \frac{Y_{0}}{\gamma_0}\left(d-\gamma_0 x\right)^2-\frac{d^2}{2\gamma_0}$ and Theorem \ref{thm:quadratic_exp_J} that 
\begin{equation}\label{eq:squared_bracket_in_expectation_in_J_equals_zero}
\wt{\beta} \left( \gamma X^* - D^* \right) + D^* = 0 \quad \PdM\text{-a.e.},
\end{equation} 
where $D^*=(D^*_s)_{s\in [0,T]}$ denotes the 
deviation process associated to $X^*$ 
(see~\eqref{eq:deviation_dyn}).
This yields for the process $A^*=(A^*_s)_{s\in[0,T]}$ defined by 
$A^*_s = X^*_s - \alpha_s D^*_s$, $s \in [0,T],$ that 
by Lemma~\ref{lem:unifieddAcalc} 
\begin{equation*}
	dA^*_s = \wt{\beta}_s A^*_s\left(\frac{d\alpha_s}{\alpha_s} - \rho_s d[M]_s\right), \quad s \in [0,T], 
\end{equation*} 
and $A^*_0 = x - \frac{d}{\gamma_0}$. It follows that 
\begin{equation*}
A^*_s 
=\left(x-\frac{d}{\gamma_0}\right) 
\mathcal E\left( \int_0^\cdot \frac{\wt{\beta}_r}{\alpha_r}d\alpha_r-\int_0^\cdot \rho_r \wt{\beta}_r d[M]_r \right)_s 
= \left(x-\frac{d}{\gamma_0}\right) \mathcal E( \wt{Q} )_s, 
\quad s\in [0,T],
\end{equation*}
where $\wt{Q}_s =  \int_0^s \frac{\wt{\beta}_r}{\alpha_r}d\alpha_r-\int_0^s \rho_r \wt{\beta}_r d[M]_r$, $s\in [0,T]$. Since $\wt{Q} = (\wt{Q}_s)_{s\in[0,T]}$ is a continuous semimartingale, its stochastic exponential $\mathcal{E}(\wt{Q})$ is strictly positive. 
Together with the assumption $x\neq \frac{d}{\gamma_0}$ 
it follows that $A^*$ is nonvanishing. 
Consequently, $\beta = -\frac{D^*}{\gamma A^*}$ is a c\`adl\`ag semimartingale,
whereas \eqref{eq:squared_bracket_in_expectation_in_J_equals_zero} proves that $\wt{\beta} = \beta$ $\PdM$-a.e.

\medskip
(iii) Suppose that there exists a c\`adl\`ag semimartingale $\beta=(\beta_s)_{s\in [0,T]}$ such that $\wt{\beta} = \beta$ $\PdM$-a.e., and let $t\in[0,T]$. 
It then follows from Lemma~\ref{lem:Xifbetancadlagsemimart} that \eqref{eq:opt_strat_representation} defines 
a strategy $X^* \in \cA_t(x,d)$
such that, for the associated deviation process $D^*$,
we have representation~\eqref{eq:opt_dev_representation} and, moreover,
$D^*=-\beta (\gamma X^*-D^*) = -\wt{\beta} (\gamma X^*-D^*)$ 
$\PdM|_{[t,T]}$-a.e.
Then Theorem~\ref{thm:quadratic_exp_J} implies that $J_t(x,d,X^*)=\frac{Y_{t-}}{\gamma_t}(d-\gamma_t x)^2-\frac{d^2}{2\gamma_t}$, and since $V_t(x,d)=\frac{Y_{t-}}{\gamma_t}(d-\gamma_t x)^2-\frac{d^2}{2\gamma_t}$, the strategy $X^*$ is optimal. 
The uniqueness up to $\PdM|_{[t,T]}$-null sets follows from Lemma~\ref{lem:uniqueness_of_opt_strat_up_to_Dm_null_sets}.
\end{proof}

\begin{remark}\label{rem:26032021a2}
In order to justify the claim in part~(b) of Remark~\ref{rem:interpretation_Y} it now remains to verify that, for any $x,d\in\bbR$ and $t\in[0,T]$, the strategy $X^*$ in~\eqref{eq:opt_strat_representation} belongs to $\cA_t(x,d)$, i.e., satisfies (A1)--(A3), under conditions \ref{it:26032021a4}--\ref{it:26032021a2} in part~(b) of Remark~\ref{rem:interpretation_Y} only.
(A1) and~(A2) are verified by essentially the same argument (use Remark~\ref{rem:26032021a1} and condition~\ref{it:26032021a2} in part~(b) of Remark~\ref{rem:interpretation_Y}).
For~(A3), we notice that $D^*=-\beta(\gamma X^*-D^*)$ $\PdM|_{[t,T]}$-a.e.\ implies $(D^*)^4\alpha^2=\gamma^2(X^*-\alpha D^*)^4\beta^4$ $\PdM|_{[t,T]}$-a.e., hence (A3) follows, via the Cauchy-Schwarz inequality, from (A1) and condition~\ref{it:26032021a1} in part~(b) of Remark~\ref{rem:interpretation_Y}.
\end{remark}

\begin{proof}[Proof of Corollary~\ref{cor:07052020a1}]
The assumptions allow to apply Theorem~\ref{thm:sol_val_fct}.
First notice that, for any strategy $X$, the associated process
$X-\frac D\gamma$
is continuous (although both $X$ and $D$ can have jumps),
which follows from~\eqref{eq:deviation_dyn}.
Together with
\eqref{eq:opt_strat_representation}
and~\eqref{eq:opt_dev_representation}
this yields that, for any $u\in(t,T)$, we have
\begin{equation}\label{eq:07052020a3}
X_{u-}-\frac{D_{u-}}{\gamma_u}
=X_u-\frac{D_u}{\gamma_u}
=\left(x-\frac d{\gamma_t}\right)\cE(Q)_{t,u}.
\end{equation}
All statements of the corollary now follow from~\eqref{eq:07052020a3}
and the trivial fact that, for all $s\in[u,T]$,
we have $\cE(Q)_{t,u}\,\cE(Q)_{u,s}=\cE(Q)_{t,s}$.
\end{proof}

\begin{proof}[Proof of Proposition~\ref{propo:withoutresiliencecloseimmediately}]
	In the case $\rho \equiv 0$, the driver~\eqref{eq:driver_of_bsde} of BSDE~\eqref{eq:bsde} equals $0$ for $(Y,Z)=\left( \frac12 , 0\right)$. 
	Hence, $\left( Y,Z,M^\perp \right) = \left( \frac12, 0, 0 \right)$ solves BSDE~\eqref{eq:bsde} and $\bm{\left(C_{\textbf{BSDE}}\right)}$ is clearly satisfied. 
	We then obtain that $\wt{\beta}_s = 1$ for all $s\in[0,T]$. 
	By Theorem~\ref{thm:quadratic_exp_J} it holds for all $x,d \in \R$, $t \in [0,T]$ and $X \in \mathcal{A}_t(x,d)$ that 
	\begin{equation}\label{eq:18022020a2}
	J_t(x,d,X) = \frac{1}{2\gamma_t} (d-\gamma_t x)^2 - \frac{d^2}{2\gamma_t} + E_t\left[ \int_t^T \frac12 \gamma_s \mu_s X_s^2 d[M]_s \right] .
	\end{equation}
	Notice that, due to $\bm{\left(C_{>0}\right)}$ and $\rho \equiv 0$, the process $\mu$ is positive. 
	The optimality of closing the position immediately and the formula for the value function now follow from \eqref{eq:18022020a2}. 
	The uniqueness up to $\PdM|_{[t,T]}$-null sets follows from Lemma~\ref{lem:uniqueness_of_opt_strat_up_to_Dm_null_sets}.
\end{proof}

\begin{proof}[Proof of Proposition~\ref{propo:bsdeuniquenessfrommaintheo}]
	By Theorem~\ref{thm:sol_val_fct} we have 
	$\gamma_t Y^{(1)}_t = V_t(1,0) = \gamma_t Y^{(2)}_t$, $t\in[0,T]$. 
	Therefore, $Y^{(1)}$ and $Y^{(2)}$ are indistinguishable.
	Compare the following canonical decompositions (see Section~I.4c in \cite{jacodshiryaev}) of the special semimartingale $Y=Y^{(1)}=Y^{(2)}$, where $f$ denotes driver~\eqref{eq:driver_of_bsde} of BSDE~\eqref{eq:bsde}:
	\begin{equation*}
	\begin{split}
	Y_t & = Y_0 - \int_0^t f\left(s,Y_s,Z^{(1)}_s\right) d[M]_s + \int_0^t Z^{(1)}_s dM_s + M^{\perp,(1)}_t \\
	& = Y_0 - \int_0^t f\left(s,Y_s,Z^{(2)}_s\right) d[M]_s + \int_0^t Z^{(2)}_s dM_s + M^{\perp,(2)}_t, \quad t \in [0,T].
	\end{split}
	\end{equation*}
	For the local martingale parts we have 
	\begin{equation}\label{eq:18022020a3}
	\int_0^{\cdot} Z^{(1)}_s dM_s + M^{\perp, (1)}_{\cdot} 
	= \int_0^{\cdot} Z^{(2)}_s dM_s + M^{\perp, (2)}_{\cdot}.  
	\end{equation} 
	This implies that 
	\begin{equation*}
	\begin{split}
	\left[ M^{\perp, (1)} - M^{\perp, (2)} \right]_t 
	& = \left[ M^{\perp, (1)} - M^{\perp, (2)}, \int_0^{\cdot} \left( Z^{(2)}_s - Z^{(1)}_s \right) dM_s \right]_t \\
	& =\int_0^t \left( Z^{(2)}_s - Z^{(1)}_s \right) d\left[  M^{\perp, (1)} - M^{\perp, (2)}, M \right]_s 
	= 0, \quad t\in [0,T].
	\end{split}
	\end{equation*}
	Thus $M^{\perp, (1)} - M^{\perp, (2)}$ is a local martingale starting in $0$ with $\left[ M^{\perp, (1)} - M^{\perp, (2)} \right] = 0$. It follows from the Burkholder-Davis-Gundy inequality that $M^{\perp, (1)}$ and $M^{\perp, (2)}$ are indistinguishable. 
	Then, \eqref{eq:18022020a3} implies further that 
	$\int_0^{\cdot} \left( Z^{(2)}_s - Z^{(1)}_s \right) dM_s = 0 $ 
	and hence 
	\begin{equation*}
	\int_0^{\cdot} \left( Z^{(2)}_s - Z^{(1)}_s \right)^2 d[M]_s 
	= \left[ \int_0^{\cdot} \left( Z^{(2)}_s - Z^{(1)}_s \right) dM_s \right] 
	= 0 .
	\end{equation*} 
	It follows that $Z^{(1)} = Z^{(2)}$ $\PdM$-a.e.
\end{proof}

\appendix

\section{Once again to the cost functional and the dynamics of the deviation process}\label{sec:formalderivationdeviationcostfct}

Here we motivate dynamics~\eqref{eq:deviation_dyn}
of the deviation process
and cost functional~\eqref{eq:cost_fct}
via a limiting procedure from the discrete-time setting.

Without loss of generality we consider the starting time $t=0$.
We fix an initial position $x\in\bbR$ and an initial deviation $d\in\bbR$
and consider a continuous-time execution strategy
$X \in \mathcal{A}_0(x,d)$.
For any (large) $N\in\bbN$, we set $h=\frac TN$
and consider discrete-time trading at points
of the grid $\{kh\colon k=0,\ldots,N\}$.
More precisely, the continuous-time strategy $X$
is approximated by the discrete-time strategy
that consists of trades $\xi_{kh}$, $k\in\{0,\ldots,N\}$,
at the grid points, where
$$
\xi_0=X_0-x,\quad
\xi_{kh}=X_{kh}-X_{(k-1)h},\quad
k\in\{1,\ldots,N\}.
$$
Notice that $\xi_{kh}$ is $\cF_{kh}$-measurable, $k=0,\ldots,N$.
Further, for $k\in\{1,\ldots,N\}$, we define
$\beta_{kh} = \exp\left( -\int_{(k-1)h}^{kh} \rho_s d[M]_s \right)$
and introduce the notations
$\eta_r = \exp\left( - \int_0^r \rho_s d[M]_s \right)$ and
$\nu_r=\gamma_{r}\exp\left( \int_0^r \rho_s d[M]_s \right)$,
$r \in [0,T]$.

In the discrete-time setting of \cite{ackermann2020optimal},
the deviation process (now denoted by $\wt{D}^{(h)}$) is defined by 
\begin{equation*}
\wt{D}^{(h)}_{0-}=d, \quad 
\wt{D}^{(h)}_{(kh)-} = \left( \wt{D}^{(h)}_{((k-1)h)-} + \gamma_{(k-1)h} \xi_{(k-1)h} \right)\beta_{kh}, \quad  k\in\{1,\ldots,N\}.
\end{equation*}
The minus in the subscript of $\wt{D}^{(h)}_{(kh)-}$ is purely notational
(this is a discrete-time process),
the meaning of $\wt{D}^{(h)}_{(kh)-}$ is that
this is the deviation at time $kh$
directly \emph{prior} to the trade $\xi_{kh}$ at time $kh$,
and we preserve the minus sign in order to make the notation consistent with \cite{ackermann2020optimal}.
A straightforward calculation shows that
$$
\wt{D}^{(h)}_{(kh)-} =  
d \prod_{l=1}^{k} \beta_{lh} + \sum_{i=1}^{k} \gamma_{(i-1)h} \xi_{(i-1)h} \prod_{l=i}^{k} \beta_{lh}, \quad k \in \{1,\ldots,N\} .
$$
Substituting the definition of $\beta_{kh}$, we obtain that, for all $k\in\{1,\ldots,N\}$,
\begin{equation}\label{eq:dt_deviation_as_prod_exp_L}
\begin{split}
\wt{D}_{(kh)-}^{(h)} 
& = \exp\left( - \int_{0}^{kh} \rho_s d[M]_s \right) d + \sum_{i=1}^k \gamma_{(i-1)h} \xi_{(i-1)h} \exp\left( - \int_{(i-1)h}^{kh} \rho_s d[M]_s \right) \\[1mm]
& = \eta_{kh} \left( d + \sum_{i=1}^k \nu_{(i-1)h} \xi_{(i-1)h} \right)
=\eta_{kh}L^{(h)}_{(k-1)h},
\end{split}
\end{equation}
where, for $k\in\{0,\ldots,N\}$, we set
\begin{equation*}
\begin{split}
L^{(h)}_{kh}
&=d + \sum_{j=0}^{k} \nu_{jh} \xi_{jh}
= d + \gamma_0 ( X_0 - x ) + \sum_{j=1}^{k} \nu_{jh} \left( X_{jh} - X_{(j-1)h} \right)\\
&=d + \gamma_0 (X_0-x) + \sum_{j=1}^{k} \nu_{(j-1)h} \left( X_{jh} - X_{(j-1)h} \right) 
+ \sum_{j=1}^{k} \left( \nu_{jh} - \nu_{(j-1)h} \right) \left( X_{jh} - X_{(j-1)h} \right).
\end{split}
\end{equation*}
The last expression shows that the continuous-time limit of the processes
$(L^{(h)}_{kh})_{k\in\{0,\ldots,N\}}$,
as $N\to\infty$ (and $h=\frac TN\to0$),
is the process $(L_s)_{s\in[0,T]}$ given by
\begin{equation*}
L_s = d+ \int_{[0,s]} \nu_r\,dX_r + \int_{[0,s]} d[\nu,X]_r, \quad s\in[0,T]
\end{equation*} 
(apply Proposition~I.4.44 and Theorem~I.4.47 in \cite{jacodshiryaev}).
Combining this with~\eqref{eq:dt_deviation_as_prod_exp_L} and the definition of $\nu_r$, $r \in[0,T]$, recovers
that the continuous-time limit of the processes
$(\wt{D}^{(h)}_{(kh)-})_{k\in\{0,\ldots,N\}}$
is the process $(D_s)_{s\in[0,T]}$ given by
$$
D_s=\eta_s L_s,\quad s\in[0,T]
$$
(and $D_{0-}=d$),
which is nothing else but~\eqref{eq:deviation_def}
or, equivalently,~\eqref{eq:deviation_dyn}.

\smallskip
We now turn to the cost functional.
In the discrete-time setting the cost is 
$\sum_{j=0}^N \left( \wt{D}^{(h)}_{(jh)-} + \frac{\gamma_{jh}}{2} \xi_{jh} \right) \xi_{jh}$.
Set $X_{-h}=X_{0-}\;(=x)$. Then it holds
\begin{equation}\label{eq:dt_cost_fct_split_sum}
\begin{split}
\sum_{j=0}^N \left( \wt{D}^{(h)}_{(jh)-} + \frac{\gamma_{jh}}{2} \xi_{jh} \right) \xi_{jh} 
& = \sum_{j=0}^N \wt{D}^{(h)}_{(jh)-} \left( X_{jh} - X_{(j-1)h} \right) 
 + \sum_{j=0}^N \frac{\gamma_{(j-1)h}}{2} \left( X_{jh} - X_{(j-1)h} \right)^2 \\
& \quad + \sum_{j=0}^N \frac12 \left( \gamma_{jh} - \gamma_{(j-1)h} \right) \left( X_{jh} - X_{(j-1)h} \right)^2 .
\end{split}
\end{equation} 
For the first term on the right-hand side of~\eqref{eq:dt_cost_fct_split_sum}, we have
\begin{equation*} 
\begin{split}
 & \sum_{j=0}^N \wt{D}^{(h)}_{(jh)-} \left( X_{jh} - X_{(j-1)h} \right) 
= \sum_{j=0}^N \eta_{jh} 
L^{(h)}_{(j-1)h} \left( X_{jh} - X_{(j-1)h} \right) \\
& \quad =  \sum_{j=0}^N \eta_{(j-1)h} L^{(h)}_{(j-1)h} \left( X_{jh} - X_{(j-1)h} \right) 
+ \sum_{j=0}^N L^{(h)}_{(j-1)h} \left( \eta_{jh} - \eta_{(j-1)h} \right) 
 \left( X_{jh} - X_{(j-1)h} \right),
\end{split}
\end{equation*}
which has the continuous-time limit
\begin{equation*}
\int_{[0,T]}\eta_sL_{s-}\,dX_s+\int_{[0,T]}L_{s-}\,d[\eta,X]_s
=\int_{[0,T]}D_{s-}\,dX_s,
\end{equation*}
as $\eta$ is a continuous process of finite variation.
Further, the second term on the right-hand side of~\eqref{eq:dt_cost_fct_split_sum} tends to
$\int_{[0,T]}\frac{\gamma_s}2\,d[X]_s$
and the third term to $\frac12[\gamma,[X]]_T=0$
because $\gamma$ is continuous.
As the continuous-time limit of the discrete-time cost we thus obtain
$$
\int_{[0,T]}D_{s-}\,dX_s+\int_{[0,T]}\frac{\gamma_s}2\,d[X]_s,
$$
which motivates our form of the cost functional in continuous time.

\section{Heuristic derivation of the BSDE}\label{sec:formalderivationbsde}

We have seen that BSDE~\eqref{eq:bsde} plays a central role both in our results and in the proofs. But where does it come from?
In this appendix we motivate BSDE~\eqref{eq:bsde} via a heuristic limiting procedure from discrete time.

To this end we consider a discrete-time version of the stochastic control problem~\eqref{eq:def_value_fct}. 
For $h>0$ such that $h=\frac{T}{N}$ for some $N\in \N$, $t\in[0,T]$ and $x, d \in \R$ let $\cA_t^h(x,d)$ be the subset of all $X = (X_s)_{s\in[t,T]} \in \cA_t(x,d)$ with 
$X_s= \sum_{k=0}^N X_{(kh) \vee t} 1_{[kh,(k+1)h)}(s)$ for all $s \in [t,T]$. 
Moreover, let $V_t^h(x,d) = \essinf_{X \in \cA_t^h(x,d)}J_t(x,d,X)$ for all $x,d \in \R, t\in [0,T]$, $h>0$ with $h=\frac{T}{N}$ for some $N\in\N$. Then it follows from \cite{ackermann2020optimal} that for each $h>0$ with $h=\frac{T}{N}$ for some $N\in\N$ there exists a process $Y^h=(Y_t^h)_{t\in\{0,h,\ldots,T\}}$ such that $V_t^h(x,d) = \frac{Y_t^h}{\gamma_t} \left( d-\gamma_t x\right)^2 - \frac{d^2}{2\gamma_t}$, $x,d \in \mathbb{R}$, $t\in\{0,h,\ldots,T\}$. The discrete-time process $Y^h$ is given by the backward recursion $Y_T^h=\frac12$ and, for $t \in \{0,h,\ldots,T-h\}$, 
\begin{equation}\label{eq:def_discrete_time_Y}
Y_t^h = E_t\left[ \frac{\gamma_{t+h}}{\gamma_t} Y_{t+h}^h \right] - \frac{\left(  E_t\left[ Y_{t+h}^h \left( e^{-\int_t^{t+h} \rho_s d[M]_s} - \frac{\gamma_{t+h}}{\gamma_t} \right) \right]  \right)^2}{E_t\left[ Y_{t+h}^h \frac{\gamma_t}{\gamma_{t+h}} \left( e^{-\int_t^{t+h} \rho_s d[M]_s} - \frac{\gamma_{t+h}}{\gamma_t} \right) ^2 + \frac12 \left( 1- \frac{\gamma_t}{\gamma_{t+h}} e^{-2\int_t^{t+h} \rho_s d[M]_s} \right) \right]} .
\end{equation}
We aim at deriving --- at least heuristically --- the dynamics of the continuous-time limit $Y=(Y_t)_{t\in[0,T]}$ of $Y^h$.
To this end, we suppose that $Y$ can be decomposed as follows 
\begin{equation}\label{eq:proposed_decomp_of_Y}
dY_t = a_t d[M]_t + Z_tdM_t + dM^\perp_t, \quad t \in [0,T],
\end{equation} 
where $(a_t)_{t\in[0,T]}$, $(Z_t)_{t\in[0,T]}$ are progressively measurable processes
($(a_t)_{t\in[0,T]}$ is to be determined)
and $M^\perp = (M^\perp_t)_{t\in[0,T]}$ is a local martingale orthogonal to $M$. 
From~\eqref{eq:proposed_decomp_of_Y} we deduce that $(a_t)_{t\in[0,T]}$ should be identified as the limit 
\begin{equation*}
	\begin{split}
		a_t & = \lim_{h \to 0} \frac{E_t[Y_{t+h}]-Y_t}{E_t\left[ [M]_{t+h} \right] - [M]_t}, \quad t\in [0,T].
	\end{split}
\end{equation*}
Assume that replacing $Y^h$ with $Y$ in~\eqref{eq:def_discrete_time_Y} introduces an error only of the magnitude $o\left(E_t\left[ [M]_{t+h} \right] - [M]_t\right)$. Then we can get the expression for $a_t$ by evaluating the limit 
\begin{equation}\label{eq:identifyabylimit}
\begin{split}
a_t & = \lim_{h \to 0} \frac{1}{E_t\left[ [M]_{t+h} \right] - [M]_t} 
\left(\rule{0cm}{1.4cm}\right.
E_t\left[ Y_{t+h} \right]
- E_t\left[ \frac{\gamma_{t+h}}{\gamma_t} Y_{t+h} \right] \\
& \quad + \frac{\left(  E_t\left[ Y_{t+h} \left( e^{-\int_t^{t+h} \rho_s d[M]_s} - \frac{\gamma_{t+h}}{\gamma_t} \right) \right]  \right)^2}{E_t\left[ Y_{t+h} \frac{\gamma_t}{\gamma_{t+h}} \left( e^{-\int_t^{t+h} \rho_s d[M]_s} - \frac{\gamma_{t+h}}{\gamma_t} \right) ^2 + \frac12 \left( 1- \frac{\gamma_t}{\gamma_{t+h}} e^{-2\int_t^{t+h} \rho_s d[M]_s} \right) \right]} 
\left.\rule{0cm}{1.4cm}\right), \, t\in [0,T].
\end{split}
\end{equation} 
For the remainder of this section we fix $t\in[0,T]$ and assume that all stochastic integrals with respect to $dM$ and $dM^\perp$ that appear are true martingales. 
We define the process $\Gamma=(\Gamma_s)_{s\in[t,T]}$ by $\Gamma_s=\frac{\gamma_s}{\gamma_t}$ for $s\in[t,T]$. 
Since 
\begin{equation*}
d(\Gamma_sY_s) 
= \left( Y_{s}\Gamma_s \mu_s + \Gamma_s a_s + \Gamma_s\sigma_sZ_s \right) d[M]_s 
+ \left( Y_{s}\Gamma_s \sigma_s + \Gamma_sZ_s \right) dM_s
+ \Gamma_s dM^\perp_s, \, s\in[t,T],
\end{equation*}
it holds that for all $h\in (0,T-t)$, 
\begin{equation}\label{eq:GammaYErw}
E_t\left[ \Gamma_{t+h} Y_{t+h} \right] = Y_t + E_t\left[ \int_t^{t+h} \left( Y_{s}\Gamma_s \mu_s + \Gamma_s a_s + \Gamma_s\sigma_sZ_s  \right) d[M]_s \right] .
\end{equation}
Together with 
\begin{equation*}
E_t\left[ Y_{t+h}\right] = Y_t + E_t\left[ \int_t^{t+h} a_sd[M]_s \right], \, h\in (0,T-t), 
\end{equation*}
we obtain heuristically that  
\begin{equation}\label{eq:firstlimitfora}
\begin{split}
\frac{E_t\left[Y_{t+h}\right]-E_t\left[ \Gamma_{t+h} Y_{t+h} \right]}{E_t\left[ [M]_{t+h} \right] - [M]_t} 
& = \frac{E_t\left[ \int_t^{t+h} \left( a_s (1-\Gamma_s) - Y_{s}\Gamma_s \mu_s - \Gamma_s\sigma_sZ_s \right)d[M]_s \right]}{E_t\left[ \int_t^{t+h} d[M]_{s} \right] } \\ 
& \xrightarrow[h \to 0]{}  -Y_t\mu_t-\sigma_tZ_t .
\end{split}
\end{equation}
Furthermore, it holds for all $h\in (0,T-t)$ that 
\begin{equation}\label{eq:Yexprho}
\begin{split}
Y_{t+h} e^{- \int_t^{t+h} \rho_s d[M]_s} 
& = Y_t + \int_t^{t+h}\left( a_s - \rho_s Y_{s} \right) e^{- \int_t^{s} \rho_r d[M]_r} d[M]_s 
+ \int_t^{t+h}  Z_s e^{- \int_t^{s} \rho_r d[M]_r} dM_s \\
& \quad	+ \int_{(t,t+h]} e^{- \int_t^{s} \rho_r d[M]_r}  dM^\perp_s .
\end{split}
\end{equation}
From \eqref{eq:GammaYErw} and \eqref{eq:Yexprho} we derive heuristically that 
\begin{equation}\label{eq:secondlimitfora}
\begin{split}
& \frac{E_t\left[ Y_{t+h} \left( e^{-\int_t^{t+h} \rho_s d[M]_s} - \Gamma_{t+h} \right) \right]}{E_t\left[ [M]_{t+h} \right] - [M]_t} \\
& = \frac{  E_t\left[ \int_t^{t+h} \left( (a_s - \rho_s Y_{s} ) e^{- \int_t^{s} \rho_r d[M]_r} -\left( Y_{s}\Gamma_s \mu_s + \Gamma_s a_s + \Gamma_s\sigma_sZ_s  \right) \right) d[M]_s \right]  }{ E_t\left[ \int_t^{t+h} d[M]_s \right] }  \\
& \xrightarrow[h \to 0]{}  - \rho_t Y_t - Y_t \mu_t -\sigma_t Z_t .
\end{split}
\end{equation}
Recall that $\Gamma_s^{-1} = \frac{\alpha_s}{\alpha_t}$, $s\in[t,T]$, with 
\begin{equation*}
d\Gamma_s^{-1} = \Gamma_s^{-1} \left( - \left(\mu_s-\sigma_s^2\right) d[M]_s - \sigma_s dM_s  \right), \quad s \in [t,T].
\end{equation*} 
Therefore, it holds that 
\begin{equation}\label{eq:YGammaInv}
\begin{split}
d\left(Y_s \Gamma_{s}^{-1}\right) 
& =  \left( - Y_{s} \Gamma_s^{-1} \left(\mu_s-\sigma_s^2\right) + \Gamma_s^{-1} a_s - Z_s \Gamma_s^{-1} \sigma_s  \right) d[M]_s \\
& \quad + \left( \Gamma_s^{-1}Z_s - Y_{s}\Gamma_s^{-1} \sigma_s \right) dM_s 
+ \Gamma_s^{-1} dM^\perp_s , \quad s\in [t,T].
\end{split}
\end{equation}
Moreover, we have that for all $h\in (0,T-t)$, 
\begin{equation}\label{eq:expMinGammaSquared}
\begin{split}
\Big( e^{-\int_t^{t+h} \rho_s d[M]_s} & - \Gamma_{t+h} \Big)^2 
= \int_t^{t+h} \Big(  \Gamma_s^2\sigma_s^2 - 2 \left( e^{-\int_t^{s} \rho_r d[M]_r} - \Gamma_{s} \right) \\
& \cdot \left( \rho_se^{-\int_t^{s} \rho_r d[M]_r} + \Gamma_s \mu_s \right) \Big) d[M]_s 
- 2 \int_t^{t+h}   \left( e^{-\int_t^{s} \rho_r d[M]_r} - \Gamma_{s} \right) \Gamma_s \sigma_s dM_s .
\end{split}
\end{equation}
It follows from \eqref{eq:YGammaInv} and \eqref{eq:expMinGammaSquared} that 
\begin{equation*}
\begin{split}
& Y_{t+h}\Gamma_{t+h}^{-1} \left( e^{-\int_t^{t+h} \rho_s d[M]_s} - \Gamma_{t+h} \right)^2 \\
& = \int_t^{t+h} \Bigg(  Y_{s} \Gamma_s^{-1} \left( \Gamma_s^2 \sigma_s^2 - 2 \left( e^{-\int_t^{s} \rho_r d[M]_r} - \Gamma_{s} \right) \left( \rho_s e^{-\int_t^s \rho_r d[M]_r} + \Gamma_s \mu_s \right) \right)  \\
& \quad \quad \quad + \left( e^{-\int_t^{s} \rho_r d[M]_r} - \Gamma_{s} \right)^2 \Gamma_s^{-1} \left( -Y_{s} \left(\mu_s-\sigma_s^2\right) + a_s - Z_s \sigma_s \right)  \\
& \quad \quad \quad - 2 \sigma_s \left( Z_s - Y_{s} \sigma_s \right) \left( e^{-\int_t^{s} \rho_r d[M]_r} - \Gamma_{s} \right)  \Bigg) d[M]_s \\
& \quad + \int_t^{t+h} \left(  \Gamma_s^{-1} \left( e^{-\int_t^{s} \rho_r d[M]_r} - \Gamma_{s} \right)^2 \left( Z_s - Y_{s} \sigma_s \right) - 2 Y_{s} \left( e^{-\int_t^{s} \rho_r d[M]_r} - \Gamma_{s} \right) \sigma_s  \right) dM_s \\
& \quad + \int_{(t,t+h]} \left( e^{-\int_t^{s} \rho_r d[M]_r} - \Gamma_{s} \right)^2 \Gamma_s^{-1} dM_s^\perp , \quad h\in (0,T-t), 
\end{split}
\end{equation*}
and hence 
\begin{equation*}
\begin{split}
E_t\Big[ Y_{t+h} & \Gamma_{t+h}^{-1} \left( e^{-\int_t^{t+h} \rho_s d[M]_s} - \Gamma_{t+h} \right)^2 \Big] 
=
E_t\Bigg[ \int_t^{t+h}  \Bigg(  Y_{s} \Gamma_s^{-1} \Big( \Gamma_s^2 \sigma_s^2 - 2 \left( e^{-\int_t^{s} \rho_r d[M]_r} - \Gamma_{s} \right) \\
& \cdot \left( \rho_s e^{-\int_t^s \rho_r d[M]_r} + \Gamma_s \mu_s \right) \Big)  
+ \left( e^{-\int_t^{s} \rho_r d[M]_r} - \Gamma_{s} \right)^2 \Gamma_s^{-1} \left( -Y_{s} \left(\mu_s-\sigma_s^2\right) + a_s - Z_s \sigma_s \right)  \\
& - 2 \sigma_s \left( Z_s - Y_{s} \sigma_s \right) \left( e^{-\int_t^{s} \rho_r d[M]_r} - \Gamma_{s} \right)  \Bigg) 
d[M]_s \Bigg] , \quad h\in (0,T-t) .
\end{split}
\end{equation*}
Therefore, we obtain heuristically that 
\begin{equation}\label{eq:thirdlimitfora}
\frac{E_t\left[ Y_{t+h}\Gamma_{t+h}^{-1} \left( e^{-\int_t^{t+h} \rho_s d[M]_s} - \Gamma_{t+h} \right)^2 \right] }{E_t\left[ [M]_{t+h}\right] - [M]_t } 
\xrightarrow[h \to 0]{} Y_t \sigma_t^2 .
\end{equation}
From 
\begin{equation*}
\begin{split}
\Gamma_{t+h}^{-1} e^{-2 \int_t^{t+h} \rho_s d[M]_s} 
& = 1 - \int_t^{t+h} \Gamma_s^{-1} e^{-2 \int_t^{s} \rho_r d[M]_r} \left( 2 \rho_s + \mu_s-\sigma_s^2 \right) d[M]_s \\
& \quad - \int_t^{t+h} e^{-2 \int_t^{s} \rho_r d[M]_r} \Gamma_s^{-1} \sigma_s dM_s , \quad h\in (0,T-t),  
\end{split}
\end{equation*}
we derive heuristically that 
\begin{equation}\label{eq:fourthlimitfora}
\begin{split}
\frac{  E_t\left[ \frac{1}{2} \left( 1 - \Gamma_{t+h}^{-1} e^{-2 \int_t^{t+h} \rho_s d[M]_s} \right) \right]  }{E_t\left[ [M]_{t+h}\right] - [M]_t } 
& = \frac{ E_t\left[  \int_t^{t+h} \frac12 \left(  \Gamma_s^{-1} e^{-2 \int_t^{s} \rho_r d[M]_r} \left( 2 \rho_s + \mu_s-\sigma_s^2 \right)  \right) d[M]_s \right]  }{  E_t\left[ \int_t^{t+h} d[M]_s \right] } \\
& \xrightarrow[h \to 0]{} \frac12 \left(2 \rho_t + \mu_t - \sigma_t^2 \right).
\end{split}
\end{equation}

We conclude from \eqref{eq:identifyabylimit}, \eqref{eq:firstlimitfora}, \eqref{eq:secondlimitfora}, \eqref{eq:thirdlimitfora} and \eqref{eq:fourthlimitfora} that  
$$
a_t  = -Y_t\mu_t - \sigma_t Z_t + \frac{\left( - \rho_t Y_t - Y_t\mu_t - \sigma_t Z_t\right)^2}{ Y_t\sigma_t^2 +  \frac12 \left(2 \rho_t + \mu_t - \sigma_t^2 \right)}
=-f(t,Y_t,Z_t)
$$
with $f$ given in~\eqref{eq:driver_of_bsde}.
Finally, the fact that the discrete-time processes $Y^h$, $h \in (0,T-t)$, are $(0,1/2]$-valued, which is proved in \cite{ackermann2020optimal},
explains the requirement in $\bm{\left(C_{\textbf{BSDE}}\right)}$ that $Y$ is $[0,1/2]$-valued.

\section{Comparison argument for Section~\protect\ref{propo:existence_bsde_general_filtration}}\label{sec:comp_bsde}

Here we justify via a certain comparison argument that, in Proposition~\ref{propo:existence_bsde_general_filtration}, we get that $Y$ is $[0,1/2]$-valued.

In the following proposition we are interested in a BSDE with driver $f$ and terminal value $\xi$ of the form 
\begin{equation}\label{eq:bsde_comp}
dY_s = - f(s,Y_s) d[M]_s + Z_s dM_s + dM^\perp_s, \quad s\in[0,T], \quad  Y_T=\xi,
\end{equation}
and denote such a BSDE by BSDE$(f,\xi)$. Recall that, in Proposition~\ref{propo:existence_bsde_general_filtration}, the driver does not depend on $Z$. Therefore, we do not consider dependence on $Z$ in~\eqref{eq:bsde_comp}.

\begin{propo}\label{propo:comparison}
	Assume $\bm{\left(C_{[M]}\right)}$. Let $f$ and $\wt f$ be progressively measurable and $f$ Lipschitz continuous, i.e., there exists some $L \in (0,\infty)$ such that for all $y,y'\in\mathbb{R}$ it holds $| f(s,y) - f(s,y') | \leq L |y-y'|$ $\PdM$-a.e. Moreover, suppose that $\xi$ and $\wt\xi$ are $\cF_T$-measurable  random variables. 
	Let $(Y,Z,M^\perp)$ be a solution of BSDE$(f,\xi)$ and $(\wt Y,\wt Z,\wt M^\perp)$ a solution of BSDE$(\wt f,\wt\xi)$ such that $E\left[ \int_0^T Z_s^2 d[M]_s \right] < \infty$, $E\left[ [M^\perp]_T \right] < \infty$ and $E\left[ \int_0^T \wt Z_s^2 d[M]_s \right] < \infty$, $E\left[ [\wt M^\perp]_T \right] < \infty$. 
	
	Denote $\delta Y_t = Y_t-\wt Y_t$ and $\delta f_t = f(t,\wt Y_t) - \wt f(t,\wt Y_t)$ for all $t\in[0,T]$. 
	Furthermore, define 	
	$
	b_t = 1_{\{Y_t\neq \wt Y_t\}} ( f(t,Y_t) - f(t,\wt Y_t) ) ( Y_t - \wt Y_t )^{-1},$ 
	$t\in [0,T],$ 
	and introduce the process $\Gamma=(\Gamma_t)_{t\in[0,T]}$ given by 
	$\Gamma_t = \exp\left( \int_0^t b_s d[M]_s \right)$, $t\in[0,T]$. 
	
	Then, $\delta Y$ admits the representation 
	\begin{equation}\label{eq:repres_delta_Y_comp}
	\delta Y_t = \Gamma_t^{-1} E_t\left[ \Gamma_T \delta Y_T + \int_t^T \Gamma_s \delta f_s d[M]_s \right], \quad t \in [0,T].
	\end{equation}
	
	In particular:
	\begin{enumerate}[(i)]
		\item If $\xi\geq \wt \xi$ a.s. and $f(s,\wt Y_s)\geq \wt f(s,\wt Y_s)$ $\PdM$-a.e., then $Y_t\geq \wt Y_t$ a.s. for all $t \in [0,T]$.
		\item If $\xi\leq \wt \xi$ a.s. and $f(s,\wt Y_s)\leq \wt f(s,\wt Y_s)$ $\PdM$-a.e., then $Y_t\leq \wt Y_t$ a.s. for all $t\in[0,T]$.
	\end{enumerate}
\end{propo}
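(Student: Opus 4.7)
The plan is to reduce this to the classical representation for linear BSDEs by combining Itô's formula on $\Gamma\,\delta Y$ with a conditional expectation argument, where the key technical issue is justifying that several stochastic integrals are true martingales rather than just local ones.

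First I would subtract the two BSDEs to get
\begin{equation*}
d(\delta Y_s) = -\bigl(f(s,Y_s) - \wt f(s,\wt Y_s)\bigr) d[M]_s + (Z_s - \wt Z_s)\,dM_s + d(M^\perp_s - \wt M^\perp_s),
\end{equation*}
and then split the driver as
\begin{equation*}
f(s,Y_s) - \wt f(s,\wt Y_s) = \bigl(f(s,Y_s)-f(s,\wt Y_s)\bigr) + \bigl(f(s,\wt Y_s)-\wt f(s,\wt Y_s)\bigr) = b_s\,\delta Y_s + \delta f_s,
\end{equation*}
where the first identity uses the definition of $b$ (and on $\{Y=\wt Y\}$ the extra indicator makes the identity trivial). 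This yields the linear SDE
\begin{equation*}
d(\delta Y_s) = -(b_s\,\delta Y_s + \delta f_s)\,d[M]_s + (Z_s - \wt Z_s)\,dM_s + d(M^\perp_s - \wt M^\perp_s),\quad \delta Y_T = \xi-\wt\xi.
\end{equation*}

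Next, since $\Gamma$ is continuous and of finite variation with $d\Gamma_s = \Gamma_s b_s\,d[M]_s$, integration by parts gives
\begin{equation*}
d(\Gamma_s\,\delta Y_s) = -\Gamma_s\,\delta f_s\,d[M]_s + \Gamma_s(Z_s-\wt Z_s)\,dM_s + \Gamma_s\,d(M^\perp_s - \wt M^\perp_s),
\end{equation*}
the drift in $d[M]$ from the two contributions cancelling exactly. Integrating from $t$ to $T$ and taking $E_t[\cdot]$ will yield the claimed representation
\begin{equation*}
\Gamma_t\,\delta Y_t = E_t\!\left[\Gamma_T\,\delta Y_T + \int_t^T \Gamma_s\,\delta f_s\,d[M]_s\right],
\end{equation*}
provided the two stochastic-integral terms have zero conditional expectation.

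The main obstacle is the martingale property for $\int_0^\cdot \Gamma_s(Z_s-\wt Z_s)\,dM_s$ and $\int_0^\cdot \Gamma_s\,d(M^\perp_s-\wt M^\perp_s)$. Here Lipschitz continuity of $f$ gives $|b|\le L$ $\PdM$-a.e., hence $\Gamma_T \le e^{L[M]_T}$ and by $\bm{(C_{[M]})}$ we have $\Gamma_T \in L^p$ for every $p\in[1,\infty)$. Combined with the assumed $L^2$-integrability $E\bigl[\int_0^T Z_s^2\,d[M]_s\bigr]<\infty$, $E\bigl[\int_0^T \wt Z_s^2\,d[M]_s\bigr]<\infty$ and $E[[M^\perp]_T], E[[\wt M^\perp]_T]<\infty$, the Burkholder--Davis--Gundy inequality together with Cauchy--Schwarz shows
\begin{equation*}
E\!\left[\Bigl(\int_0^T \Gamma_s^2(Z_s-\wt Z_s)^2\,d[M]_s\Bigr)^{1/2}\right] \le \bigl(E[\Gamma_T^2]\bigr)^{1/2}\bigl(E\bigl[\textstyle\int_0^T (Z_s-\wt Z_s)^2\,d[M]_s\bigr]\bigr)^{1/2} < \infty,
\end{equation*}
and analogously for the $M^\perp$ term using $[M^\perp-\wt M^\perp]_T \le 2([M^\perp]_T+[\wt M^\perp]_T)$. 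These integrals are thus uniformly integrable martingales, so their conditional expectations vanish, establishing the representation. Dividing by $\Gamma_t>0$ gives \eqref{eq:repres_delta_Y_comp}. Finally, for (i) and (ii) I observe that under the sign assumptions the integrand and the terminal value inside $E_t[\cdot]$ have the appropriate sign $\PdM$-a.e.\ resp.\ a.s., and since $\Gamma$ is strictly positive the sign of $\delta Y_t$ follows immediately.
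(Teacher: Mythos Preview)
Your proof is correct and follows essentially the same route as the paper: linearize the difference BSDE via $f(s,Y_s)-\wt f(s,\wt Y_s)=b_s\,\delta Y_s+\delta f_s$, apply the integrating factor $\Gamma$, and verify the stochastic integrals are true martingales using the Lipschitz bound $|b|\le L$ together with $\bm{(C_{[M]})}$ and BDG/Cauchy--Schwarz. One small slip: in your Cauchy--Schwarz estimate you need $E[\sup_{s\in[0,T]}\Gamma_s^2]$ (or $E[e^{2L[M]_T}]$) rather than $E[\Gamma_T^2]$, since $\Gamma$ need not be monotone; the fix is immediate because $\Gamma_s\le e^{L[M]_T}$ for all $s$.
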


\begin{proof}
	It holds for all $t\in[0,T]$ that  
	\begin{equation*}
	\begin{split}
	\delta Y_t 
	& = Y_T - \wt Y_T + \int_t^T \left( f(s,Y_s) - \wt f(s,\wt Y_s) \right) d[M]_s 
	 - \int_t^T Z_s dM_s
	  - \left( M^\perp_T - M^\perp_t \right) \\
	  & \quad 
	 + \int_t^T \wt Z_s dM_s + \left( \wt M^\perp_T - \wt M^\perp_t \right) .
	\end{split}
	\end{equation*}
	Since 
	\begin{equation*}
	f(s,Y_s) - \wt f(s,\wt Y_s)  = f(s,Y_s) - f(s,\wt Y_s) + f(s,\wt Y_s) - \wt f(s,\wt Y_s) 
	= b_s \delta Y_s + \delta f_s, \quad s\in[0,T],
	\end{equation*}
	it follows that 
	\begin{equation*}
	d\delta Y_s = - \left( b_s \delta Y_s + \delta f_s \right) d[M]_s + Z_s dM_s - \wt Z_s dM_s + dM^\perp_s - d\wt M^\perp_s, \quad s\in[0,T] .
	\end{equation*}
	Together with $d\Gamma_s = \Gamma_s b_s d[M]_s$, $s\in[0,T]$, we obtain by integration by parts that 
	\begin{equation*}
	\begin{split}
	\Gamma_T \delta Y_T & = \Gamma_t\delta Y_t - \int_t^T \Gamma_s \left( b_s \delta Y_s + \delta f_s \right) d[M]_s
	+ \int_t^T \Gamma_s Z_s dM_s 
	- \int_t^T \Gamma_s \wt Z_s dM_s \\
	& \quad  + \int_{(t,T]} \Gamma_s dM^\perp_s 
	- \int_{(t,T]} \Gamma_s d\wt M^\perp_s 
	+ \int_t^T \delta Y_s \Gamma_s b_s d[M]_s, 
	\quad t \in [0,T].
	\end{split}
	\end{equation*}
	If the local martingales $S=\int_0^{\cdot} \Gamma_s Z_s dM_s$, $\wt S=\int_0^{\cdot} \Gamma_s \wt Z_s dM_s$, 
	$U=\int_{(0,\cdot]} \Gamma_s dM^\perp_s$ and $\wt U=\int_{(0,\cdot]} \Gamma_s d\wt M^\perp_s$
	are true martingales, then it follows that 
	\begin{equation*}
	\Gamma_t \delta Y_t = E_t\left[ \Gamma_T \delta Y_T + \int_t^T \Gamma_s \delta f_s d[M]_s \right], \quad t \in [0,T],
	\end{equation*}
	which yields the representation~\eqref{eq:repres_delta_Y_comp} of $\delta Y$. 
	
	To show that $S$ is a martingale, note first that due to the Lipschitz continuity of~$f$, the process $b$ is bounded $\PdM$-a.e.\ by the corresponding Lipschitz constant. 
	By the Cauchy-Schwarz inequality it holds that 
	\begin{equation}\label{eq:S_is_mart_inequ2}
	\begin{split}
	E\left[ \left( \int_0^T \Gamma_s^2 Z_s^2 d[M]_s \right)^{\frac12}  \right] 
	& \leq E\left[ \left( \sup_{t\in[0,T]} \Gamma_t^2 \right)^{\frac12} \left( \int_0^T Z_s^2 d[M]_s \right)^{\frac12}  \right] \\
	& \leq \left( E\left[ \sup_{t\in[0,T]} \Gamma_t^2 \right] \right)^{\frac12} \left( E\left[ \int_0^T Z_s^2 d[M]_s \right] \right)^{\frac12} .
	\end{split}
	\end{equation}
	Since $b$ is bounded and $\bm{\left(C_{[M]}\right)}$ holds, 
	$E\left[ \sup_{t\in[0,T]} \Gamma_t^2 \right]<\infty$.
	We also have by assumption that $E\left[ \int_0^T Z_s^2 d[M]_s \right]<\infty$. 
	Therefore, it follows from~\eqref{eq:S_is_mart_inequ2} and the Burkholder-Davis-Gundy inequality that $E\left[ \sup_{t\in[0,T]} |S_t| \right] < \infty$. Thus, $S$ is a martingale. A similar reasoning applies also to $\wt S$, 
	$U$ and~$\wt U$.
	
	Finally, the claims (i) and~(ii) are straightforward consequences of~\eqref{eq:repres_delta_Y_comp}.
\end{proof}

We now apply Proposition~\ref{propo:comparison} to obtain $0\leq Y\leq \frac12$ in the proof of Proposition~\ref{propo:existence_bsde_general_filtration}.
Observe that $(\wt{Y},\wt{Z},\wt M^\perp)=\left( \frac{1}{2},0,0 \right)$ is a solution of BSDE$(0,\frac12)$, which obviously satisfies $E\left[ [\wt M^\perp]_T\right]<\infty$ and $E\left[ \int_0^T \wt Z_s^2 d[M]_s \right] <\infty$. 
Moreover,
with $\ol f$ as defined in the proof of Proposition~\ref{propo:existence_bsde_general_filtration},
it holds 
\begin{equation*}
\ol{f}\left(s,\frac12\right) = \frac{-\rho_s^2}{2\left( 2\rho_s + \mu_s \right)} \leq 0, \quad s\in[0,T], 
\end{equation*}
and both BSDEs have the same terminal value $\frac12$. 
Therefore, Proposition~\ref{propo:comparison} applies and yields $Y\leq \wt Y = \frac12$. 

For the other bound, note that $(\wt{Y},\wt{Z},\wt M^\perp)=\left( 0,0,0 \right)$ is a solution of BSDE$(0,0)$ with $E\left[ [\wt M^\perp]_T\right]<\infty$ and $E\left[ \int_0^T \wt Z_s^2 d[M]_s \right] <\infty$. Since $\ol{f}(s,0)=0$ for all $s\in[0,T]$ and $Y_T = \frac12 \geq 0 = \wt Y_T$, it follows from Proposition~\ref{propo:comparison} that $Y\geq \wt Y=0$.

\begin{remark}
Notice that in the proof of Proposition~\ref{propo:comparison} we need
$\bm{\left(C_{[M]}\right)}$ and the Lipschitz continuity of $f$ only to show that $E\left[ \sup_{t\in[0,T]} \Gamma_t^2 \right]$ is finite. 
Replace these two conditions by the assumption 
that there exists a predictable stochastic process $R$ such that for all $y,y'\in\mathbb{R}$, $|f(\omega,s,y) - f(\omega,s,y')|\leq R_s(\omega) |y-y'|$ $\PdM$-a.e.    
and for all $c\in (0,\infty)$, $E\left[ \exp\left( c\int_0^T R_s d[M]_s \right) \right] <\infty$. 
Then, we still have that 
\begin{equation*}
	\begin{split}
		E\left[ \sup_{t\in[0,T]} \Gamma_t^2 \right] 
		& = E\left[ \sup_{t\in[0,T]} \exp\left( 2 \int_0^t b_s d[M]_s \right) \right] 
		\leq E\left[  \exp\left( 2 \int_0^T R_s d[M]_s \right) \right] 
		< \infty . 
	\end{split} 
\end{equation*}
Hence, the claim of Proposition~\ref{propo:comparison} also applies to the setting mentioned in Remark~\ref{rem:28032021a1}.
\end{remark}

\hyphenation{par-ti-ci-pants}

\bigskip\noindent

\textbf{Acknowledgement:}
We are grateful to the Mathematical Finance session participants in the Bernoulli-IMS One World Symposium and to seminar participants in
Gie\ss{}en, Berlin, Moscow and Leeds for insightful discussions.
We thank Alexander Schied and two anonymous referees for their constructive comments and suggestions that helped us improve the manuscript.

\bibliographystyle{abbrv}
\bibliography{literature}

\begin{thebibliography}{10}

\bibitem{ackermann2020optimal}
J.~Ackermann, T.~Kruse, and M.~Urusov.
\newblock Optimal trade execution in an order book model with stochastic
  liquidity parameters.
\newblock {\em Preprint, arXiv:2006.05843}, 2020.

\bibitem{alfonsi2014optimal}
A.~Alfonsi and J.~I. Acevedo.
\newblock Optimal execution and price manipulations in time-varying limit order
  books.
\newblock {\em Applied Mathematical Finance}, 21(3):201--237, 2014.

\bibitem{alfonsi2008constrained}
A.~Alfonsi, A.~Fruth, and A.~Schied.
\newblock Constrained portfolio liquidation in a limit order book model.
\newblock {\em Banach Center Publ}, 83:9--25, 2008.

\bibitem{alfonsi2010optimal}
A.~Alfonsi, A.~Fruth, and A.~Schied.
\newblock Optimal execution strategies in limit order books with general shape
  functions.
\newblock {\em Quantitative Finance}, 10(2):143--157, 2010.

\bibitem{almgren2012optimal}
R.~Almgren.
\newblock Optimal trading with stochastic liquidity and volatility.
\newblock {\em SIAM Journal on Financial Mathematics}, 3(1):163--181, 2012.

\bibitem{almgren2001optimal}
R.~Almgren and N.~Chriss.
\newblock Optimal execution of portfolio transactions.
\newblock {\em Journal of Risk}, 3:5--40, 2001.

\bibitem{ankirchner2020optimal}
S.~Ankirchner, A.~Fromm, T.~Kruse, and A.~Popier.
\newblock Optimal position targeting via decoupling fields.
\newblock {\em To appear in Annals of Applied Probability}, 2020.

\bibitem{ankirchner2014bsdes}
S.~Ankirchner, M.~Jeanblanc, and T.~Kruse.
\newblock {BSDE}s with singular terminal condition and a control problem with
  constraints.
\newblock {\em SIAM Journal on Control and Optimization}, 52(2):893--913, 2014.

\bibitem{ankirchner2015optimal}
S.~Ankirchner and T.~Kruse.
\newblock Optimal position targeting with stochastic linear-quadratic costs.
\newblock {\em Advances in mathematics of finance}, 104:9--24, 2015.

\bibitem{bank2014optimal}
P.~Bank and A.~Fruth.
\newblock Optimal order scheduling for deterministic liquidity patterns.
\newblock {\em SIAM Journal on Financial Mathematics}, 5(1):137--152, 2014.

\bibitem{bank2018linear}
P.~Bank and M.~Vo{\ss}.
\newblock Linear quadratic stochastic control problems with stochastic terminal
  constraint.
\newblock {\em SIAM Journal on Control and Optimization}, 56(2):672--699, 2018.

\bibitem{becherer2019stability}
D.~Becherer, T.~Bilarev, and P.~Frentrup.
\newblock Stability for gains from large investors’ strategies in
  {$M_1$/$J_1$} topologies.
\newblock {\em Bernoulli}, 25(2):1105--1140, 2019.

\bibitem{bertsimas1998optimal}
D.~Bertsimas and A.~W. Lo.
\newblock Optimal control of execution costs.
\newblock {\em Journal of Financial Markets}, 1(1):1--50, 1998.

\bibitem{carmona2019selffinancing}
R.~Carmona and K.~Webster.
\newblock The self-financing equation in limit order book markets.
\newblock {\em Finance Stoch.}, 23(3):729--759, 2019.

\bibitem{cheridito2014optimal}
P.~Cheridito and T.~Sepin.
\newblock Optimal trade execution under stochastic volatility and liquidity.
\newblock {\em Applied Mathematical Finance}, 21(4):342--362, 2014.

\bibitem{fruth2014optimal}
A.~Fruth, T.~Sch\"{o}neborn, and M.~Urusov.
\newblock Optimal trade execution and price manipulation in order books with
  time-varying liquidity.
\newblock {\em Math. Finance}, 24(4):651--695, 2014.

\bibitem{fruth2019optimal}
A.~Fruth, T.~Sch\"{o}neborn, and M.~Urusov.
\newblock Optimal trade execution in order books with stochastic liquidity.
\newblock {\em Math. Finance}, 29(2):507--541, 2019.

\bibitem{garleanu2016dynamic}
N.~G{\^a}rleanu and L.~H. Pedersen.
\newblock Dynamic portfolio choice with frictions.
\newblock {\em Journal of Economic Theory}, 165:487--516, 2016.

\bibitem{graewe2017optimal}
P.~Graewe and U.~Horst.
\newblock Optimal trade execution with instantaneous price impact and
  stochastic resilience.
\newblock {\em SIAM Journal on Control and Optimization}, 55(6):3707--3725,
  2017.

\bibitem{graewe2015non}
P.~Graewe, U.~Horst, and J.~Qiu.
\newblock A non-{M}arkovian liquidation problem and backward {SPDE}s with
  singular terminal conditions.
\newblock {\em SIAM Journal on Control and Optimization}, 53(2):690--711, 2015.

\bibitem{graewe2018smooth}
P.~Graewe, U.~Horst, and E.~S{\'e}r{\'e}.
\newblock Smooth solutions to portfolio liquidation problems under
  price-sensitive market impact.
\newblock {\em Stochastic Processes and their Applications}, 128(3):979--1006,
  2018.

\bibitem{HorstKivman2021}
U.~Horst and E.~Kivman.
\newblock Optimal trade execution under small market impact and portfolio
  liquidation with semimartingale strategies.
\newblock {\em Preprint}, 2021.

\bibitem{horst2016constrained}
U.~Horst, J.~Qiu, and Q.~Zhang.
\newblock A constrained control problem with degenerate coefficients and
  degenerate backward {SPDE}s with singular terminal condition.
\newblock {\em SIAM Journal on Control and Optimization}, 54(2):946--963, 2016.

\bibitem{horst2019multi}
U.~Horst and X.~Xia.
\newblock Multi-dimensional optimal trade execution under stochastic
  resilience.
\newblock {\em Finance and Stochastics}, 23(4):889--923, 2019.

\bibitem{jacodshiryaev}
J.~Jacod and A.~N. Shiryaev.
\newblock {\em Limit theorems for stochastic processes}.
\newblock A Series of Comprehensive Studies in Mathematics; 288. Springer,
  Berlin, 2nd edition, 2003.

\bibitem{karatzasshreve}
I.~Karatzas and S.~E. Shreve.
\newblock {\em Brownian motion and stochastic calculus}.
\newblock Graduate texts in mathematics; 113. Springer, New York, 2nd edition,
  1991.

\bibitem{kruse2016minimal}
T.~Kruse and A.~Popier.
\newblock Minimal supersolutions for {BSDE}s with singular terminal condition
  and application to optimal position targeting.
\newblock {\em Stochastic Processes and their Applications}, 126(9):2554--2592,
  2016.

\bibitem{lorenz2013drift}
C.~Lorenz and A.~Schied.
\newblock Drift dependence of optimal trade execution strategies under
  transient price impact.
\newblock {\em Finance and Stochastics}, 17(4):743--770, 2013.

\bibitem{morlais2009quadratic}
M.-A. Morlais.
\newblock Quadratic {BSDE}s driven by a continuous martingale and applications
  to the utility maximization problem.
\newblock {\em Finance and Stochastics}, 13(1):121--150, 2009.

\bibitem{obizhaeva2013optimal}
A.~A. Obizhaeva and J.~Wang.
\newblock Optimal trading strategy and supply/demand dynamics.
\newblock {\em Journal of Financial Markets}, 16:1--32, 2013.

\bibitem{papapantoleon2018nineyards}
A.~Papapantoleon, D.~Possamaï, and A.~Saplaouras.
\newblock Existence and uniqueness results for {BSDE} with jumps: the whole
  nine yards.
\newblock {\em Electron. J. Probab.}, 23:68 pp., 2018.

\bibitem{popier2019second}
A.~Popier and C.~Zhou.
\newblock Second-order {BSDE} under monotonicity condition and liquidation
  problem under uncertainty.
\newblock {\em The Annals of Applied Probability}, 29(3):1685--1739, 2019.

\bibitem{predoiu2011optimal}
S.~Predoiu, G.~Shaikhet, and S.~Shreve.
\newblock Optimal execution in a general one-sided limit-order book.
\newblock {\em SIAM Journal on Financial Mathematics}, 2(1):183--212, 2011.

\bibitem{revuzyor}
D.~Revuz and M.~Yor.
\newblock {\em Continuous martingales and {B}rownian motion}, volume 293 of
  {\em Grundlehren der Mathematischen Wissenschaften}.
\newblock Springer-Verlag, Berlin, third edition, 1999.

\bibitem{schied2013control}
A.~Schied.
\newblock A control problem with fuel constraint and {D}awson--{W}atanabe
  superprocesses.
\newblock {\em The Annals of Applied Probability}, 23(6):2472--2499, 2013.

\end{thebibliography}

\end{document}